\author{Erez Lapid}
\address{Department of Mathematics, Weizmann Institute of Science, Rehovot 76100 Israel}
\email{erez.m.lapid@gmail.com}
\author{Zhengyu Mao}
\address{Department of Mathematics and Computer Science, Rutgers University, Newark, NJ 07102, USA}
\email{zmao@rutgers.edu}
\title[Whittaker--Fourier coefficients on $\Mp_{n}$]{Whittaker--Fourier coefficients of cusp forms on $\Mp_{n}$: reduction to a local statement}
\date{\today}
\thanks{Authors partially supported by U.S.--Israel Binational Science Foundation Grant \# 057/2008}
\thanks{Second named author partially supported by NSF grants DMS 1000636 and 1400063 and by a fellowship from the Simons Foundation}
\keywords{Whittaker coefficients, metaplectic group, automorphic forms}
\subjclass[2010]{11F30, 11F70}
\newcommand{\A}{\mathbb{A}}                            
\newcommand{\C}{\mathbb{C}}                            
\newcommand{\R}{\mathbb{R}}                            
\newcommand{\Z}{\mathbb{Z}}                            
\newcommand{\AF}{\mathcal{A}}                          
\newcommand{\bs}{\backslash}
\newcommand{\Hom}{\operatorname{Hom}}
\newcommand{\vol}{\operatorname{vol}}
\newcommand{\cusp}{\operatorname{cusp}}
\newcommand{\temp}{\operatorname{temp}}
\newcommand{\mcusp}{\operatorname{mcusp}}
\newcommand{\dsum}{\oplus}
\newcommand{\iii}{\mathrm{i}}
\newcommand{\sgn}{\operatorname{sgn}}
\newcommand{\OO}{\mathcal{O}}                         
\newcommand{\eps}{\epsilon}
\newcommand{\K}{\mathbf{K}}                             
\newcommand{\stint}{\int^{st}}                         
\newcommand{\eisen}{\mathcal{E}}                       
\newcommand{\reseisen}{\eisen_{-k}}                    
\newcommand{\resM}{M_{-k}}                             
\newcommand{\resm}{m_{-k}}                             
\newcommand{\desc}{\mathcal{D}}
\newcommand{\des}{\mathcal{D}_\psi}                     
\newcommand{\desinv}{\mathcal{D}_{\psi^{-1}}}
\newcommand{\whitform}{A^{\psi}}                              
\newcommand{\whitformd}{A^{\psi^{-1}}}
\newcommand{\Mint}{A_e^{\psi}}                                
\newcommand{\Mintd}{A_e^{\psi^{-1}}}
\newcommand{\F}{\mathbb{F}}
\newcommand{\wgt}[1]{\nu(#1)}
\newcommand{\good}{good}
\renewcommand{\d}[1]{#1^{\vee}}
\newcommand{\alt}[1]{#1^{\wedge}}
\newcommand{\psiold}{\hat{\psi}}            
\newcommand{\animg}[1]{\widetilde{#1}}                        
\newcommand{\Mp}{\widetilde{\operatorname{Sp}}}
\newcommand{\Mat}{\operatorname{Mat}}
\newcommand{\GL}{\operatorname{GL}}
\newcommand{\Sp}{\operatorname{Sp}}
\newcommand{\SL}{\operatorname{SL}}
\newcommand{\SO}{\operatorname{SO}}
\newcommand{\sym}{\operatorname{sym}}
\newcommand{\Ind}{\operatorname{Ind}}                   
\newcommand{\res}{\operatorname{res}}                   
\newcommand{\Shal}{\mathfrak{S}}
\newcommand{\Levi}{M}
\newcommand{\GLnn}{{\mathbb M}}
\newcommand{\GLn}{{{\mathbb M}'}}
\newcommand{\Irr}{\operatorname{Irr}}                  
\newcommand{\rest}{\big|}                              
\newcommand{\Cusp}{\operatorname{Cusp}}                
\newcommand{\Mcusp}{\operatorname{MCusp}}              
\newcommand{\meta}{\operatorname{meta}}                
\newcommand{\shal}{\operatorname{Shal}}                
\newcommand{\gen}{\operatorname{gen}}                  
\newcommand{\sqr}{\operatorname{sqr}}                  
\newcommand{\PGL}{\operatorname{PGL}}
\newcommand{\FJc}{\operatorname{FJ}}                   
\newcommand{\FJ}{\operatorname{FJ}_{\psi_{N_\GLnn}}}   
\newcommand{\levi}{\varrho}                            
\newcommand{\toG}{\eta}                                
\newcommand{\toM}{\eta_{\GLnn}}                        
\newcommand{\toLevi}{\eta_{\Levi}}                     
\newcommand{\toMd}{\eta_{\GLnn}^\vee}                  
\newcommand{\toLevid}{\eta_{\Levi}^\vee}               
\newcommand{\toU}{\ell}
\newcommand{\toUbar}{\overline{\ell}}
\newcommand{\startran}[1]{\breve{#1}}             
\newcommand{\new}[1]{#1^\circ}                   
\newcommand{\swrz}{\mathcal{S}}
\newcommand{\modulus}{\delta}
\newcommand{\diag}{\operatorname{diag}}
\newcommand{\Heise}{\mathcal{H}}        
\newcommand{\whit}{\mathcal{W}}                          
\newcommand{\Whit}{\mathbb{W}}                           
\newcommand{\WhitM}{\Whit^{\psi_{N_\GLnn}}}                  
\newcommand{\WhitMd}{\Whit^{\psi_{N_\GLnn}^{-1}}}
\newcommand{\WhitML}{\Whit^{\psi_{N_\Levi}}}                  
\newcommand{\WhitMLd}{\Whit^{\psi_{N_\Levi}^{-1}}}
\newcommand{\WhitG}{\Whit^{\psi_{\tilde N}}}                  
\newcommand{\WhitGd}{\Whit^{\psi_{\tilde N}^{-1}}}
\newcommand{\symspace}{\mathfrak{s}}                 
\newcommand{\psiweil}{\psi_{\circ}}
\newcommand{\fel}{\epsilon_\pi}           
\newcommand{\one}{\epsilon}        
\newcommand{\wnn}{{w_0^\GLnn}}
\newcommand{\tr}{\operatorname{tr}}                        
\newcommand{\Hei}[1]{{#1}_{\Heise}}
\newcommand{\minors}{\mu}                    
\newcommand{\sprod}[2]{\left\langle#1,#2\right\rangle}
\newcommand{\abs}[1]{\left|{#1}\right|}
\newcommand{\norm}[1]{\lVert#1\rVert}
\newcommand{\sm}[4]{\left(\begin{smallmatrix}{#1}&{#2}\\{#3}&{#4}\end{smallmatrix}\right)}
\newcommand{\weil}{\omega}
\newcommand{\we}{\weil_{\psi}}                          
\newcommand{\weinv}{\weil_{\psi^{-1}}}                          
\newcommand{\wev}{\weil_{\psi_{N_\GLnn}}}
\newcommand{\wevinv}{\weil_{\psi_{N_\GLnn}^{-1}}}
\newcommand{\weilfctr}{\gamma_\psi}                       
\newcommand{\der}{\operatorname{der}}                     
\newcommand{\spclt}{\mathfrak{t}}
\newcommand{\expo}{\operatorname{e}}
\newtheorem{theorem}{Theorem}[section]
\newtheorem{lemma}[theorem]{Lemma}
\newtheorem{proposition}[theorem]{Proposition}
\newtheorem{remark}[theorem]{Remark}
\newtheorem{conjecture}[theorem]{Conjecture}
\newtheorem{definition}[theorem]{Definition}
\newtheorem{claim}[theorem]{``Claim''}
\numberwithin{equation}{section}
\begin{document}

\begin{abstract}
In a previous paper we formulated an analogue of the Ichino--Ikeda conjectures for Whittaker--Fourier coefficients of cusp forms
on quasi-split groups, as well as the metaplectic group of arbitrary rank.
In this paper we reduce the conjecture for the metaplectic group to a local conjectural identity.
We motivate this conjecture by giving a heuristic argument for the case $\widetilde{\SL}_2$.
In a subsequent paper we will prove the local identity in the $p$-adic case.
\end{abstract}

\maketitle

\setcounter{tocdepth}{1}
\tableofcontents

\section{Introduction}

In \cite{LMao5} we studied the Whittaker-Fourier coefficients of cusp forms on adelic quotients
of quasi-split groups and formulated a conjecture relating them to the Petersson inner product.
In general, the formulation relies on a strong form of Arthur's conjecture\footnote{In the function field case
a dramatic progress towards Arthur's conjecture in its strong form was made recently by V.~Lafforgue \cite{1209.5352}}
and is closely related
to recent conjectures of Sakellaridis--Venkatesh \cite{1203.0039} and slightly older ones of Ichino--Ikeda \cite{MR2585578},
both of which go back to the seminal work of Waldspurger \cite{MR783511, MR646366}.
In the case of (quasi-split) classical groups, as well as the metaplectic group (i.e.,
the metaplectic double cover of the symplectic group) we formulated the conjecture without appealing to Arthur's
conjecture (or work) by using the descent construction of Ginzburg--Rallis--Soudry \cite{MR2848523}
and the functorial transfer of generic representations of classical groups by Cogdell--Kim--Piatetski-Shapiro--Shahidi
\cite{MR1863734, MR2075885, MR2767514}.
We will recall the conjecture in the metaplectic case below. (See \cite{LMao5} for further discussion.)

The purpose of this paper is to reduce the global conjecture in the metaplectic case to a local conjectural identity.
The reduction is achieved, not surprisingly, by the descent method.
It is explained in \S\ref{sec: local to global} after we develop the prerequisites.
The analysis explains the power of two factor which appears in the formulation of the conjecture.
In a subsequent paper \cite{1404.2905} we prove the local identity in the $p$-adic case.
In yet another paper joint with Ichino \cite{1404.2909} we will directly relate this local identity in the square-integrable (generic)
case to the formal degree conjecture of Hiraga--Ichino--Ikeda \cite{MR2350057}.
Thereby, using Arthur's work and the work of Gan--Ichino \cite{MR3166215} we will establish the formal degree conjecture
for the metaplectic group as well as for the split odd orthogonal group.
We will also deduce the local identity in the archimedean case (for square-integrable representations) since the formal degree
conjecture is known in the archimedean case by the seminal work of Harish-Chandra \cite{MR0439994}.
In the current paper we will be content with a \emph{purely formal} argument (i.e., without worrying about convergence issues) for the case $n=1$
which is already quite non-trivial (see \S\ref{sec: n=1}).

Let us recall the conjecture of \cite{LMao5} in the case of the metaplectic group.
Let $F$ be a number field and $\A$ its adele ring.
Let $\Mp_n(\A)$ be the metaplectic double cover of $\Sp_n(\A)$ with the standard co-cycle.
We view $\Sp_n(F)$ as a subgroup of $\Mp_n(\A)$.
(All automorphic forms on $\Mp_n(\A)$ are implicity assumed to be genuine.)
Let $N'$ be the group of the upper unitriangular matrices in $\Sp_n$. We view $N'(\A)$ as a subgroup of $\Mp_n(\A)$.
(The co-cycle is trivial on $N'(\A)\times\Sp_n(\A)$.)
Fix a non-degenerate character $\psi_{\tilde N}$ on $N'(\A)$, trivial on $N'(F)$.
For a cusp form $\tilde\varphi$ of $\Sp_n(F)\bs\Mp_n(\A)$ we consider the Whittaker--Fourier coefficient
\[
\tilde\whit(\tilde\varphi)=\tilde\whit^{\psi_{\tilde N}}(\tilde\varphi):=
(\vol(N'(F)\bs N'(\A)))^{-1}\int_{N'(F)\bs N'(\A)}\tilde\varphi(n)\psi_{\tilde N}(n)^{-1}\ dn.
\]
If $\tilde\varphi^\vee$ is another cusp form on $\Sp_n(F)\bs\Mp_n(\A)$ we also set
\[
(\tilde\varphi,\tilde\varphi^\vee)_{\Sp_n(F)\bs\Sp_n(\A)}=(\vol(\Sp_n(F)\bs\Sp_n(\A)))^{-1}\int_{\Sp_n(F)\bs\Sp_n(\A)}\tilde\varphi(g)\tilde\varphi^\vee(g)\ dg.
\]

Given a finite set of places $S$ we defined in \cite{LMao5} a regularized integral
\[
\stint_{N'(F_S)}f(n)\ dn
\]
for a suitable class of smooth functions $f$ on $N'(F_S)$. If $S$ consists only of non-archimedean places then
\[
\stint_{N'(F_S)}f(n)\ dn=\int_{N_1'}f(n)\ dn
\]
for any sufficiently large compact open subgroup $N_1'$ of $N'(F_S)$.
(In the archimedean case an ad-hoc definition is given.)

The input for the descent construction is an automorphic representation $\pi$ of $\GL_{2n}(\A)$
which is the isobaric sum $\pi_1\boxplus\dots\boxplus\pi_k$ of pairwise inequivalent irreducible cuspidal representations $\pi_i$ of
$\GL_{2n_i}(\A)$, $i=1,\dots,k$ (with $n_1+\dots+n_k=n$) such that $L^S(\frac12,\pi_i)\ne0$ and $L^S(s,\pi_i,\wedge^2)$ has a pole (necessarily simple)
at $s=1$ for all $i$. Here $L^S(s,\pi_i)$ and $L^S(s,\pi_i,\wedge^2)$ are the standard and exterior square (partial) $L$-functions, respectively.
To any such $\pi$ one constructs a $\psi_{\tilde N}$-generic representation $\tilde\pi$ of $\Mp_n(\A)$, which is called the $\psi_{\tilde N}$-descent of $\pi$.
(See \S\ref{sec: descent} for more details.)

\begin{conjecture}(\cite[Conjecture 1.3]{LMao5}) \label{conj: metplectic global}
Assume that $\tilde\pi$ is the $\psi_{\tilde N}$-descent of $\pi$ as above.
Then for any $\tilde\varphi\in\tilde\pi$ and $\d{\tilde\varphi}\in\d{\tilde\pi}$ and for any sufficiently large finite set $S$ of places of $F$ we have
\begin{multline} \label{eq: globalidentity}
\tilde\whit^{\psi_{\tilde N}}(\tilde\varphi)\tilde\whit^{\psi_{\tilde N}^{-1}}(\d{\tilde\varphi})=2^{-k}(\prod_{i=1}^n\zeta_F^S(2i))\frac{L^S(\frac12,\pi)}{L^S(1,\pi,\sym^2)}\times\\
(\vol(N'(\OO_S)\bs N'(F_S)))^{-1}\stint_{N'(F_S)}(\tilde\pi(n)\tilde\varphi,\d{\tilde\varphi})_{\Sp_n(F)\bs\Sp_n(\A)}\psi_{\tilde N}(n)^{-1}\ dn.
\end{multline}
Here $\zeta_F^S(s)$ is the partial Dedekind zeta function and $\OO_S$ is the ring of $S$-integers of $F$
and $L^S(s,\pi,\sym^2)$ is the symmetric square partial $L$-function of $\pi$.
\end{conjecture}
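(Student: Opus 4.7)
The plan is to reduce the global identity \eqref{eq: globalidentity} to a purely local statement at each place in $S$ by unfolding both sides via the descent construction. I would begin by realizing any $\tilde\varphi\in\tilde\pi$ (and analogously $\d{\tilde\varphi}\in\d{\tilde\pi}$) as a Fourier--Jacobi coefficient $\FJ(\eisen)$ of a residue $\eisen$ of an Eisenstein series on a larger symplectic group, induced from the Siegel parabolic with inducing datum built from $\pi$. With decomposable input data this realization is Eulerian, which is what makes the subsequent comparison feasible.

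For the left-hand side, I would unfold $\tilde\whit^{\psi_{\tilde N}}(\FJ(\eisen))$ by commuting the Whittaker integration through the Fourier--Jacobi projection and into the Eisenstein series, then collapsing the sum by standard unipotent manipulations. At unramified places the Casselman--Shalika formula identifies the resulting local factors, while the global $L$-factors of $\pi$ enter through the constant term of $\eisen$, producing exactly the ratio $L^S(\tfrac12,\pi)/L^S(1,\pi,\sym^2)$ together with the factors $\prod_{i=1}^{n}\zeta_F^S(2i)$. The constant $2^{-k}$ should also appear at this stage, arising from the iterated residue of the Eisenstein series induced from $\pi_1\boxplus\dots\boxplus\pi_k$ combined with the multiplicity of the descent inside the residual spectrum.

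For the right-hand side, the matrix coefficient $(\tilde\pi(n)\tilde\varphi,\d{\tilde\varphi})_{\Sp_n(F)\bs\Sp_n(\A)}$ can likewise be rewritten through the descent, turning it into an automorphic pairing of $\eisen$ against its dual tested by a Fourier--Jacobi kernel. After exchanging the regularized stable integral over $N'(F_S)$ with the global integration, the same Rankin--Selberg-type unfolding yields an Eulerian expression whose unramified Euler factors match those of the left-hand side. What remains is a comparison of ramified local factors at each $v\in S$, which reduces to a local identity between a Whittaker-type pairing coming from the descent realization of $\tilde\pi_v$ and a Bessel-type distribution built from the regularized integral of matrix coefficients of $\tilde\pi_v$. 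This is precisely the local conjecture to be formulated in \S\ref{sec: local to global} and proved in the $p$-adic case in \cite{1404.2905}.

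The principal obstacles I expect are: (i) justifying the interchange of the regularized integral $\stint$ with the various global integrations, particularly at archimedean places where $\stint$ is defined only ad hoc; (ii) tracking the Weil-representation normalizations $\weilfctr$ and the metaplectic cocycle consistently through the unfolding, so that the Eulerian factorization really holds in the double cover; and (iii) pinning down the numerical constants, above all the factor $2^{-k}$, by analyzing the precise residual contribution of the Eisenstein series. The heuristic treatment of $n=1$ in \S\ref{sec: n=1} is meant to serve as a consistency check for exactly these normalizations before undertaking the general case.
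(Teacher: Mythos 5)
Your proposal matches the paper's reduction strategy almost step by step: realize $\tilde\varphi$ as $\FJ(\reseisen\varphi,\Phi)$ for a residual Eisenstein series on $\Sp_{2n}$ induced from the Siegel parabolic with datum $\pi$; unfold the Whittaker coefficient of the Fourier--Jacobi descent via Ginzburg--Rallis--Soudry (the paper's Theorem \ref{thm: FCdescent}, reformulating \cite[Theorem 9.7]{MR2848523}); unfold the Petersson pairing via the $\GL_{2n}\times\Mp_n$ Rankin--Selberg integral (Theorem \ref{thm: RS}, from \cite[Theorem 10.4]{MR2848523}); and compare, with the $2^{-k}$ coming from the order-$k$ Laurent leading term $\resm^S(\pi)$ at $s=\tfrac12$, the $\Delta_{\Sp_n}^S(1)=\prod_{i=1}^n\zeta_F^S(2i)$ from the Tamagawa normalization, and the $L$-ratio from $L^S(s,\pi\otimes\pi)=L^S(s,\pi,\sym^2)L^S(s,\pi,\wedge^2)$. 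Note only that neither you nor the paper actually \emph{prove} Conjecture \ref{conj: metplectic global}: the paper establishes Theorem \ref{thm: local to global}, which proves the identity up to the local constants $\prod_{v\in S}c_{\pi_v}$, and the remaining Conjecture \ref{conj: goodeps} ($c_{\pi_v}=\epsilon(\tfrac12,\pi_v,\psi_v)$) is precisely the local statement you correctly identify as being deferred to \cite{1404.2905}.
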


We recall that the image of the $\psi_{\tilde N}$-descent map consists of the $\psi_{\tilde N}$-generic representations whose
$\psi$-theta lift to $\SO(2n-1)$ vanishes where $\psi$ is determined by $\psi_{\tilde N}$. (See \cite[\S11]{MR2848523} for more details.)
In the case $n=1$, this excludes the so-called exceptional representations.
As was pointed out before, the case $n=1$ goes back to the classical result of Waldspurger on the Fourier coefficients
of half-integral weight modular forms \cite{MR646366}. His result has been revisited extensively --
see \cite{MR894322, MR1244668, MR629468, MR1233447, MR783554, MR1404335, MR2059949, MR2669637, 1308.2353}.
We emphasize that our approach for the case $n=1$ is different from that of Waldpusrger
(who uses the Shimura correspondence) as well as from the relative trace formula approach taken by other authors
(starting with Jacquet \cite{MR983610}). It is likely that the relative trace formula approach carries over to higher rank --
see \cite{MR2656089}. We also mention in this context the recent results of Wei Zhang on the Gan--Gross--Prasad conjecture for unitary groups
and its refinement by Ichino--Ikeda and N. Harris \cite{WeiZhang, MR3164988}.

Let us describe the contents of the paper in more detail.
After setting up some notation in \S\ref{sec: notation} we introduce and study
in \S\ref{sec: metatype} the class $\Irr_{\gen,\meta}\GL_{2n}$ of generic representations of $\GL_{2n}$ over a local field
``of metaplectic type''.
The global analogue of this class is related to a Rankin--Selberg type integral studied by Bump--Friedberg and Friedberg-Jacquet \cite{MR1159108, MR1241129}.
These representations are the input for the descent construction, both locally and globally.

In \S\ref{sec: local conjecture} we introduce, following Ginzburg--Rallis--Soudry, the local Shimura type integrals for
$GL_{2n}\times\Mp_n$ \cite{MR1675971} and the local descent studied in \cite{MR1671452, MR1954940}.

In \S\ref{sec: technical} and \S\ref{sec: local to global} we carry out the reduction to a local statement by piecing together ingredients of the descent constructions.
This is our main result.
More precisely, we show that the relation \eqref{eq: globalidentity} holds up to a constant $\prod_{v\in S}c_{\pi_v}$
which depends only on the local representations $\pi_v$, $v\in S$. (Theorem \ref{thm: local to global}.)
The local constants $c_{\pi_v}$ are certain factors of proportionality
which are explicated in \eqref{eq: main1}.
The main local conjecture (Conjecture \ref{conj: goodeps} which would imply Conjecture \ref{conj: metplectic global}) is that the constants
$c_{\pi_v}$ are equal to the standard epsilon factors $\eps(\frac12,\pi_v,\psi_v)$.

We end the paper by giving a heuristic argument for the local conjecture in the case $n=1$, which is related to the classical Shimura correspondence (\S\ref{sec: n=1}).
This case is already nontrivial and provides many of the ingredients which go into the general case.
It also demonstrates some of the technical difficulties involved in giving a rigorous argument.

The main result of the paper serves as a model for other quasi-split classical groups.
We hope to deal with these cases in the future.

\subsection{Acknowledgement}
The authors would like to thank Moshe Baruch, Joseph Bernstein, Wee Teck Gan, Marcela Hanzer, Atsushi Ichino, Herv\'e Jacquet, Dihua Jiang, Omer Offen,
Ravi Raghunathan, Yiannis Sakellaridis, Freydoon Shahidi, David Soudry, Marko Tadi\'c and Akshay Venkatesh for helpful discussions and suggestions.
We thank Joachim Schwermer and the Erwin Schr\"odinger Institute in Vienna for their support and for providing excellent working conditions
for collaboration.

\section{Notation and preliminaries} \label{sec: notation}
For the convenience of the reader we introduce in this section the most common notation
that will be used throughout.

We fix a positive integer $n$.
(In the body of the paper, we sometimes use $n$ also as a running variable.
We hope that this does not cause any confusion.)
The letters $i$, $j$, $k$, $l$, $m$ will denote auxiliary positive integers.

Let $F$ be a local field of characteristic $0$.

\subsection{Groups, homomorphisms and group elements} \label{sec: elements}

All algebraic groups are defined over $F$.
By abuse of notation we often write $X=X(F)$ for any variety over $F$.

\begin{itemize}

\item $I_m$ is the identity matrix in $\GL_m$, $w_m$ is the $m\times m$-matrix with ones on
the non-principal diagonal and zeros elsewhere; $J_m:=\sm{}{w_m}{-w_m}{}\in\GL_{2m}$.

\item For any group $Q$, $Z_Q$ is the center of $Q$; $e$ is the identity element of $Q$.
We denote the modulus function of $Q$ by $\modulus_Q$.

\item $\GLnn=\GL_{2n}$, $\GLn=\GL_n$.

\item $G=\Sp_{2n}=\{g\in\GL_{4n}:\, g^tJ_{2n}g=J_{2n}\}$ where $g^t$ is the transpose of $g$.

\item $G'=\Sp_n=\{g\in\GL_{2n}:\,g^tJ_ng=J_n\}$.

\item $\tilde G=\Mp_n$ is the metaplectic group, i.e., the two-fold cover of $G'$.
We write elements of $\tilde G$ as pairs $(g,\epsilon)$, $g\in G$, $\epsilon=\pm1$
where the multiplication is given by Rao's cocycle. (Cf. \cite{MR2848523}.)

\item $K$ is the standard maximal compact subgroup of $G$; similarly for $K'$.
When $F$ is $p$-adic, and $p\ne2$ we view $K'$ as a subgroup of $\tilde G$.

\item $P=\Levi\ltimes U$ is the Siegel parabolic subgroup of $G$, with its standard Levi decomposition.

\item $\bar P=P^t$ is the opposite parabolic of $P$, with unipotent radical $\bar U=U^t$.

\item $N$ (resp., $N_\GLnn$) is the standard maximal unipotent subgroup of $G$ (resp., $\GLnn$) consisting of upper unitriangular matrices;
$T$ (resp., $T_\GLnn$) is the maximal torus of $G$ (resp., $\GLnn$) consisting of diagonal matrices;
$B=T\ltimes N$ is the Borel subgroup of $G$.

\item $G'$ is embedded as a subgroup of $G$ via $g\mapsto\toG(g)=\diag(I_n,g,I_n)$.

\item The Siegel parabolic subgroup $P'$ of $G'$ satisfies $\toG(P')=P\cap \toG(G')$.
Similarly define $\Levi'$, $U'$, $N'$, $T'$ and $B'$
so that $P'=\Levi'\ltimes U'$ and $B'=T'\ltimes N'$.

\item For a subgroup $X$ of $G$, $X_\Levi$ denotes $X\cap\Levi$. In particular $T=T_\Levi$
and $N_\Levi=\levi(N_\GLnn)$. Similarly $X'_{\Levi'}=X'\cap \Levi'$ for a subgroup $X'$ of $G'$.

\item $g\mapsto g^*$ is the outer automorphism of $\GL_m$ given by $g^*=w_m^{-1}\,(g^t)^{-1} w_m$.

\item We use the isomorphism $\levi(g)=\diag(g,g^*)$ to identify $\GLnn$ with $\Levi$.
Similarly for $\levi':\GLn\rightarrow\Levi'$.

\item For any subgroup $X\subset G$, $X_\GLnn$ denotes the subgroup $\levi^{-1}(X_\Levi)$ of $\GLnn$. Similarly,
for $X'_{\GLn}$ if $X'\subset G'$.

\item We use the embeddings $\toM(g)=\diag(g,I_n)$ and $\toMd(g)=\diag(I_n,g)$ to identify $\GLn$ with subgroups of $\GLnn$.
We also set $\toLevi=\levi\circ\toM$ and $\toLevid=\levi\circ\toMd=\toG\circ\levi'$.

\item When $g\in G'$, we write $\animg{g}=(g,1)\in\tilde G$. (Of course, $g\mapsto\animg{g}$ is not a group homomorphism.)

\item $\tilde N$ is the inverse image of $N'$ under the canonical projection $\tilde G\rightarrow G'$.
We will identify $N'$ with a subgroup of $\tilde N$ via $n\mapsto\animg{n}$.

\item $\xi_m=(0,\ldots,0,1)\in F^m$.

\item $E=\operatorname{diag}[(-1)^{i-1}]\in \GLnn$. $H$ is the centralizer of $\levi(E)$ in $G$. It is isomorphic to $\Sp_n\times\Sp_n$.
$H_\GLnn$ is then the centralizer of $E$ in $\GLnn$. It is isomorphic to $\GLn\times\GLn$.

\item $\Mat_{l,m}$ is the vector space of $l\times m$ matrices over $F$.

\item $x\mapsto \startran{x}$ is the twisted transpose map on $\Mat_{m,m}$ given by
$\startran{x}=w_m x^t w_m$.

\item $\symspace_m=\{x\in\Mat_{m,m}:\startran{x}=x\}$.

\item $\toU:\symspace_{2n}\rightarrow U$ is the isomorphism given by $\toU(x)=\sm{I_{2n}}{x}{}{I_{2n}}$.
Similarly $\toUbar(x)=\sm{I_{2n}}{}x{I_{2n}}$ is the isomorphism from $\symspace_{2n}$ to $\bar U$.

\item $\toU_\GLnn:\Mat_{n,n}\rightarrow N_\GLnn$ is the homomorphism given by $\toU_\GLnn(x)=\sm{I_{n}}{x}{}{I_{n}}$.

\item $\one_{i,j}\in \Mat_{l,m}$ is the matrix with one at the $(i,j)$-entry and zeros elsewhere.

\item $\one^{\symspace_m}_{i,j}=\one_{i,j}+\startran{\one_{i,j}}\in \symspace_m$ when $i+j\not=m+1$ and
$\one^{\symspace_m}_{i,m+1-i}=\one_{i,m+1-i}$.

\item $w_U=\sm{}{I_{2n}}{-I_{2n}}{}\in G$ represents the longest $\Levi$-reduced Weyl element of $G$.

\item $w'_{U'}=\sm{}{I_n}{-I_n}{}\in G'$ represents the longest $\Levi'$-reduced Weyl element of $G'$.

\item $\wnn=w_{2n}\in \GLnn$ represents the longest Weyl element of $\GLnn$.

\item $\gamma=w_U\toG(w'_{U'})^{-1}=\left(\begin{smallmatrix}&I_n&&\\&&&I_n\\-I_n&&&\\&&I_n&\end{smallmatrix}\right)\in G$.

\item $V$ (resp., $V_0$) is the unipotent radical in $G$ of the standard parabolic subgroup with Levi
$\GL_1^n\times\Sp_n$ (resp., $\GL_1^{n-1}\times\Sp_{n+1}$).
Thus $N=V\rtimes\toG(N')$, $V_0$ is normal in $V$ and $V/V_0$ is isomorphic to the Heisenberg group
of dimension $2n+1$.

\item $V_\gamma=V\cap\gamma^{-1}N\gamma=\toG(w'_{U'})(N_\Levi\cap V)\toG(w'_{U'})^{-1}=\toLevi(N'_\GLn)\ltimes\{\toU(\sm{x}{}{}{\startran{x}}):x\in\Mat_{n,n}\}$.

\end{itemize}

\subsection{Characters} \label{sec: characters}
We fix a non-degenerate character $\psi_{N_\GLnn}$ of $N_\GLnn$.
This character will determine characters of several other unipotent group, as well as of $F$, as follows:
\begin{itemize}
\item $\psiweil$ is the non-trivial character of $F$ given by $\psiweil(x)=\psi_{N_\GLnn}(I_{2n}+x\one_{n,n+1})$.

\item $\psi_{N_\Levi}$ is the non-degenerate character of $N_\Levi$ such that $\psi_{N_\Levi}(\levi(u))=\psi_{N_\GLnn}(u)$.

\item $\psi_{N'_{\Levi'}}$ is the non-degenerate character of $N'_{\Levi'}$ given by
$\psi_{N'_{\Levi'}}(\levi'(u))=\psi_{N_\GLnn}(\toM(u'))$ for $u'\in N'_\GLn$.
Thus, $\psi_{N'_{\Levi'}}(n)=\psi_{N_\Levi}(\gamma \toG(n)\gamma^{-1})$.

\item $\psi_{U'}$ is the character on $U'$ given by $\psi_{U'}(u)=\psiweil(\frac12u_{n,n+1})^{-1}$.

\item $\psi_{\tilde N}$ is the genuine character of $\tilde N$ whose restriction to $N'$ is the non-degenerate character
\[
\psi_{\tilde N}(nu)=\psi_{N'_{\Levi'}}(n)\psi_{U'}(u),\,\,n\in N'_{\Levi'},\,u\in U'.
\]

\item $\psi_N$ is the \emph{degenerate} character on $N$ given by
$\psi_N(nu)=\psi_{N_\Levi}(n)$ for any $n\in N_\Levi$ and $u\in U$.

\item $\psi_V$ is the character of $V$, trivial on $V\cap U$ and $\levi(\toU_\GLnn(\Mat_{n,n}))$, given by
$\psi_V(\toLevi(n))=\psi_{N_\Levi}(\toLevid(n^*))$ for $n\in N'_{\GLn}$.

\end{itemize}

For convenience, we will fix a non-trivial character $\psi$ of $F$ and set $\psi_a=\psi(a\cdot)$. Let
\[
\psi_{N_\GLnn}(u)=\psi(u_{1,2}+\dots+u_{2n-1,2n}).
\]
Thus $\psiweil=\psi$. This choice is different from the conventions of \cite{MR2848523} (which is in turn different from
the conventions of \cite{MR1722953, MR1671452}).
This is a minor issue which will be addressed in \S\ref{sec: local to global}. (See Remark~\ref{rem: diffgrs}).
With this choice of $\psi_{N_\GLnn}$ we have
\begin{eqnarray*}
\psi_{N'_{\Levi'}}(\levi'(u'))&=&\psi(u'_{1,2}+\dots+u'_{n-1,n}), \ \ u'\in N'_{\GLn}\\
\psi_V(v)&=&\psi(v_{1,2}+\dots+v_{n-1,n})^{-1}, \ \ v\in V\cap\Levi.
\end{eqnarray*}

\subsection{Measures}\label{sec: chevalley}

We take the self-dual Haar measure on $F$ with respect to $\psi$.
We use the following convention for Haar measures for algebraic subgroups of $G$, (we consider $G'$, $\GLnn$, $\GLn$ as
subgroups of $G$ through the embeddings $\toG$, $\levi$ and $\toG\circ\levi'$).

When $F$ is a $p$-adic field, let $\OO$ be its ring of integers.
The Lie algebra $\mathfrak{M}$ of $\GL_{4n}$ consists of the $4n\times 4n$-matrices $X$ over $F$.
Let $\mathfrak{M}_{\OO}$ be the lattice of integral matrices in $\mathfrak{M}$.
For any algebraic subgroup $Q$ of $\GL_{4n}$ defined over $F$
(e.g., an algebraic subgroup of $G$)
let $\mathfrak{q}\subset\mathfrak{M}$ be the Lie algebra of $Q$.
The lattice $\mathfrak{q}\cap \mathfrak{M}_{\OO}$ of $\mathfrak{q}$ gives rise to a gauge form of $Q$
(determined up to multiplication by an element of $\OO^*$) and we use it to define a Haar measure on $Q$ by the recipe of \cite{MR0217077}.

When $F=\R$, the measures are fixed similarly, except that we use $\mathfrak{M}_{\Z}$ in place of $\mathfrak{M}_{\OO}$.
When $F=\C$ we view the group as a group over $\R$ by restriction of scalars and apply the above convention.

\subsection{Weil representation}
Let $W$ be a symplectic space over $F$ with symplectic form $\sprod{\cdot}{\cdot}$.
Let $\Heise=\Heise_W$ be the Heisenberg group of $(W,\sprod{\cdot}{\cdot})$.
Recall that $\Heise_W=W\oplus F$ with the product rule
\[
(x,t)\cdot (y,z)=(x+y,t+z+\frac12\sprod xy).
\]
Fix a polarization $W=W_+\dsum W_-$. The group $\Sp(W)$ acts on the right on $W$.
We write a typical element of $\Sp(W)$ as
$\sm ABCD$ where $A\in \Hom(W_+,W_+)$, $B\in \Hom(W_+,W_-)$, $C\in \Hom(W_-,W_+)$ and $D\in \Hom (W_-,W_-)$.
Let $\Mp(W)$ be the metaplectic two-fold cover of $\Sp(W)$ with respect to the Rao cocycle determined
by the splitting.
Consider the Weil representation $\we$ of the group $\Heise_W\rtimes\Mp(W)$ on $\swrz(W_+)$.
Explicitly, for any $\Phi\in\swrz(W_+)$ and $X\in W_+$ we have
\begin{subequations}
\begin{align}
\label{eq: weilH1} \we(a,0)\Phi(X)&=\Phi(X+a), \ \ a\in W_+,\\
\label{eq: weilH2} \we(b,0)\Phi(X)&=\psi(\sprod Xb)\Phi(X),\ \ b\in W_-,\\
\label{eq: weilH3} \we(0,t)\Phi(X)&=\psi(t)\Phi(X), \ \ t\in F,\\
\we(\sm g00{g^*},\epsilon)\Phi(X)&=\epsilon\weilfctr(\det g)\abs{\det g}^{\frac12}\Phi(X g), \ \ g\in\GL(W_+),\\
\we(\sm IB0I,\epsilon)\Phi(X)&=\epsilon\psi(\frac12\sprod{X}{XB})\Phi(X),\ \ B\in\Hom(W_+,W_-)\text{ self-dual},
\end{align}
\end{subequations}
where $\weilfctr$ is Weil's factor.\footnote{Strictly speaking, $\weilfctr(a)=\gamma(\psi_a)/\gamma(\psi)$ where
$\gamma(\psi)$ is Weil's index, which is an eighth root of unity.
We caution that the $\gamma_\psi$ in \cite[(1.4)]{MR2848523} is really $\gamma_{\psi_{\frac12}}$ in our notation.}

We now take $W=F^{2n}$ with the standard symplectic form
\[
\sprod{(x_1,\dots,x_{2n})}{(y_1,\dots,y_{2n})}=\sum_{i=1}^nx_iy_{2n+1-i}-\sum_{i=1}^ny_ix_{2n+1-i}
\]
and the standard polarization $W_+=\{(x_1,\dots,x_n,0,\dots,0)\}$, $W_-=\{(0,\dots,0,y_1,\dots,y_n)\}$.
(We identify $W_+$ and $W_-$ with $F^n$.)
The corresponding Heisenberg group is isomorphic to the quotient $V/V_0$ via $v\mapsto\Hei{v}:=((v_{n,n+j})_{j=1,\dots,2n},\frac12v_{n,3n+1})$;
(recall that $V$ and $V_0$ are the unipotent groups defined in \S\ref{sec: elements}.)

For $X=(x_1,\dots,x_n),X'=(x_1',\dots,x_n')\in F^n$ define
\[
\sprod X{X'}'=x_1x'_n+\dots+x_nx'_1.
\]
For $\Phi\in\swrz(F^n)$ define
\[
\hat\Phi(X)=\int_{F^n}\Phi(X')\psi(\sprod{X}{X'}')\ dX'.
\]

Then, the Weil representation is realized on $\swrz(F^n)$ as follows.
\begin{subequations}
\begin{align}
\label{eq: weil1} \we(\animg{\levi'(h)})\Phi(X)&=\abs{\det(h)}^{\frac{1}{2}}
\beta_{\psi}(\levi'(h))\Phi(Xh),\ \ h\in\GLn,\\
\label{eq: weil2} \we(\animg{w'_{U'}})\Phi(X)&=\beta_{\psi}(w'_{U'})\hat\Phi(X),\\
\label{eq: weil3} \we(\animg{\sm1B{}1})\Phi(X)&=\psi(\frac12\sprod{X}{XB}')\Phi(X),\ \ \ B\in\symspace_n.
\end{align}
\end{subequations}
Here $\beta_\psi(g)$, $g\in G'$ are certain roots of unity.

Starting with $\psi_{N_\GLnn}$, and with $\psiweil$ and $\psi_V$ as above,
we extend $\weil_{\psiweil}$ to a representation $\wev$ of $V\rtimes\tilde G$ by setting
\begin{equation}\label{eq: weilext}
\wev(v \animg g)\Phi=\psi_V(v)\weil_{\psiweil}(\Hei{v})(\weil_{\psiweil}(\animg g)\Phi),\,\,v\in V,\,\,g\in G'.
\end{equation}
Then for any $g\in G'$, $v\in V$ we have
\begin{equation}\label{eq: weilext2}
\wev((\toG(g)v\toG(g)^{-1})\animg g)\Phi=\wev(\animg g)(\wev(v)\Phi).
\end{equation}

\subsection{Other local notation}
\begin{itemize}
\item Let $\Delta_{\Sp_n}(s)=\prod_{i=1}^n\zeta_F(s+2i-1)$ where $\zeta_F(s)$ is the local Tate factor.

\item We denote by $\Irr Q$ (resp., $\Irr_{\cusp}Q$, $\Irr_{\sqr}Q$, $\Irr_{\temp}Q$) the class of irreducible
(and resp., supercuspidal, square-integrable, tempered) representations of $Q$.
If $Q$ is quasi-split we write $\Irr_{\gen}Q$ for the class of generic representations
(or $\Irr_{\gen,\chi}Q$ if we want to keep track of the generic character $\chi$).

\item If $\pi$ is an irreducible generic representation of $\GLnn$ we use $\WhitM(\pi)$ to denote the (uniquely
determined) Whittaker space of $\pi$ with respect to the character $\psi_{N_\GLnn}$. Similarly we use the notation
$\WhitMd$, $\WhitML$, $\WhitMLd$, $\WhitG$, $\WhitGd$.

\item If $\pi$ is an irreducible generic representation of $\Levi$ let $\Ind(\WhitML(\pi))$ be the space of smooth left $U$-invariant functions $W:G\rightarrow\C$ such that
for all $g\in G$, the function $\modulus_P(m)^{-\frac12}W(mg)$ on $\Levi$ belongs to $\WhitML(\pi)$. Similarly define $\Ind(\WhitMLd(\pi))$,
$\Ind(\WhitM(\pi))$ and $\Ind(\WhitMd(\pi))$.

\end{itemize}

\subsection{Global case}
Suppose now that $F$ is a number field and $\A$ is its ring of adeles. Most of the previous notation has an obvious meaning in the global
context. Some changes in the global context are as follows.
\begin{itemize}
\item We write $\Cusp Q$ for the set of equivalence classes of irreducible cuspidal representations of $Q(\A)$.

\item $\K=\prod K_v$ where $K_v$ are as in the local case.

\item $\tilde G(\A)$ is the metaplectic two-fold cover of $G'(\A)$ which splits over $G'(F)$.

\item $\Delta_{\Sp_n}^S(s)=\prod_{i=1}^n\zeta_F^S(s+2i-1)$ for any finite set of places.

\item $\psi$ is a fixed non-trivial character of $F\bs\A$.
The characters in \S\ref{sec: characters} are defined on the adelic points of the groups (and they are trivial
on the $F$-points).

\item We will take Tamagawa measures on the adelic groups.

\item For $\Phi\in\swrz(\A^n)$ define the theta function
\[
\Theta^\Phi_{\psi_{N_\GLnn}}(v \animg g)=\sum_{\xi\in F^n}\wev(v \animg g)\Phi(\xi),\ \ v\in V(\A), g\in G'(\A).
\]

\item For any automorphic form $\varphi$ on $G(\A)$ and $\Phi\in\swrz(\A^n)$, let
$\FJ(\varphi,\Phi)$ be the Fourier--Jacobi coefficient (a genuine function on $\tilde G(\A)$)
\begin{equation} \label{def: FJ}
\FJ(\varphi,\Phi)(\animg g)=\int_{V(F)\bs V(\A)}\varphi(v\toG(g))\Theta^\Phi_{\psi_{N_\GLnn}^{-1}}(v\animg g)\,dv,\ \ g\in G'(\A).
\end{equation}
\item For an automorphic form $\tilde\varphi$ on $\tilde G(\A)$, let
\[
\tilde\whit^{\psi_{\tilde N}}({\tilde\varphi},\tilde g)=\tilde\whit^{\psi_{\tilde N}}_{\tilde\varphi}(\tilde g)=
\int_{N'(F)\bs  N'(\A)}\tilde\varphi(\animg n\tilde g)\psi_{\tilde N}( n)^{-1}\ d n, \ \ g\in G'(\A).
\]
We also write
$\tilde\whit^{\psi_{\tilde N}}(\tilde\varphi)=\tilde\whit^{\psi_{\tilde N}}_{\tilde\varphi}(e)$.
Similarly define $\whit^{\psi_N}({\varphi},g)=\whit^{\psi_N}_{\varphi}(g)$, $\whit^{\psi_N}(\varphi)$ etc.
\end{itemize}

\section{Representations of metaplectic type} \label{sec: metatype}
In this section we consider a certain class of irreducible representations of the general linear group of even rank,
both locally and globally.
The descent construction is applied to this class of representations.
In fact, roughly speaking these representations are expected to be the functorial image of $\Mp_n$ to $\GL_{2n}$.

Recall that $\GLnn=\GL_{2n}$ and $H_\GLnn$ is the centralizer of $E=\diag(1,-1,\dots,1,-1)$,
isomorphic to $\GL_n\times\GL_n$. The involution $\wnn$ lies in the normalizer of $H_\GLnn$.
(In fact, since the $\GLnn$-conjugacy class of $E$ intersects the center of $H_\GLnn$ at $\{\pm E\}$,
$H_\GLnn$ is of index two in its normalizer.)

\subsection{Local setting}
Let $F$ be a local field.
\begin{definition}
We say that $\pi\in\Irr\GLnn$ is of \emph{metaplectic type} if there exists a continuous non-zero
$H_\GLnn$-invariant linear form $\ell$ on the space of $\pi$.
We denote the set of $\pi\in\Irr\GLnn$ of metaplectic type by $\Irr_{\meta}\GLnn$.
\end{definition}

Clearly, any $\pi\in\Irr_{\meta}\GLnn$ has a trivial central character, i.e., it factors through $\PGL_{2n}$.
Moreover, it is known that if $\pi\in\Irr_{\meta}\GLnn$ then $\pi$ is self-dual
and $\ell$ is uniquely determined up to a scalar (\cite{MR1394521} -- $p$-adic case; \cite{MR2553879} -- archimedean case).
Since the involution $\wnn\in \GLnn$ normalizes $H_\GLnn$ we have
$\ell\circ\pi(\wnn)=\eps_\pi\ell$ where $\eps_\pi\in\{\pm1\}$ does not depend on the choice of $\ell$.

It will be convenient to set
\[
\Irr_{\meta,\eps}\GLnn=\{\pi\in\Irr_{\meta}\GLnn:\eps_\pi=\epsilon(\frac12,\pi,\psi)\}
\]
where $\epsilon(s,\pi,\psi)$ is the standard $\epsilon$-factor attached to $\pi$.
Note that this does not depend on the choice of $\psi$ since any $\pi\in\Irr_{\meta}\GLnn$ has a trivial central character.
We expect that in fact $\Irr_{\meta,\eps}\GLnn=\Irr_{\meta}\GLnn$.
Below we will prove a special case of this in the $p$-adic case. More precisely, we will show below
\begin{theorem} \label{thm: genmetaeps}
Suppose that $F$ is $p$-adic. Then
\begin{equation} \label{eq:shalmetagen}
\Irr_{\gen,\meta}\GLnn=\Irr_{\gen,\meta,\eps}\GLnn.
\end{equation}
\end{theorem}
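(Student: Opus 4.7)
The plan is to realise the unique (up to scalar) $H_\GLnn$-invariant functional $\ell$ on $\pi$ via a local zeta integral of Bump--Friedberg/Friedberg--Jacquet type on the Whittaker model $\WhitM(\pi)$, and to read off the sign $\eps_\pi$ from the local functional equation of this integral at the central point $s=\tfrac12$.

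More concretely, I would first realise $\pi$ in $\WhitM(\pi)$ and consider a zeta integral $Z(s,W)$ adapted to the linear model for $H_\GLnn\cong\GL_n\times\GL_n$ inside $\GLnn$. This integral converges for $\mathrm{Re}(s)$ large, continues meromorphically, and after dividing by the exterior-square $L$-factor $L(s,\pi,\wedge^2)$---whose pole (for the relevant shift of $s$) encodes the assumption $\pi\in\Irr_\meta\GLnn$---the regularised value at $s=\tfrac12$ defines a non-zero $H_\GLnn$-invariant form on $\WhitM(\pi)$. The uniqueness statement recalled in the excerpt (\cite{MR1394521}) then identifies this regularised value with $\ell$ up to a non-zero scalar.

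Second, I would invoke the local functional equation of the zeta integral, which schematically takes the form
\[
\widetilde Z(1-s,\pi(\wnn)W)=\gamma(s,\pi,\psi)\,\frac{L(1-s,\pi,\wedge^2)}{L(s,\pi,\wedge^2)}\,Z(s,W),
\]
where $\gamma(s,\pi,\psi)=\epsilon(s,\pi,\psi)L(1-s,\pi^\vee)/L(s,\pi)$ and the long Weyl element $\wnn$ appears because it represents the swap of the two $\GL_n$ factors of $H_\GLnn$. Self-duality of $\pi\in\Irr_\meta\GLnn$ (which follows from the uniqueness of $\ell$) forces $L(s,\pi)=L(s,\pi^\vee)$, so at $s=\tfrac12$ all $L$-factor ratios collapse to $1$; what remains on the right is exactly $\epsilon(\tfrac12,\pi,\psi)$, while the regularised left hand side is by construction $\ell\circ\pi(\wnn)$. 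This yields $\eps_\pi=\epsilon(\tfrac12,\pi,\psi)$.

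The main obstacle is the careful bookkeeping of normalisations: one must verify that the regularised zeta integral at $s=\tfrac12$ indeed produces a \emph{non-zero} $H_\GLnn$-invariant form (a density-type argument in the spirit of Jacquet--Rallis and Matringe), and that no hidden $L$-factor survives at the central point. Both points rest on the self-duality of $\pi$ and on the characterisation of $\Irr_{\gen,\meta}\GLnn$ via poles of $L(s,\pi,\wedge^2)$. A secondary subtlety is the choice of model: one may prefer to pass to the Shalika model (known to coincide with the linear model for generic $\pi$) in order to obtain a cleaner functional equation, but the sign $\eps_\pi$ is independent of this choice.
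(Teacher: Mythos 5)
Your route is genuinely different from the paper's, and the difference is worth understanding. The paper does not realise $\ell$ directly on the Whittaker model and read off $\eps_\pi$ from a one-shot functional equation. Instead, Theorem \ref{thm: genmetaeps} is assembled from three pieces: Matringe's classification (Theorem \ref{thm: genmetaclassification}), which writes any $\pi\in\Irr_{\gen,\meta}\GLnn$ as $\sigma_1\times\d\sigma_1\times\dots\times\sigma_k\times\d\sigma_k\times\tau_1\times\dots\times\tau_l$ with $\sigma_i$ essentially square-integrable and $\tau_i$ square-integrable of metaplectic type; Lemma \ref{lem: simpleind}, which proves multiplicativity of $\eps_\pi$ with respect to this decomposition by an explicit manipulation of the invariant form, and computes $\eps_{\sigma_i\times\d\sigma_i}=\omega_{\sigma_i}(-1)$; and Proposition \ref{prop: metasqr}, which handles the $\tau_i$ by showing that for square-integrable representations metaplectic type coincides with Shalika type, so the Friedberg--Jacquet functional equation \emph{in the Shalika model} \cite[Prop.~3.3]{MR1241129} applies directly.

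The gap in your proposal sits exactly where the paper's detour is designed to help. The functional equation you display for the Bump--Friedberg integral on the Whittaker model, with $\gamma(s,\pi,\psi)$ and a bare ratio $L(1-s,\pi,\wedge^2)/L(s,\pi,\wedge^2)$, is not the correct form: that $L$-ratio should come packaged inside an exterior-square $\gamma$-factor $\gamma(\cdot,\pi,\wedge^2,\psi)$, so there is an extra $\epsilon(\cdot,\pi,\wedge^2,\psi)$ term that you have silently dropped. After regularising at the pole of the exterior-square $L$-factor, what survives on the right-hand side is $\epsilon(\frac12,\pi,\psi)$ multiplied by an exterior-square $\epsilon$-constant at the relevant point, and you offer no reason this latter constant equals $1$. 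By contrast, the Friedberg--Jacquet functional equation in the Shalika model involves \emph{only} the standard $\gamma$-factor, which is why it yields $\eps_\pi=\gamma(\frac12,\pi,\psi)=\epsilon(\frac12,\pi,\psi)$ cleanly.

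Your fallback --- switch to the Shalika model --- would indeed give the clean answer, but it requires $\Irr_{\gen,\meta}\GLnn\subset\Irr_{\shal}\GLnn$. You assert the two models are ``known to coincide for generic $\pi$'', but in the paper this is only \emph{expected}: the remark after Theorem \ref{thm: genmetaclassification} records Gan's observation that it follows from the theta correspondence for $(\GL_{2n},\GL_{2n})$, and explicitly omits the argument because it is not needed. The whole point of the paper's reduction is to apply the Shalika-model functional equation only to the square-integrable $\tau_i$, where the equivalence of metaplectic and Shalika type \emph{is} established (Proposition \ref{prop: metasqr}), and then propagate $\eps_\pi$ multiplicatively. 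To make your argument rigorous you would either have to verify the vanishing of the exterior-square $\epsilon$-contribution in the Bump--Friedberg functional equation, or supply the missing theta-correspondence argument that all generic metaplectic-type representations admit Shalika models.
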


\subsection{The case $n=1$}\label{S:metan=1}
For completeness we study the case of $\PGL_2$.
Let $\pi\in\Irr\PGL_2$ (viewed as an irreducible representation of $\GL_2$ with trivial central character).
If $\pi$ is generic, i.e., infinite dimensional,
then $\pi\in \Irr_{\meta}\GL_2$, i.e., it admits a non-trivial continuous linear form invariant under the diagonal torus.
On the Whittaker model this functional is given by
\[
\ell(W)=\frac{\int_{F^*} W(\sm t001)\abs{t}^s\ dt}{L(s+\frac12,\pi)}\big|_{s=0}, \ \ \ W\in\WhitM(\pi)
\]
in the sense of analytic continuation. In fact, $L(s,\pi)$ is holomorphic at $s=\frac12$ in this case.
The functional equation (in the sense of analytic continuation)
\[
\int_{F^*}W(\sm t001\sm {}11{})\abs{t}^{-s}\ dt=
\gamma(s+\frac12,\pi,\psi)\int_{F^*} W(\sm t001)\abs{t}^s\ dt,\ \ \ W\in\WhitM(\pi)
\]
shows that $\eps_\pi=\epsilon(\frac12,\pi,\psi)$.

\begin{lemma}
Suppose that $\pi\in\Irr\GL_2$ is non-generic (i.e., finite dimensional) with trivial central character.
\begin{enumerate}
\item In the $p$-adic case $\pi=\chi\circ\det$ where $\chi$ is a quadratic character of $F^*$.
Moreover, $\pi\in\Irr_{\meta}\GL_2$ if and only if $\chi$ is trivial, in which case $\eps_\pi=\epsilon(\frac12,\pi,\psi)=1$.
\item Suppose that $F=\R$ and let $p=\dim\pi$. Then $p$ is odd. Let $\pi_p$ be the restriction
to $\GL_2(\R)$ of the unique irreducible $p$-dimensional algebraic representation of $\GL_2(\C)$ with trivial central character.
Then either $\pi=\pi_p$ or $\pi=\pi_p\otimes\sgn$ where $\sgn$ is the signum character. In the first case $\pi\in\Irr_{\meta}\GL_2$ and
$\eps_\pi=(-1)^{(p-1)/2}=\epsilon(\frac12,\pi,\psi)$. In the second case $\pi\notin\Irr_{\meta}\GL_2$.
\item Suppose that $F=\C$. Then there exist unique positive integers $m$ and $k$ such that $\pi\rest_{\SL_2(\C)}=\sigma_m\otimes\overline{\sigma_k}$
where $\sigma_m$ is the algebraic $m$-dimensional irreducible representation of $\SL_2(\C)$ and $k+m$ is even.
Moreover, $\pi\in\Irr_{\meta}\GL_2$ if and only if both $m$ and $k$ are odd and in this case
$\eps_{\pi}=(-1)^{(m-k)/2}=\epsilon(\frac12,\pi,\psi)$.
\end{enumerate}
Thus, in all cases $\Irr_{\meta}\GL_2=\Irr_{\meta,\eps}\GL_2$.
\end{lemma}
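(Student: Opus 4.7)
The plan is to treat each of the three cases by the same four-step procedure: classify the finite-dimensional irreducibles of $\GL_2(F)$ with trivial central character; determine when there is a nonzero $H_{\GLnn}$-invariant functional by decomposing under the diagonal torus $T=H_{\GLnn}$; evaluate $\wnn$ on the resulting invariant line to compute $\eps_\pi$; and compute $\epsilon(\tfrac12,\pi,\psi)$ by identifying enough of the Langlands parameter $\mu\dsum\mu^{-1}$ of $\pi$ to apply the Tate-type identity $\epsilon(\tfrac12,\mu,\psi)\epsilon(\tfrac12,\mu^{-1},\psi)=\mu(-1)$, so that in fact only $\mu(-1)$ need be computed. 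In the $p$-adic case every finite-dimensional irreducible smooth representation is one-dimensional of the form $\chi\circ\det$; triviality of the central character gives $\chi^2=1$, and $H_{\GLnn}$-invariance of the scalar $\diag(a,b)\mapsto\chi(ab)$ forces $\chi\equiv 1$, so $\pi$ is trivial and both $\eps_\pi$ and $\epsilon(\tfrac12,\pi,\psi)$ equal $1$.

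For $F=\R$, the classification of finite-dimensional irreducibles of $\GL_2(\R)$ (via algebraic representations of $\GL_2(\C)$, possibly twisted by $\sgn\circ\det$) produces the stated dichotomy $\pi=\pi_p$ or $\pi=\pi_p\otimes\sgn$ with $p$ odd, where $\pi_p\cong\operatorname{Sym}^{p-1}(\C^2)\otimes\det^{-(p-1)/2}$. A weight decomposition of $\pi_p$ under $T$ produces a unique zero-weight line spanned by $X^{(p-1)/2}Y^{(p-1)/2}$, on which $\diag(-1,1)$ acts trivially in $\pi_p$ but by $-1$ in $\pi_p\otimes\sgn$; this proves the first half of part (ii). Since the swap $X\leftrightarrow Y$ fixes the middle vector, $\eps_{\pi_p}=\det(\wnn)^{-(p-1)/2}=(-1)^{(p-1)/2}$, and identifying $\pi_p$ as the Langlands quotient of $\Ind(|\cdot|^{(p-1)/2}\sgn^\nu,|\cdot|^{-(p-1)/2}\sgn^\nu)$ with $\nu\equiv(p-1)/2\pmod{2}$ (forced by the $O(2)$-type of the $SO(2)$-fixed vector) gives $\mu(-1)=(-1)^\nu=(-1)^{(p-1)/2}=\epsilon(\tfrac12,\pi_p,\psi)$.

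For $F=\C$, any irreducible continuous finite-dimensional $\pi$ restricted to $\SL_2(\C)$ decomposes as $\sigma_m\otimes\overline{\sigma_k}$ by the standard classification; requiring a twist $\chi\circ\det$ to trivialize the central character on $\C^\times$ forces $\chi(z)=z^{-(m-1)/2}\bar z^{-(k-1)/2}$, whose well-definedness as a character of $\C^\times$ demands $(k-m)/2\in\Z$, i.e.\ $m+k$ even. The weight analysis on $T=(\C^\times)^2$ then produces a zero-weight line iff both $m$ and $k$ are odd. Since $\wnn$ acts trivially on each tensor factor of the middle vector $X^{(m-1)/2}Y^{(m-1)/2}\otimes\overline{X^{(k-1)/2}Y^{(k-1)/2}}$, $\eps_\pi=\chi(-1)=(-1)^{(m-1)/2+(k-1)/2}=(-1)^{(m-k)/2}$ (using $k$ odd); on the $L$-parameter side $\mu(z)=z^{(m-1)/2}\bar z^{-(k-1)/2}|z|_\C^s$ yields $\mu(-1)=(-1)^{(m-k)/2}=\epsilon(\tfrac12,\pi,\psi)$. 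The main obstacle throughout is pinning down enough of the Langlands parameter in the archimedean cases --- the sign character $\sgn^\nu$ in the real case and the (possibly non-integer) bidegree of $\mu$ in the complex case --- since these representations are non-tempered quotients of reducible principal series.
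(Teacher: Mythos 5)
Your proposal follows essentially the same route as the paper: classify the finite-dimensional $\pi$, decide membership in $\Irr_{\meta}\GL_2$ via the zero-weight line of the diagonal torus $H_{\GLnn}=T$, read off $\eps_\pi$ from the action of $\wnn$ (the $\operatorname{Sym}^{p-1}$-middle vector being $\wnn$-fixed, so the sign comes entirely from the $\det$-twist), and compute $\epsilon(\tfrac12,\pi,\psi)=\mu(-1)$ from the Langlands quotient description $\Ind(\mu,\mu^{-1})$. All the final sign computations agree with the paper's.

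One slip worth flagging: in the real case you identify $\pi_p$ as the Langlands quotient of $\Ind(\abs{\cdot}^{(p-1)/2}\sgn^\nu,\abs{\cdot}^{-(p-1)/2}\sgn^\nu)$. Writing $p=2m+1$, the ratio of the two inducing characters there is $\abs{\cdot}^{2m}$ with trivial sign part; for $\GL_2(\R)$ such a principal series is irreducible (the integrality/parity condition for reducibility fails), so $\pi_p$ is not a subquotient of it. The correct $\abs{\cdot}$-exponent is $m+\tfrac12$, i.e.\ $\mu=\abs{\cdot}^{m+1/2}\sgn^m$ as in the paper. The sign part $\sgn^m$ you wrote is right, and since $\abs{\cdot}(-1)=1$, your value $\mu(-1)=(-1)^{(p-1)/2}$ is unaffected. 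A similar caveat applies to the explicit $\mu$ you propose in the complex case: the paper's normalization gives $\mu^2(z)=z^m\bar z^k$, i.e.\ $\mu(z)=(z/\abs{z})^{(m-k)/2}\abs{z}^{(m+k)/2}$, which differs from the $z^{(m-1)/2}\bar z^{-(k-1)/2}\abs{z}_\C^s$ you wrote; again the discrepancy is invisible at $z=-1$, so the conclusion $\mu(-1)=(-1)^{(m-k)/2}$ stands.
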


\begin{proof}
The $p$-adic case is well known.

Note that if $\pi$ is the Langlands quotient of $\Ind(\mu,\mu^{-1})$ then $\epsilon(\frac12,\pi,\psi)=\mu(-1)$.

For the case $F=\R$, every finite-dimensional $\pi\in\Irr\GL_2$ is of the form $\sigma\otimes\chi$
where $\sigma$ is an algebraic representation of $\GL_2$ and $\chi$ is a character of $\R^*$.
Let $\omega_\sigma$ be the central character of $\sigma$.
If $\pi$ has a trivial central character then $\omega_\sigma(-1)=1$ and therefore $\sigma$ is odd dimensional.
By twisting $\sigma$ by a suitable power of $\det$ we can also assume that $\omega_\sigma=1$ and therefore
$\chi^2=1$, i.e. $\chi=1$ or $\chi=\sgn$.
Write $p=2m+1$ and note that $\pi_p$ is the Langlands quotient of
$\Ind(\abs{\cdot}^{m+\frac12},\abs{\cdot}^{-(m+\frac12)})\otimes\sgn^m$.
Therefore, by the above $\epsilon(\frac12,\pi_p,\psi)=(-1)^m$.
On the other hand, it is clear that the algebraic representation $\pi_p$ has a torus invariant vector and that $\eps_{\pi_p}=(-1)^m$.

In the complex case, the finite-dimensional representations of $\SL_2(\C)$ are of the form
$\sigma_m\otimes\overline{\sigma_k}$ for unique positive integers $m$ and $k$.
This can be viewed as the Langlands quotient of the parabolic induction of the character
$\sm z{}{}{z^{-1}}\mapsto z^m\bar z^k$.
It descends to $\operatorname{PSL}_2(\C)=\PGL_2(\C)$ if and only if $m$ and $k$ have the same parity.

Thus we can write $\pi$ as the Langlands quotient of $\Ind(\mu,\mu^{-1})$ where $\mu^2(z)=z^m\bar z^k$.
Therefore, $\epsilon(\frac12,\pi,\psi)=\mu(-1)=(-1)^{(m-k)/2}$.

It is clear that $\pi\in\Irr_{\meta}\GL_2$ if and only if both $\sigma_m$ and $\sigma_k$
admit a vector invariant under the torus, i.e., if and only if both $m$ and $k$ are odd,
and in this case $\eps_\pi=(-1)^{(m-1)/2}(-1)^{(k-1)/2}$.
\end{proof}

\subsection{Shalika functionals}
A closely related notion to metaplectic type is a Shalika functional.
Let $\Shal$ be the Shalika subgroup
\[
\Shal=\{\sm g{}{}g\sm{I_n}X{}{I_n}:g\in\GLn, X\in\Mat_{n,n}\}
\]
of $\GLnn$ with the character $\psi_{\Shal}(\sm g{}{}g\sm{I_n}X{}{I_n})=\psi(\tr X)$.
By definition, a Shalika functional is a non-trivial continuous functional $ L$ on $\pi$ such that
$ L(\pi(s)v)=\psi_{\Shal}(s)L(v)$ for all $s\in\Shal$ and $v\in V_\pi$.
We say that $\pi\in\Irr\GLnn$ is of \emph{Shalika type} if it admits a Shalika functional.
(This property is independent of the choice of $\psi$.)
Once again, a Shalika functional is unique up to a scalar if it exists (\cite{MR1394521, MR2552255}).
We denote the class of irreducible representations of Shalika type by $\Irr_{\shal}\GLnn$.
It is known that $\Irr_{\shal}\GLnn\subset\Irr_{\meta,\eps}\GLnn$ (cf.~\cite[\S3]{MR1241129}
and in particular the functional equation of [loc. cit., Proposition 3.3] for the $p$-adic case and
\cite{MR2552255} for the archimedean case).
Also note that for $n=1$ we have $\Irr_{\shal}\GLnn=\Irr_{\gen}\PGL_2$.
Of course, $\Irr_{\shal}\GLnn\ne\Irr_{\meta}\GLnn$, e.g.~the trivial representation
belongs to $\Irr_{\meta}\GLnn-\Irr_{\shal}\GLnn$.

\subsection{}
Let $I(\pi,s)$ be the representation of $G$ parabolically induced from $P$ and $\pi\otimes\abs{\det\cdot}^s$,
viewed as a representation of $\Levi$ through $\levi$.
We recall that if $\pi\in\Irr_{\meta}\GLnn$ then $I(\pi,\frac12)$ admits a non-trivial $H$-invariant functional; here
$H$ is the centralizer of $\levi(E)$ in $G$.
This is proved in the course of the proof of \cite[\S3.3, Theorem 2]{MR1671452}. More precisely, if $\ell$ is an
$H_{\GLnn}$-invariant functional on $\pi$ then
\begin{equation} \label{eq: Hinv}
\varphi\rightarrow\int_{H\cap P\bs H}\ell(\varphi(h))\ dh\text{ defines an $H$-invariant functional on }I(\pi,\frac12).
\end{equation}
This is well defined since $\modulus_{H\cap P}(\levi(m))=\modulus_P^{\frac12}(\levi(m))\abs{\det m}^{\frac12}$ for all $m\in H_\GLnn$.
\label{sec: Hdist}

We recall the local $L$-factor $L(s,\pi,\wedge^2)$ defined by Shahidi -- cf.~\cite{MR1159430}.

We record the following assertion which follows from results in the literature.
\begin{proposition} \label{prop: metasqr}
Suppose $\pi\in\Irr_{\sqr}\GLnn$ and $F$ is $p$-adic. Then the following conditions are equivalent.
\begin{enumerate}
\item $L(0,\pi,\wedge^2)=\infty$.
\item $I(\pi,\frac12)$ is reducible.
\item $\pi\in\Irr_{\shal}\GLnn$.
\item $\pi\in\Irr_{\meta}\GLnn$.
\end{enumerate}
In particular, $\Irr_{\sqr,\meta}\GLnn\subset\Irr_{\meta,\eps}\GLnn$.
\end{proposition}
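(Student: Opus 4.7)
The plan is to establish a cyclic chain $(1)\Rightarrow(2)\Rightarrow(3)\Rightarrow(4)\Rightarrow(1)$, after which the final inclusion $\Irr_{\sqr,\meta}\GLnn\subset\Irr_{\meta,\eps}\GLnn$ follows by combining the equivalence $(4)\Leftrightarrow(3)$ with the already cited containment $\Irr_{\shal}\GLnn\subset\Irr_{\meta,\eps}\GLnn$. Three of the four individual implications are essentially references to the literature, and only the closure of the loop demands real work.

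For $(1)\Leftrightarrow(2)$ I would appeal to Shahidi's reducibility criterion for standard modules: for the Siegel parabolic of $G=\Sp_{4n}$ and a tempered $\pi$ of the Levi $\Levi\simeq\GL_{2n}$, reducibility of $I(\pi,\tfrac12)$ is equivalent to a pole at $s=\tfrac12$ of the normalizing factor of the long intertwining operator, which in turn is identified via Shahidi's computation of Plancherel measures with a pole at $s=0$ of $L(s,\pi,\wedge^2)$ (the relevant constituent of the adjoint representation on the Lie algebra of the dual unipotent radical being $\wedge^2$). For $(1)\Leftrightarrow(3)$ I would invoke Matringe's characterization, building on Jacquet--Rallis and Friedberg--Jacquet, of Shalika models for generic irreducible representations of $\GLnn$ over a $p$-adic field in terms of the pole of $L(s,\pi,\wedge^2)$ at $s=0$; this applies in our setting since square-integrable representations are generic. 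The implication $(3)\Rightarrow(4)$ is the containment $\Irr_{\shal}\GLnn\subset\Irr_{\meta}\GLnn$ recorded in the preceding subsection.

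The remaining step, $(4)\Rightarrow(2)$, is the heart of the matter and is where I expect the main obstacle to lie. Starting from an $H_\GLnn$-invariant functional $\ell$ on $\pi$, the integral recipe \eqref{eq: Hinv} produces a nonzero $H$-invariant functional on $I(\pi,\tfrac12)$. For $\pi$ square-integrable, I would argue that this incompatibility with $I(\pi,\tfrac12)$ being irreducible tempered forces reducibility: the symmetric pair $(G,H)=(\Sp_{4n},\Sp_{2n}\times\Sp_{2n})$ is a spherical variety whose tempered spectrum of Siegel-induced representations supports an $H$-invariant functional only in the presence of an additional subquotient, so an irreducible $I(\pi,\tfrac12)$ cannot be $H$-distinguished. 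Making this rigorous is the delicate point; the most direct route in the $p$-adic case is likely via the theta correspondence between $\GLnn$ and $\Mp_n$, under which metaplectic type for $\pi$ corresponds to $\pi$ being a lift, and then the pole of $L(s,\pi,\wedge^2)$ is extracted from Howe duality and the first-occurrence principle. An alternative approach is to bypass this geometric argument and use Tadi\'c's explicit analysis of distinguished subquotients of induced representations of symplectic groups, which for square-integrable inducing data gives the desired reducibility directly.
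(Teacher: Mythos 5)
Your chain of implications mirrors the paper's own proof, and for three of the four legs the citations you invoke (Shahidi's Plancherel-measure criterion for $(1)\Leftrightarrow(2)$; Matringe/Kewat--Raghunathan-type results for $(1)\Rightarrow(3)$; the recorded containment $\Irr_{\shal}\GLnn\subset\Irr_{\meta}\GLnn$ for $(3)\Rightarrow(4)$) are the right kind of thing even if not verbatim what the paper cites (the paper uses Casselman--Shahidi and Mui\'c for $(1)\Leftrightarrow(2)$, and Kewat and Kewat--Raghunathan for $(1)\Rightarrow(3)$). You also correctly identify $(4)\Rightarrow(2)$ as the pivot, and the idea you propose --- build an $H$-invariant functional on $I(\pi,\tfrac12)$ from \eqref{eq: Hinv}, then argue that an \emph{irreducible} generically-induced $I(\pi,\tfrac12)$ cannot carry such a functional --- is exactly the paper's argument.

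The gap is that you present this last step as an open problem and speculate about ways to make it rigorous, when in fact it is already a theorem: Ginzburg--Rallis--Soudry, \S3.3, Theorem 1 of their 1999 JAMS paper, shows precisely that for generic $\pi$ an irreducible $I(\pi,\tfrac12)$ admits no nonzero $H$-invariant form, and the paper cites exactly this. Moreover, neither of your two proposed substitutes actually works as stated. There is no theta dual pair $(\GL_{2n},\Mp_n)$; the passage from $\Mp_n$ to $\GL_{2n}$ in this paper is the descent/Shimura-type lift, not a Howe duality, and Howe duality together with first occurrence would not by itself extract the pole of $L(s,\pi,\wedge^2)$. Nor does Tadi\'c's classification of reducibility points for $\Sp$ give an analysis of \emph{distinguished} subquotients for the symmetric pair $(G,H)$; it gives reducibility points, which is what you would be trying to prove, not a tool you can use circularly. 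The shape of your proof is right; what is missing is the single reference that closes the loop, and the fallback strategies you sketch should be dropped.
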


\begin{proof}
The equivalence of the first two conditions (in fact for any $\pi\in\Irr_{\temp}\GLnn$, also in the archimedean case)
follows from \cite[Theorem 4.1 and Proposition 5.3]{MR1634020} and \cite{MR1637097}.

The implication 1$\implies$3 (for $\pi\in\Irr_{\sqr}\GLnn$ and $F$ $p$-adic) follows from \cite[Corollary 4.4]{MR2846403}
and the main result of \cite{MR3008415}.

If $\pi\in\Irr_{\meta,\sqr}\GLnn$ (or in fact, if $\pi\in\Irr_{\meta,\gen}\GLnn$) then the reducibility of $I(\pi,\frac12)$ follows from
\cite[\S3.3, Theorem 1]{MR1671452} and the fact that $I(\pi,\frac12)$ admits a non-trivial $H$-invariant linear form.

Since any $\pi\in\Irr_{\shal}\GLnn$ belongs to $\Irr_{\meta}\GLnn$ the proposition follows.
\end{proof}

Denote by $\times$ parabolic induction for the general linear group.

\begin{lemma}[See also \cite{MR1954940}] \label{lem: simpleind}
Suppose that $n=n_1+\dots+n_k$, $\pi_1,\dots,\pi_k\in\Irr_{\meta}\GL_{2n_i}$, $i=1,\dots,k$  and
$\pi=\pi_1\times\dots\times\pi_k$ is irreducible.
Then $\pi\in\Irr_{\meta}\GLnn$ and $\eps_\pi=\eps_{\pi_1}\dots\eps_{\pi_k}$.
In particular, if $\pi_1,\dots,\pi_k\in\Irr_{\meta,\eps}\GL_{2n_i}$ then $\pi\in\Irr_{\meta,\eps}\GLnn$.

Similarly, let $\sigma\in\Irr\GLn$ and suppose that $\pi=\sigma\times\d\sigma\in\Irr\GLnn$.
Then $\pi\in\Irr_{\meta,\eps}\GLnn$, so that $\eps_\pi=\omega_\sigma(-1)$
where $\omega_\sigma$ is the central character of $\sigma$.
\end{lemma}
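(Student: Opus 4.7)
I plan to treat the two assertions separately. For the first, let $P_0$ denote the standard parabolic of $\GLnn$ with Levi $M_0 = \GL_{2n_1}\times\cdots\times\GL_{2n_k}$, so that $\pi = \Ind_{P_0}^{\GLnn}(\pi_1\otimes\cdots\otimes\pi_k)$. I would use a Mackey-type open-orbit construction: first identify a representative $w$ of the unique open $(P_0,H_\GLnn)$-double coset in $\GLnn$ for which $H_\GLnn\cap w^{-1}P_0 w$ projects in $M_0$ onto $\prod_i H_{\GL_{2n_i}}$. (A dimension count shows such an open orbit exists, and a suitable $w$ can be written down by interleaving the odd- and even-indexed coordinates within each block.) Given nonzero $H_{\GL_{2n_i}}$-invariant functionals $\ell_i$ on $\pi_i$, the prescription
\[
\ell(\phi) = \int_{(H_\GLnn\cap w^{-1}P_0 w)\bs H_\GLnn}(\ell_1\otimes\cdots\otimes\ell_k)\bigl(\phi(w h)\bigr)\,dh,
\]
in direct parallel with \eqref{eq: Hinv}, defines an $H_\GLnn$-invariant functional on $\pi$ once convergence is checked --- automatic in the $p$-adic case, and in the archimedean case handled by inserting auxiliary shifts $\abs{\det\cdot}^{s_i}$ on each factor and analytically continuing to $s_i=0$. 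Irreducibility of $\pi$ together with uniqueness of the $H_\GLnn$-invariant functional forces $\ell\ne 0$, so $\pi\in\Irr_\meta\GLnn$.

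To compute $\eps_\pi$, I would apply $\pi(\wnn)$ inside the defining integral of $\ell$. Writing $\wnn = \sigma\cdot\diag(w_{2n_1},\dots,w_{2n_k})$ where $\sigma$ is the block-permutation reversing the blocks of $M_0$, the diagonal piece $\diag(w_{2n_1},\dots,w_{2n_k})$ contributes $\prod_i\eps_{\pi_i}$ through the defining property of each $\ell_i$, while $\sigma$ leaves the open-orbit integral invariant (via the intertwining isomorphism $\pi_k\times\cdots\times\pi_1\cong\pi$ forced by irreducibility of $\pi$). Uniqueness of $\ell$ then yields $\eps_\pi = \eps_{\pi_1}\cdots\eps_{\pi_k}$, and the multiplicativity $\epsilon(\tfrac12,\pi,\psi)=\prod_i\epsilon(\tfrac12,\pi_i,\psi)$ under isobaric direct sum gives the $\Irr_{\meta,\eps}$ statement.

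For the second assertion, I plan to exhibit a Shalika functional on $\pi=\sigma\times\d\sigma$. Realizing $\pi$ as induced from $\sigma\otimes\d\sigma$ on the parabolic with Levi $\GLn\times\GLn$, one pairs the induced sections against the canonical bilinear form $\sigma\otimes\d\sigma\to\C$ and integrates against $\psi_\Shal$ over a suitable open orbit; the resulting linear form is $(\Shal,\psi_\Shal)$-equivariant, and irreducibility guarantees nonvanishing. Hence $\pi\in\Irr_\shal\GLnn\subset\Irr_{\meta,\eps}\GLnn$ by the inclusion already recorded above. The equality $\eps_\pi = \omega_\sigma(-1)$ then reads $\epsilon(\tfrac12,\pi,\psi)=\epsilon(\tfrac12,\sigma,\psi)\epsilon(\tfrac12,\d\sigma,\psi)=\omega_\sigma(-1)$, which is a standard consequence of the local functional equation for $\GLn$.

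The main obstacle is establishing convergence and nonvanishing of the open-orbit integral in the first part, particularly in the archimedean case; once that is in place, the identification of the open orbit, the sign bookkeeping under $\wnn$, and the Shalika construction in the second part are all routine.
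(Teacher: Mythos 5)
Your approach to the first assertion diverges from the paper's in a way that introduces genuine difficulties. You opt for an \emph{open}-orbit integral over $(H_\GLnn\cap w^{-1}P_0 w)\bs H_\GLnn$, but the key structural observation in the paper is that here one can use the \emph{closed} orbit: since $E\in\new M$, the subgroup $\new P\cap H_\GLnn$ is a \emph{parabolic subgroup of} $H_\GLnn$, and $\modulus_{\new P\cap H_\GLnn}=\modulus_{\new P}^{1/2}\rest_{\new P\cap H_\GLnn}$, so $\lambda(\varphi)=\int_{\new P\cap H_\GLnn\bs H_\GLnn}\beta(\varphi(h))\,dh$ is an integral over a \emph{compact} flag variety. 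Convergence is then trivial, and non-vanishing is immediate (take a section supported near the identity coset). Your open-orbit integral, by contrast, is over a non-compact quotient of dimension $\dim\new U$, and the claim that it is ``automatic in the $p$-adic case'' is false: open-orbit period integrals on $p$-adic flag varieties are the prototypical example of integrals that require meromorphic continuation (this is precisely what Blanc--Delorme provide, and what the paper invokes for the \emph{second} assertion where a closed orbit is unavailable). Even after continuation, non-vanishing at the central point does not follow from ``irreducibility plus uniqueness'' — a priori the continued distribution could vanish identically, and you would be left with no invariant functional at all, not a contradiction.

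The sign bookkeeping also has a soft spot: the factorization $\wnn=\sigma\cdot\diag(w_{2n_1},\dots,w_{2n_k})$ with $\sigma$ ``reversing the blocks of $M_0$'' only makes sense as stated when $n_i=n_{k+1-i}$ for all $i$; in general the block-reversal does not normalize $\new M$. What one actually needs — and what the paper uses — is the clean statement $\wnn\in H_\GLnn\, w_0^{\new M}$, together with the observation that $w_0^{\new M}\in\new M$ normalizes both $H_\GLnn$ (it conjugates $E$ to $-E$) and $\new P\cap H_\GLnn$, which lets one move $w_0^{\new M}$ across the compact integral and read off $\eps_\pi=\eps_{\pi_1}\cdots\eps_{\pi_k}$ directly from $\beta\circ\tau(w_0^{\new M})=\eps_\tau\beta$.

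For the second assertion your route via the Shalika functional is a legitimate alternative to the paper's, which instead builds the linear period on $\sigma\abs{\det}^s\times\d\sigma\abs{\det}^{-s}$ by an open-orbit integral (citing Blanc--Delorme and Brylinski--Delorme for continuation) and then uses the conjugation relation $\nu w\nu^{-1}=\sm{-I_n}{I_n}{}{I_n}$ to extract $\omega_\sigma(-1)$. Two caveats with your version: the existence of a Shalika functional on $\sigma\times\d\sigma$ is itself a nontrivial input that you would need to source (and the lemma allows arbitrary $\sigma\in\Irr\GLn$, not only generic ones, so one must check the hypothesis $\pi\in\Irr_\shal$ actually holds in the relevant cases); and your sketch again glosses over convergence of the open-orbit construction. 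Once you notice the closed-orbit shortcut in the first part, the paper's argument is both shorter and entirely free of analytic-continuation issues there — I would rework the proposal along those lines.
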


\begin{proof}
For the first part we recall the argument of \cite[\S5.5]{MR1954940}.
Let $\new P=\new M\ltimes\new U$ be the parabolic subgroup of type $(2n_1,\dots,2n_k)$ of $\GLnn$
so that $\pi$ is the parabolic induction of $\tau=\pi_1\otimes\dots\otimes\pi_k$ from $\new P$.
Let $w_0^{\new M}$ be the longest Weyl element in $\new M$.
Let $\beta_i$ be non-trivial functionals on $\pi_i$ which are invariant under
$\GL_{n_i}\times\GL_{n_i}$, $i=1,\dots,k$.
Let $\beta$ be the functional $\beta_1\otimes\dots\otimes\beta_k$ on $\tau$.
We have $\beta\circ\tau(w_0^{\new M})=\eps_\tau\beta$ where $\eps_\tau=\eps_{\pi_1}\dots\eps_{\pi_k}$.
Note that $\new P\cap H_\GLnn$ is a parabolic subgroup of $H_\GLnn$ and $\modulus_{\new P\cap H_\GLnn}=\modulus_{\new P}^{\frac12}\rest_{\new P\cap H_\GLnn}$.
Therefore, $\lambda(\varphi):=\int_{\new P\cap H_\GLnn\bs H_\GLnn}\beta(\varphi(h))\ dh$
is a non-zero $H_\GLnn$-invariant form on $\pi$. Since $\wnn\in H_\GLnn w_0^{\new M}$, we have
\begin{multline*}
\lambda(\pi(\wnn)\varphi)=\lambda(\pi(w_0^{\new M})\varphi)=
\int_{\new P\cap H_\GLnn\bs H_\GLnn}\beta(\varphi(hw_0^{\new M}))\ dh=
\int_{\new P\cap H_\GLnn\bs H_\GLnn}\beta(\varphi(w_0^{\new M} h))\ dh\\=\int_{\new P\cap H_\GLnn\bs H_\GLnn}\beta(\tau(w_0^{\new M})\varphi(h))\ dh=
\eps_{\tau}\lambda(\varphi).
\end{multline*}

For the second part, let $\beta$ be the canonical pairing on $\sigma\otimes\d\sigma$.
Consider the parabolic subgroup $\new P=\new M\ltimes\new U$ of type $(n,n)$.
We construct a family $\lambda(s)$ of $\new M$-invariant functionals on $\sigma\otimes\abs{\det}^s\times
\d\sigma\otimes\abs{\det}^{-s}$ by setting
\[
\lambda(\varphi,s):=\int_{\new M_\nu\bs\new M}\beta(\varphi_s(\nu h))\ dh
\]
where $w:=\sm{}{I_n}{I_n}{}$, $\nu:=w\sm{I_n}{I_n}{}{I_n}=\sm{}{I_n}{I_n}{I_n}$
and $\new M_\nu=\new M\cap\nu^{-1}\new P\nu=\new M\cap\nu^{-1}\new M\nu=\{\sm g{}{}g:g\in\GLn\}$.
By general results (in the context of symmetric spaces) of Blanc--Delorme \cite{MR2401221} in the $p$-adic case
and Brylinski--Delorme \cite{MR1176208} in the archimedean case,
$\lambda(s)$ converges for $\Re s\gg0$ and admits a meromorphic continuation.\footnote{At least the convergence part
can be easily checked directly in this case.}

Since $w$ normalizes both $\new M$ and $\new M_\nu$ and $\nu w\nu^{-1}=\sm{-I_n}{I_n}{}{I_n}$ we get
\begin{multline*}
\lambda(I(s,w)\varphi,s)=\int_{\new M_\nu\bs\new M}\beta(\varphi_s(\nu hw))\ dh=
\int_{\new M_\nu\bs\new M}\beta(\varphi_s(\nu wh))\ dh\\=
\int_{\new M_\nu\bs\new M}\beta(\varphi_s(\sm{-I_n}{I_n}{}{I_n}\nu h))\ dh=
\omega_\sigma(-1)\lambda(\varphi,s)
\end{multline*}
for all $s$. By taking the leading term in the Laurent expansion at $s=0$
we get a non-zero $\new M$-invariant functional $\lambda$ on $\pi$
such that $\lambda\circ\pi(w)=\omega_\sigma(-1)\lambda$.
Note that $\new M$ is conjugate to $H_\GLnn$ by a suitable permutation matrix which takes
$w$ to $\wnn$. We infer that $\eps_\pi=\omega_\sigma(-1)$.
Finally note that $\epsilon(\frac12,\pi,\psi)=\omega_\sigma(-1)$.
\end{proof}

Recall the following classification result due to Matringe.

\begin{theorem}[\cite{1301.0350}] \label{thm: genmetaclassification}
Suppose that $F$ is $p$-adic.
Then the set $\Irr_{\gen,\meta}\GLnn$ consists of the irreducible representations of the form
\[
\pi=\sigma_1\times\d\sigma_1\times\dots\times\sigma_k\times\d\sigma_k\times
\tau_1\times\dots\times\tau_l
\]
where $\sigma_1,\dots,\sigma_k$ are essentially square-integrable,
$\tau_1,\dots,\tau_l$ are square-integrable of metaplectic type (i.e., $L(0,\tau_i,\wedge^2)=\infty$) for all $i$.
\end{theorem}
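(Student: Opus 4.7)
The plan is to establish the two inclusions separately. The containment ``$\supseteq$'' follows by assembling results already proved earlier in the section. For the converse ``$\subseteq$'' I would proceed via a Mackey-style orbit analysis applied to the Bernstein--Zelevinsky presentation of $\pi$, followed by an induction on the number of induced pieces.

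For sufficiency, suppose $\pi$ is of the stated form and irreducible. Lemma~\ref{lem: simpleind} shows that each pair $\sigma_i\times\d\sigma_i$ belongs to $\Irr_{\meta,\eps}\GL_{2n_i}$. For each square-integrable factor $\tau_j$, the hypothesis $L(0,\tau_j,\wedge^2)=\infty$ together with Proposition~\ref{prop: metasqr} places $\tau_j$ in $\Irr_{\shal}\GL_{2m_j}\subseteq\Irr_{\meta}\GL_{2m_j}$. A second application of Lemma~\ref{lem: simpleind} to the full product then yields $\pi\in\Irr_{\meta}\GLnn$. Since the inducing data consists of essentially square-integrables, $\pi$ is automatically generic, so $\pi\in\Irr_{\gen,\meta}\GLnn$.

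For necessity, take $\pi\in\Irr_{\gen,\meta}\GLnn$ and write $\pi=\Delta_1\times\cdots\times\Delta_m$ where each $\Delta_i$ is essentially square-integrable on $\GL_{d_i}$; by genericity this presentation is irreducible (any permutation yields an isomorphic representation) and the multiset $\{\Delta_i\}$ is an invariant of $\pi$. Note that $\pi\in\Irr_{\meta}$ forces a trivial central character on $\pi$. Let $\ell\ne 0$ be an $H_\GLnn$-invariant linear form on $\pi$. I would then stratify $\pi|_{H_\GLnn}$ along the double cosets $\new P\backslash\GLnn/H_\GLnn$, where $\new P$ is the standard parabolic with Levi $\GL_{d_1}\times\cdots\times\GL_{d_m}$. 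A direct computation identifies these double cosets with combinatorial data: an involution on $\{1,\dots,m\}$ grouping indices into singletons and unordered pairs with matching dimensions $d_i=d_j$, together with some auxiliary parameters. For each double coset $x$, Frobenius reciprocity combined with the Bernstein--Zelevinsky geometric lemma expresses the corresponding constituent of $\Hom_{H_\GLnn}(\pi,\C)$ in terms of $\Hom$-spaces between Jacquet modules of $\bigotimes_i\Delta_i$ twisted by a character read off from $x$.

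The heart of the argument is then to establish two things: a paired block $\{i,j\}$ can support a non-zero contribution only when $\Delta_j\cong\d{\Delta_i}$, producing a factor $\sigma\times\d\sigma$; and a singleton block $\{i\}$ can support a non-zero contribution only when $\Delta_i$ is itself $H_{\GL_{d_i}}$-distinguished, which by Proposition~\ref{prop: metasqr}, together with the fact that triviality of the central character of $\pi$ promotes each such singleton factor from essentially square integrable to honestly square integrable, is equivalent to $d_i$ being even and $L(0,\Delta_i,\wedge^2)=\infty$. I expect the main obstacle to be the vanishing of contributions from the remaining double cosets: these give rise to Hom-spaces between twisted Jacquet modules against characters which, when non-trivial on the relevant unipotent subgroup, must be shown to vanish by either a Bernstein-type argument on $H$-equivariant distributions or an explicit derivative computation \`a la Bernstein--Zelevinsky, applied inductively on $m$. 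A further combinatorial subtlety that the induction must absorb is the handling of coincidences among the $\Delta_i$, where multiple matchings of paired blocks are a priori possible and one must show that any realization contributes only to the prescribed form.
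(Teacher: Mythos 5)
The paper does not prove this statement at all: it is quoted as a classification result of Matringe, with the citation \cite{1301.0350} standing in for the proof, so your proposal must be judged as an independent argument rather than against an internal proof. Your easy inclusion is fine and mirrors how the paper itself uses the surrounding results: the second part of Lemma~\ref{lem: simpleind} handles the pairs $\sigma_i\times\d{\sigma_i}$, Proposition~\ref{prop: metasqr} handles the square-integrable $\tau_j$ with $L(0,\tau_j,\wedge^2)=\infty$, and the first part of Lemma~\ref{lem: simpleind} assembles the product (irreducibility of each $\sigma_i\times\d{\sigma_i}$ being automatic from irreducibility of the full product). The converse inclusion, however, is the actual content of Matringe's theorem, and your sketch stops where the work begins. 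The stratification of $\new P\bs\GLnn/H_\GLnn$ (with $\new P$ the standard parabolic attached to $(d_1,\dots,d_m)$) by involutions, and the translation into Hom-spaces of twisted Jacquet modules, is indeed the standard framework going back to Jacquet--Rallis, but the decisive steps --- the vanishing of the contributions of the intermediate orbits, whose stabilizers meet the unipotent radical and whose Hom-spaces involve proper Jacquet modules of the $\Delta_i$ shifted by modulus characters, the induction on smaller groups strong enough to control distinction of those Jacquet modules (themselves products of shifted essentially square-integrable pieces), and the bookkeeping when the $\Delta_i$ occur with multiplicities --- are only announced, not carried out. As written, the necessity direction is a plan, not a proof.

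There is also one concrete misstep. You assert that triviality of the central character of $\pi$ ``promotes'' each singleton factor from essentially square-integrable to square-integrable. It does not: $\omega_\pi=1$ only constrains the product of the central characters of the $\Delta_i$, and a priori a singleton block could carry a nontrivial central character compensated by other factors. What actually forces a singleton $\Delta_i$ to be square-integrable is the output of the orbit analysis itself: if $\Delta_i$ is distinguished by $\GL_{d_i/2}\times\GL_{d_i/2}$, then, since the center of $\GL_{d_i}$ lies in that subgroup, $\omega_{\Delta_i}=1$, and an essentially square-integrable representation with unitary central character is square-integrable; only at that point does Proposition~\ref{prop: metasqr} convert distinction of $\Delta_i$ into $L(0,\Delta_i,\wedge^2)=\infty$. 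So the order of deductions in your singleton step must be reversed, and the remaining gaps are precisely those filled by \cite{1301.0350}.
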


(We expect the same result to hold in the archimedean case as well.
For an analogous result in a slightly different setup see \cite[Appendix B]{MR2930996}.)
Theorem \ref{thm: genmetaeps} now follows from Theorem \ref{thm: genmetaclassification},
Lemma \ref{lem: simpleind} and Proposition \ref{prop: metasqr}.

\begin{remark}
In fact one expects that $\Irr_{\gen,\shal}\GLnn=\Irr_{\gen,\meta}\GLnn$
(which is stronger than Theorem \ref{thm: genmetaeps}).
As was pointed to us by Wee Teck Gan, this can be proved
using the theta correspondence for the pair $(\GL_{2n}, \GL_{2n})$.
(This approach also gives a different proof for the fact that $\Irr_{\shal}\GLnn\subset\Irr_{\meta}\GLnn$.)
We omit the details since we will not be using this result.
\end{remark}

\subsection{Side remarks}\footnote{This subsection will not be used in the rest of the paper.} \label{rem: existmetacusp}
We may also consider representations of metapletic type of $\GLnn$ over a finite field $\F_q$.
(In this case, the space of $H_{\GLnn}$-invariant forms is not one dimensional in general.)
Let $A$ be an \'etale algebra of dimension $2n$ over $\F_q$, so that $T_A=A^*$ is the $\F_q$-points of a maximal torus of
$\GLnn$ over $\F_q$. Let $\lambda:T_A\rightarrow\C^*$ be a character in general position, i.e.~$\sigma(\lambda)\ne\lambda$
for any non-trivial automorphism $\sigma$ of $A$ over $\F_q$ (or equivalently a non-trivial element of the normalizer of
$T_A$ in $\GLnn$).
Let $R(A,\lambda)$ be the corresponding irreducible representation of $\GLnn(\F_q)$, which is up to a sign
the Deligne--Lusztig virtual representation $R_{T_A}^\lambda$ attached to the pair $(T_A,\lambda)$ \cite{MR0393266}.
Then it follows from \cite[Theorem 10.3]{MR1106911} that $R(A,\lambda)$ is of metaplectic type if and only if
there exists an involution $\theta$ of $A$ over $\F_q$ such that the fixed point subalgebra $A^\theta$ is of dimension $n$ over $\F_q$
and the restriction of $\lambda$ to $(A^\theta)^*$ is trivial. In particular, a cuspidal representation of $\GLnn(\F_q)$ is of metaplectic
type if and only if it is of the form $R(\F_{q^{2n}},\lambda)$ where $\lambda\rest_{\F_{q^n}^*}\equiv1$
(and of course $\lambda^{q^i}\not\equiv\lambda$ for $0<i<2n$).
Hence, cuspidal representations of $\GLnn(\F_q)$ of metaplectic type exist (e.g., one can take a character $\lambda$ of
$\F_{q^{2n}}^*$ of order $q^n+1$).

If $F$ is a $p$-adic field with residue field $\F_q$ and we inflate a cuspidal representation of $\GLnn(\F_q)$ of metaplectic type to
$Z_{\GLnn}(F)\cdot\GLnn(\OO)$ and then induce to $\GLnn(F)$ then it is easy to see that we get a supercuspidal representation of metaplectic type.
(For a much more general analysis of supercuspidal distinguished representations see \cite{MR2431732}.)

Suppose now that $F$ is $p$-adic.

In general, if $H_0$ is the fixed point subgroup of an involution of a group $G_0$
(all defined over $F$) then we say that a (not necessarily irreducible) representation $\pi$ of $G_0$ is
\emph{$H_0$-relatively cuspidal} if $\pi$ is $H_0$-distinguished, i.e. it admits a non-trivial $H_0$-invariant functional,
and for every such functional $\ell$ the function $g\mapsto\ell(\pi(g)v)$ is compactly supported modulo $H_0Z_{G_0}$ for all $v\in\pi$.

Consider the class $\Irr_{\mcusp}\GLnn$ of representations of the form
\begin{equation} \label{eq: pi}
\pi=\tau_1\times\dots\times\tau_k
\end{equation}
where $\tau_i\in\Irr_{\cusp,\meta}\GL_{2n_i}$, $i=1,\dots,k$ are pairwise inequivalent and $n=n_1+\dots+n_k$.
By Lemma \ref{lem: simpleind} we have $\Irr_{\mcusp}\GLnn\subset\Irr_{\meta}\GLnn$.

\begin{lemma} \label{lem: relsc}
The representations in $\Irr_{\mcusp}\GLnn$ are $H_\GLnn$-relatively cuspidal.
\end{lemma}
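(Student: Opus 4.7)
The plan is to exploit the uniqueness (up to scalar) of the $H_\GLnn$-invariant functional on $\pi$, recalled earlier in the section, to reduce to analyzing a single explicit functional. Namely, writing $\pi=\Ind_{\new P}^{\GLnn}(\tau)$ with $\tau=\tau_1\otimes\cdots\otimes\tau_k$, I would work with the functional $\lambda$ constructed in the proof of Lemma \ref{lem: simpleind}:
\[
\lambda(\varphi)=\int_{(\new P\cap H_\GLnn)\bs H_\GLnn}\beta(\varphi(h))\,dh,
\]
where $\beta=\beta_1\otimes\cdots\otimes\beta_k$ assembles the $H_{\GL_{2n_i}}$-invariant functionals on the cuspidals $\tau_i$. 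It then suffices to show that for every $\varphi$ in the space of $\pi$, the function $g\mapsto\lambda(\pi(g)\varphi)$ is compactly supported modulo $H_\GLnn Z_{\GLnn}$.

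First I would fix a finite set of representatives $\{x_\alpha\}$ for the double cosets $\new P\bs \GLnn/H_\GLnn$, which is classical for the symmetric pair $(\GLnn,H_\GLnn)$. Writing $g=p\,x_\alpha\,h$ and unfolding $\lambda(\pi(g)\varphi)$ along this decomposition, the relative matrix coefficient splits as a sum of local contributions, one per orbit, each expressible as an integral of $\beta$ against $\varphi$ along the stabilizer $\new M\cap x_\alpha H_\GLnn x_\alpha^{-1}$. On the \emph{open} orbit this stabilizer equals $H_{\new M}=\prod_i(\GL_{n_i}\times\GL_{n_i})$, so the contribution factors as a product of relative matrix coefficients of the individual cuspidals $\tau_i$ for the pair $(\GL_{2n_i},\GL_{n_i}\times\GL_{n_i})$. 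Since each $\tau_i$ is supercuspidal, its ordinary matrix coefficients are compactly supported modulo the center; the same then holds for the relative ones, and their product yields the required compact support.

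The heart of the argument is to show that the remaining non-open orbits contribute zero to $\lambda$. For such a representative $x_\alpha$, the Levi intersection $\new M\cap x_\alpha H_\GLnn x_\alpha^{-1}$ either (i) has a nontrivial unipotent part projecting onto the unipotent radical of a proper parabolic of some block $\GL_{2n_i}$, in which case supercuspidality of $\tau_i$ forces the inner integration to vanish, or (ii) identifies two distinct blocks $\GL_{2n_i}$ and $\GL_{2n_j}$ in a way that would require a matching between $\tau_i$ and $\tau_j$, which is ruled out by pairwise inequivalence. Carrying out this combinatorial case analysis and verifying the vanishing in each case is the principal technical step.

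The main obstacle is precisely this systematic treatment of the non-open $(\new P,H_\GLnn)$ double cosets: there is no single uniform argument, and one must combine supercuspidality of each $\tau_i$ with the pairwise inequivalence to dispose of, respectively, the ``proper parabolic'' orbits and the ``block exchange'' orbits. A potentially cleaner alternative that I would pursue in parallel is the Kato--Takano criterion: $\pi$ is $H_\GLnn$-relatively cuspidal iff for every proper $\theta$-split parabolic $Q$ (with $\theta$ the involution fixing $H_\GLnn$) no exponent of the Jacquet module $\pi_{N_Q}$ is trivial on $A_Q\cap H_\GLnn$. The cuspidal support $\{\tau_i\}$ of $\pi$ combined with the Bernstein--Zelevinsky geometric lemma computes $\pi_{N_Q}$ explicitly, and the required vanishing then reduces to a central-character check which uses the pairwise inequivalence and the relative cuspidality of each $\tau_i\in\Irr_{\cusp,\meta}\GL_{2n_i}$ (the latter being the base case of the statement, where $k=1$ and $\pi$ is itself supercuspidal).
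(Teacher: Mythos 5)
Your first (main) approach and the paper's proof diverge, but the ``cleaner alternative'' you sketch at the end is in fact the route the paper takes.

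The paper conjugates $H_\GLnn$ to the fixed points $H'_\GLnn$ of $\theta^{\wnn}\colon g\mapsto\wnn g\wnn$, then applies \cite[Theorem~B and Lemma~2.5]{MR2428854}: relative cuspidality follows once one shows that for every proper standard $\theta^{\wnn}$-split parabolic $P^\circ=M^\circ\ltimes U^\circ$ (so $M^\circ=\diag(\GL_{n_1},\dots,\GL_{n_r})$ with $n_i=n_{r+1-i}$ and $r>1$), the Jacquet module $J_{P^\circ}(\pi)$ is not $M^\circ\cap H'_\GLnn$-distinguished. By the geometric lemma and supercuspidality of the $\tau_i$, every irreducible subquotient of $J_{P^\circ}(\pi)$ has the form $\rho_1\otimes\dots\otimes\rho_r$ where each $\rho_j$ is the product of the $\tau_i$ with $i$ ranging over the fibre $\sigma^{-1}(j)$ of some function $\sigma\colon\{1,\dots,k\}\to\{1,\dots,r\}$. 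Distinguishedness would force $\rho_1\cong\d{\rho_r}$; but $\sigma^{-1}(1)$ and $\sigma^{-1}(r)$ are disjoint nonempty subsets, the $\tau_i$ are pairwise inequivalent and self-dual, so the cuspidal supports of $\rho_1$ and $\d{\rho_r}\cong\rho_r$ are disjoint and $\rho_1\not\cong\d{\rho_r}$. Note the criterion you state (``no exponent of $\pi_{N_Q}$ trivial on $A_Q\cap H$'') is not quite the one the paper uses; the Kato--Takano relative-cuspidality criterion is phrased as non-distinguishedness of the Jacquet module by $M^\circ\cap H'$, which is what the above argument verifies directly. Otherwise, your alternative route, including the reliance on pairwise inequivalence plus self-duality to rule out the matching of the end blocks, matches the paper.

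Your primary approach via $\new P\bs\GLnn/H_\GLnn$ double cosets is a genuinely different (more hands-on) strategy, but as written it has a conceptual gap: for a \emph{fixed} $g$ there is only one double coset, so $\lambda(\pi(g)\varphi)$ does not ``split as a sum over orbits''; the Mackey-theoretic unfolding along $(\new P,H_\GLnn)$-orbits is the right tool for constructing/classifying $H_\GLnn$-invariant functionals, not for decomposing a single relative matrix coefficient. To carry out your direct plan one would instead need a Cartan-type decomposition of the symmetric space and growth estimates on $\lambda(\pi(\cdot)\varphi)$ along the $\theta$-split torus, which is precisely what Kato--Takano systematize. So the ``parallel'' alternative you mention is not merely cleaner; it is the one that actually closes, and it is the paper's proof.
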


\begin{proof}
We use the results of \cite{MR2428854}.
It will be more convenient to consider the involution $\theta^{\wnn}: g\mapsto \wnn g \wnn$.
Let $H_\GLnn'$ be the fixed point subgroup of $\theta^{\wnn}$ in $\GLnn$.
Note that $\wnn$ is conjugate to $E$ in $\GLnn$ and therefore $H_\GLnn'$ is conjugate to $H_\GLnn$.
Thus, we need to show that any representation in $\Irr_{\mcusp}\GLnn$ is $H_\GLnn'$-relatively cuspidal.

In the notation of \cite{MR2428854} we take $S_0=\{\diag(t_1,\dots,t_n,t_n^{-1},\dots,t_1^{-1})\}$
(a maximal $\theta^{\wnn}$-split torus), $A_\emptyset=T_{\GLnn}$, $P_\emptyset=B_{\GLnn}$.

Let $\pi\in\Irr_{\mcusp}\GLnn$.
By \cite[Theorem~B and Lemma~2.5]{MR2428854}, it suffices to show that for any proper standard $\theta^{\wnn}$-split parabolic subgroup
$P^\circ=M^\circ\ltimes U^\circ$ of $\GLnn$, the Jacquet module $J_{P^\circ}(\pi)$ of $\pi$ with respect to $P^\circ$ is not $M^\circ\cap H_\GLnn'$ distinguished.
(By definition, a parabolic subgroup $P^\circ$ is $\theta^{\wnn}$-split if $\theta^{\wnn}(P^\circ)$ and $P^\circ$ are opposite.)
Note that for such $P^\circ$ we have $M^\circ=\diag(\GL_{n_1},\ldots,\GL_{n_r})$ with $n_i=n_{r+1-i}$ for all $i$ with $r>1$.
Let $\rho=\rho_1\otimes\dots\otimes\rho_r$ be an irreducible subquotient of $J_{P^\circ}(\pi)$. Then
there exists a function $\sigma:\{1,\dots,k\}\rightarrow\{1,\dots,r\}$ such that for any $j$,
$\rho_j=\tau_{i_1}\times\dots\times\tau_{i_l}$ where $\{i_1,\dots,i_l\}=\sigma^{-1}(j)$. Since the $\tau_i$'s are distinct and self-dual,
it is clear that we cannot have $\rho_1\cong \d{\rho_r}$. In particular, $\rho$ cannot be $M^\circ\cap H_\GLnn'$ distinguished.
Since this is true for any irreducible submodule of $J_{P^\circ}(\pi)$ we conclude that $J_{P^\circ}(\pi)$ is not $M^\circ\cap H_\GLnn'$ distinguished.
The Lemma follows.
\end{proof}

As was pointed to us by Yiannis Sakellaridis, the Steinberg representation of $\GL_2$ is also
relatively cuspidal with respect to the diagonal torus.

\begin{remark}
We already remarked before that $\Irr_{\meta,\cusp}\GLnn$ is non-empty.
However, it is even easier to show that $\Irr_{\mcusp}\GLnn$ is non-empty, since
$\Irr_{\meta,\cusp}\GL_2=\Irr_{\cusp}\PGL_2$.
\end{remark}

\subsection{Global setting}
Let now $F$ be a number field.
We say that $\pi\in\Cusp\GLnn$ is of \emph{metaplectic type} if
\[
\int_{H_\GLnn(F)\bs H_\GLnn(\A)\cap\GLnn(\A)^1}\varphi(h)\ dh\ne0
\]
for some $\varphi$ in the space of $\pi$.
Equivalently, $L^S(\frac12,\pi)\res_{s=1}L^S(s,\pi,\wedge^2)\ne0$ (\cite[Theorem 4.1]{MR1241129}).\footnote{We can
replace the partial $L$-function by the completed one since the local factors are holomorphic and non-zero.}
In particular, $\pi$ is self-dual and admits a trivial central character.
We write $\Cusp_{\meta}\GLnn$ for the set of irreducible cuspidal representations of metaplectic type.

Recall also that by the results of Jacquet--Shalika, $L^S(s,\pi,\wedge^2)$ has a pole at $s=1$
if and only if the global Shalika functional
\[
\int_{\Shal(F)\bs\Shal(\A)}\varphi(s)\psi_{\Shal}(s)^{-1}\ ds
\]
does not vanish identically on the space of $\pi$ \cite{MR1044830}. Therefore, any local component of
such $\pi$ is of Shalika type. In particular, this is the case if $\pi\in\Cusp_{\meta}\GLnn$.

We conclude

\begin{lemma}
For any $\pi\in\Cusp_{\meta}\GLnn$ we have $\pi_v\in\Irr_{\meta,\eps}\GLnn_v$ for all $v$.
\end{lemma}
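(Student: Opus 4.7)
The plan is to simply chain together the results recalled in the preceding paragraph with the inclusion $\Irr_{\shal}\GLnn\subset\Irr_{\meta,\eps}\GLnn$ noted earlier in the Shalika functionals subsection. The proof is essentially a bookkeeping exercise.

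First I would unwind the definition: the hypothesis $\pi\in\Cusp_{\meta}\GLnn$ is equivalent, by \cite[Theorem 4.1]{MR1241129}, to the nonvanishing of $L^S(\tfrac12,\pi)\cdot\res_{s=1}L^S(s,\pi,\wedge^2)$. In particular the partial exterior square $L$-function has a (necessarily simple) pole at $s=1$. By the Jacquet--Shalika result cited just before the lemma, this pole is equivalent to the non-identical vanishing of the global Shalika period
\[
\varphi\mapsto\int_{\Shal(F)\bs\Shal(\A)}\varphi(s)\psi_{\Shal}(s)^{-1}\,ds
\]
on the space of $\pi$.

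Next I would factor this global Shalika functional as an Euler product of local Shalika functionals on the $\pi_v$'s, which is legitimate by the uniqueness of local Shalika functionals (\cite{MR1394521, MR2552255}). The non-vanishing of the global period therefore forces every local factor to be non-zero, so each $\pi_v$ carries a non-trivial $(\Shal,\psi_{\Shal})$-equivariant continuous functional, that is, $\pi_v\in\Irr_{\shal}\GLnn_v$ for every place $v$.

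Finally, I would invoke the inclusion $\Irr_{\shal}\GLnn\subset\Irr_{\meta,\eps}\GLnn$ already recorded in the excerpt (which for the $p$-adic case follows from the functional equation in \cite[Proposition 3.3]{MR1241129} and in the archimedean case from \cite{MR2552255}) to conclude that $\pi_v\in\Irr_{\meta,\eps}\GLnn_v$ for all $v$. There is no real obstacle here; the only point worth highlighting is that one must be careful that the local Shalika functionals patch into the global one, but this is standard given the uniqueness result.
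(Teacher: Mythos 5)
Your proposal is correct and follows essentially the same path as the paper: nonvanishing of $L^S(\tfrac12,\pi)\cdot\res_{s=1}L^S(s,\pi,\wedge^2)$ gives a pole of $L^S(s,\pi,\wedge^2)$ at $s=1$, Jacquet--Shalika then gives nonvanishing of the global Shalika period, local uniqueness forces each $\pi_v$ to have a nonzero local Shalika functional, and the inclusion $\Irr_{\shal}\GLnn\subset\Irr_{\meta,\eps}\GLnn$ finishes. The paper states the local consequence more tersely (``any local component of such $\pi$ is of Shalika type''), but the underlying reasoning is identical to yours.
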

Of course, in the $p$-adic case the lemma is covered by Theorem \ref{thm: genmetaeps},
but it comes by a different reasoning.

\section{Explicit local descent} \label{sec: local conjecture}

We go back to the local setup.
Let $\pi\in\Irr_{\gen}\Levi$ with Whittaker model $\WhitML(\pi)$.
For any $g\in G$ define $\wgt{g}$ by $\wgt{u\levi(m)k}=\abs{\det m}$
for any $u\in U$, $m\in\GLnn$, $k\in K$. For any $f\in C^\infty(G)$ and $s\in\C$ define $f_s(g)=f(g)\wgt{g}^s$, $g\in G$.
Let $\Ind(\WhitML(\pi))$ be the space of $G$-smooth left $U$-invariant functions $W:G\rightarrow\C$ such that
for all $g\in G$, the function $\modulus_P(m)^{-\frac12}W(mg)$ on $\Levi$ belongs to $\WhitML(\pi)$.
For any $s\in\C$ we have a representation $\Ind(\WhitML(\pi),s)$ on the space $\Ind(\WhitML(\pi))$ given by
$(I(s,g)W)_s(x)=W_s(xg)$, $x,g\in G$.

Define the intertwining operator $M(\pi,s)=M(s):\Ind(\WhitML(\pi),s)\rightarrow\Ind(\WhitML(\d\pi),-s)$ by
(the analytic continuation of)
\begin{equation} \label{eq: defM}
M(s)W(g)=\wgt{g}^s\int_U W_s(\levi(\spclt) w_U ug)\,du
\end{equation}
where $\spclt=E$ is introduced in order to preserve the character $\psi_{N_\Levi}$.
By abuse of notation we will also denote by $M(\pi,s)$ the intertwining operator
$\Ind(\WhitMLd(\pi),s)\rightarrow\Ind(\WhitMLd(\d\pi),-s)$ defined in the same way.

In the case where $F$ is $p$-adic with $p$ odd and $\pi$ and $\psi$ are unramified we have
\begin{equation} \label{eq: unramwhit}
M(s)W^\circ=\frac{L(s,\pi)}{L(s+1,\pi)}\frac{L(2s,\pi,\wedge^2)}{L(2s+1,\pi,\wedge^2)}W'^\circ
\end{equation}
for the $K$-fixed elements $W^\circ\in\Ind(\WhitML(\pi))$, $W'^\circ\in\Ind(\WhitML(\d\pi))$ such that
$W^\circ(e)=W'^\circ(e)=1$.

\subsection{}

Next we prove the following
\begin{proposition} \label{prop: M1/2}
Suppose that $\pi\in\Irr_{\gen}\GL_m$ is self-dual. Then $M(\pi,s)$ is holomorphic at $s=\frac12$.
\end{proposition}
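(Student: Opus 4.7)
The plan is to reduce the holomorphy of $M(\pi,s)$ at $s=\tfrac12$ to holomorphy statements for intertwining operators attached to the essentially square-integrable pieces in the Langlands data of $\pi$, and then to control the remaining $L$-factor normalizations using self-duality.

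First I would write $\pi$ as the unique irreducible generic subquotient of a standard module $\sigma_1\times\dots\times\sigma_k$ on $\GL_m$ with each $\sigma_i$ essentially square-integrable, listed in decreasing exponent order. Self-duality forces the multiset $\{\sigma_1,\dots,\sigma_k\}$ to be invariant under $\sigma\mapsto\d\sigma$, so after reordering the pieces decompose into dual pairs $\{\sigma_i,\d{\sigma_i}\}$ with $\sigma_i$ of strictly positive exponent, together with self-dual (hence square-integrable) pieces $\tau_j$.

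Next I would apply Shahidi's multiplicativity of intertwining operators to factor $M(\pi,s)$ through the analogous operator on the larger induction from $\otimes_i\sigma_i$, and choose a reduced expression for $w_U$ to realize that larger operator as a composition of rank-one intertwining operators. These come in two flavors: $\GL$-type operators pairing two pieces $\sigma_i,\sigma_j$, and Siegel-type operators acting on a single self-dual piece. By meromorphic continuation of \eqref{eq: unramwhit} and its $\GL$-analogue, each rank-one factor is the product of a normalized intertwining operator -- holomorphic and non-vanishing on the unitary axis by Shahidi's theorem -- and an explicit scalar ratio of local $L$-factors attached to the piece or pair in question. It then remains to check that the product of these scalar ratios has no pole at $s=\tfrac12$. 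For a $\GL$-type factor pairing $\sigma_i$ of positive exponent with $\d{\sigma_i}$, the exponent shift pushes the relevant Rankin--Selberg $L$-factor off the critical point. For a Siegel-type factor on a self-dual square-integrable piece $\tau_j$, the only serious concern is a pole of $L(2s,\tau_j,\wedge^2)$ at $s=\tfrac12$, i.e.\ when $\tau_j$ is of metaplectic type.

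The main obstacle is precisely this case. Here $I(\tau_j,\tfrac12)$ is reducible (Proposition~\ref{prop: metasqr}), so an apparent pole of $M(\tau_j,s)$ at $s=\tfrac12$ cannot be ruled out by invertibility of the target. The resolution is that the normalized intertwining operator, restricted to the unique generic irreducible subquotient on each side, stays holomorphic and non-vanishing by Shahidi's analysis of local coefficients for tempered generic representations; the compensating zero of the normalizing factor at $s=\tfrac12$ then kills the apparent pole in the unnormalized operator, and multiplicativity delivers the holomorphy of $M(\pi,s)$ at $s=\tfrac12$.
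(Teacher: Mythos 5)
Your overall framework — write $\pi=\sigma_1\times\dots\times\sigma_k$ with the $\sigma_i$ essentially square-integrable and factor $M(\pi,s)$ into rank-one $\GL$-type and Siegel-type intertwining operators — matches the paper. Note however that since $\pi$ is generic, it equals the full induced representation $\sigma_1\times\dots\times\sigma_k$, so there is no need to introduce the standard module and pass to its generic subquotient. But your handling of the Siegel-type factors on the self-dual square-integrable pieces has a genuine error.

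You identify the ``serious concern'' as a pole of $L(2s,\tau_j,\wedge^2)$ at $s=\frac12$ when $\tau_j$ is of metaplectic type. But for $\tau_j$ of metaplectic type the pole of $L(s,\tau_j,\wedge^2)$ is at $s=0$ (this is the \emph{definition} in Proposition~\ref{prop: metasqr}), so $L(2s,\tau_j,\wedge^2)\big|_{s=1/2}=L(1,\tau_j,\wedge^2)$ is finite and non-zero. The normalizing factor $\frac{L(s,\tau_j)L(2s,\tau_j,\wedge^2)}{L(s+1,\tau_j)L(2s+1,\tau_j,\wedge^2)}$ of \eqref{eq: unramwhit} therefore has neither a pole nor a zero at $s=\frac12$ for a unitary $\tau_j$. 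Consequently there is no ``compensating zero'' and nothing for it to cancel; the mechanism you describe in the final paragraph does not exist. Moreover, Shahidi's holomorphy and non-vanishing of the normalized operator, which you invoke, is a statement on the unitary axis $\Re s=0$, not at $s=\frac12$; and the reducibility of $I(\tau_j,\frac12)$ is (as you yourself observe) a red herring, since reducibility of the induced module forces a kernel of $M(\tau_j,\frac12)$, not a pole.

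What the paper actually does (Lemma~\ref{lem: sqr-int inter}) is show directly, by a case-by-case decomposition of $M(\Sigma,s)$ into supercuspidal rank-one pieces (and an explicit computation in the archimedean case), that the poles of $M(\sigma,s)$ for $\sigma$ square-integrable are contained in those of $L(2s,\sigma\otimes\sigma)$; in particular a pole at $s$ would force $\sigma\abs{\det}^{s-\frac12}\times\d\sigma\abs{\det}^{\frac12-s}$ to be reducible. Applied at $s=\frac12$ this says $\sigma_i\times\d\sigma_i$ is reducible, which contradicts the irreducibility of $\pi$ (together with self-duality). The $\GL$-type factors are handled by the companion Lemma~\ref{lem: irred GL_n} in exactly the same ``pole $\Rightarrow$ reducibility $\Rightarrow$ contradiction'' pattern. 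This is cleaner than the normalization bookkeeping you attempt, and it avoids any appeal to properties of normalized operators away from the unitary axis. I would suggest you replace your last paragraph with a proof (or citation) of the pole-location statement for $M(\sigma,s)$, $\sigma$ square-integrable, and then close via irreducibility of $\sigma_i\times\d\sigma_i$ and $\sigma_i\times\d\sigma_j$.
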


We will need a few auxiliary results.
The first one is a well-known irreducibility result for $\GL_m$.
It is a special case of \cite[Theorem 5.1 and Proposition 5.3]{MR1634020} (cf.~also \cite[\S I.4]{MR1026752} and the references therein).

\begin{lemma} \label{lem: irred GL_n}
Let $\pi_i\in\Irr_{\sqr}\GL_{n_i}$, $i=1,2$. Suppose that the standard intertwining operator
\[
\pi_1\abs{\det\cdot}^{s_1}\times\pi_2\abs{\det\cdot}^{s_2}\rightarrow\pi_2\abs{\det\cdot}^{s_2}\times\pi_1\abs{\det\cdot}^{s_1}
\]
has a pole at $(s_1,s_2)$. Then $L(s_1-s_2,\pi_1\otimes\d\pi_2)=\infty$ and therefore
$\pi_1\abs{\det\cdot}^{s_1-\frac12}\times\pi_2\abs{\det\cdot}^{s_2+\frac12}$ is reducible.
If moreover $\pi_1$, $\pi_2$ are supercuspidal then $\pi_1\abs{\det\cdot}^{s_1}=\pi_2\abs{\det\cdot}^{s_2}$.
\end{lemma}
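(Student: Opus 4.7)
The plan is to reduce the statement to the theory of normalized intertwining operators for $\GL_m$, which pinpoints all poles of $M(s_1,s_2)$ in terms of Rankin--Selberg $L$-factors, and then to translate pole locations of $L(\cdot,\pi_1\otimes\d{\pi_2})$ into reducibility of induced modules via the Zelevinsky classification. A key simplification is that the operator in question depends (up to a holomorphic twist) only on the difference $s=s_1-s_2$, so I may as well work with one complex variable.

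First I would introduce the Shahidi-normalized operator
\[
\tilde M(s)=\frac{L(1+s,\pi_1\otimes\d{\pi_2})}{L(s,\pi_1\otimes\d{\pi_2})\,\varepsilon(s,\pi_1\otimes\d{\pi_2},\psi)}\,M(s_1,s_2),
\]
and recall that for $\pi_1,\pi_2$ square-integrable, $\tilde M(s)$ is entire and nowhere vanishing; this is a standard consequence of Shahidi's theory of local coefficients for $\GL_{n_1+n_2}$ combined with the multiplicativity of $\gamma$-factors (and is valid in both the $p$-adic and archimedean settings). Consequently every pole of $M(s_1,s_2)$ at $(s_1,s_2)$ must be produced by a zero of the normalization, i.e.\ by a pole of $L(s,\pi_1\otimes\d{\pi_2})$ at $s=s_1-s_2$. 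This yields the first assertion.

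Next I would convert the pole of the Rankin--Selberg $L$-factor into a reducibility statement. Writing each $\pi_i$ in Zelevinsky form as a generalized Steinberg $\operatorname{St}(\rho_i,d_i)$ (where $\rho_i$ is unitary supercuspidal), the factor $L(s,\pi_1\otimes\d{\pi_2})$ has a pole at $s=s_1-s_2$ precisely when two Zelevinsky segments are aligned at a particular endpoint. A direct comparison of the segment-endpoint condition for poles of $L$ with the Bernstein--Zelevinsky linkage criterion for reducibility shows that this alignment is equivalent to the linkage of the segments underlying $\pi_1\abs{\det\cdot}^{s_1-\frac12}$ and $\pi_2\abs{\det\cdot}^{s_2+\frac12}$; the shift by $(-\tfrac12,+\tfrac12)$ is exactly the difference between the ``pole position'' of $L$ and the ``linking position'' for reducibility of a product of two segments. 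This gives the second conclusion.

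For the supercuspidal case, $L(s,\pi_1\otimes\d{\pi_2})$ reduces to $L(s,\pi_1\otimes\d{\pi_2})$ in the sense of Godement--Jacquet, which has a pole at $s=s_0$ if and only if $n_1=n_2$ and $\pi_1\cong\pi_2\abs{\det\cdot}^{s_2-s_1}$; applied with $s_0=s_1-s_2$ this yields $\pi_1\abs{\det\cdot}^{s_1}=\pi_2\abs{\det\cdot}^{s_2}$, completing the proof. The main technical point in the plan is the holomorphy and non-vanishing of $\tilde M(s)$ for tempered inducing data on the whole complex plane (not just in a half-plane of convergence); this rests on Shahidi's complementary-series analysis together with the Casselman--Shahidi irreducibility criteria, but for $\GL$ it is by now entirely standard and can be invoked directly from the cited references.
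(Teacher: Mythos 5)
Your reduction hinges on the claim that for square-integrable data the normalized operator $\tilde M(s)=\frac{L(1+s,\pi_1\otimes\d{\pi_2})}{L(s,\pi_1\otimes\d{\pi_2})\,\epsilon(s,\pi_1\otimes\d{\pi_2},\psi)}M(s_1,s_2)$ is entire and nowhere vanishing, and that claim is false; it is precisely where the content of the lemma sits. What the standard references (Shahidi's local coefficients, Arthur's normalization, Moeglin--Waldspurger) give for tempered inducing data is holomorphy and non-vanishing of $\tilde M(s)$ in the half-plane $\Re(s_1-s_2)\ge 0$, not on all of $\C$. Concretely, take $n_1=n_2=1$, $\pi_1=\pi_2=\mathbf{1}$ the trivial character of $F^*$, and $s_1-s_2=-1$ ($p$-adic $F$, unramified $\psi$). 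Then $M(s_1,s_2)$ is holomorphic and non-zero there: evaluating the intertwining integral at the identity on the Iwahori-fixed function supported on $BI$ gives $(1-q^{-1})\frac{q^{-s}}{1-q^{-s}}=-1$ at $s=-1$ (the operator merely kills the one-dimensional subrepresentation of $\abs{\cdot}^{-1/2}\times\abs{\cdot}^{1/2}$). Meanwhile $L(1+s,\pi_1\otimes\d{\pi_2})=\zeta_F(0)=\infty$, so $\tilde M$ has a pole at $s=-1$. Hence your argument establishes ``pole of $M$ $\Rightarrow$ pole of $L(s_1-s_2,\pi_1\otimes\d{\pi_2})$'' only for $\Re(s_1-s_2)\ge0$ (and for that implication only holomorphy of $\tilde M$ matters, since local $L$-factors never vanish); the lemma is asserted for arbitrary $(s_1,s_2)$, and the left half-plane is exactly the range where one needs the finer results the paper invokes. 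Indeed the paper does not reprove the lemma at all: it cites Casselman--Shahidi (Theorem 5.1 and Proposition 5.3) and Moeglin--Waldspurger \S I.4, which control the poles of the unnormalized operator in the region your normalization argument cannot reach.

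The remainder of your plan is essentially sound: for discrete-series data a pole of $L(s,\pi_1\otimes\d{\pi_2})$ at $s_1-s_2$ forces the segments underlying $\pi_1\abs{\det\cdot}^{s_1-\frac12}$ and $\pi_2\abs{\det\cdot}^{s_2+\frac12}$ to be linked, whence reducibility by Bernstein--Zelevinsky (you only need this implication, not the ``equivalence'' you assert), and the supercuspidal case follows from the standard description of the poles of $L(s,\pi_1\otimes\d{\pi_2})$ for supercuspidal pairs. To repair the main step you could restrict the normalized-operator claim to $\Re(s_1-s_2)\ge0$ and try to treat the other half-plane via $M_{\bar w}(-s)\circ M_w(s)=\mu(s)^{-1}$ with $\mu$ the Harish-Chandra $\mu$-function (a ratio of $\gamma$-factors), but note that $M_{\bar w}(-s)$ may have a kernel there, so a pole of $M_w$ need not survive in the composition; making this rigorous essentially reproduces the Casselman--Shahidi analysis, and the cleanest fix is simply to quote their results, as the paper does.
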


\begin{lemma} \label{lem: sqr-int inter}
Let $\sigma\in\Irr_{\sqr}\GL_m$.
Then the poles of $M(\sigma,s)$ are contained in those of $L(2s,\sigma\otimes\sigma)$.
Thus if $s$ is a pole then $\sigma\abs{\det\cdot}^{s-\frac12}\times\d\sigma\abs{\det\cdot}^{\frac12-s}$
is reducible.
\end{lemma}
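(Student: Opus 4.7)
The plan is to apply Shahidi's normalization of the intertwining operator. For the Siegel parabolic $P = \Levi \ltimes U$ in the symplectic group with $\Levi \cong \GL_m$, the adjoint action of $\Levi$ on the Lie algebra of $U$ is an isomorphic copy of the symmetric square of the standard representation, so the Langlands--Shahidi factorization takes the form
\[
M(\sigma, s) \;=\; \frac{L(2s, \sigma, \sym^2)}{L(1+2s, \sigma, \sym^2)\,\epsilon(2s, \sigma, \sym^2, \psi)}\; N(\sigma, s),
\]
with $N(\sigma, s)$ the normalized operator, which by the Casselman--Shahidi holomorphy theorem is regular for tempered $\sigma$ and $\Re s \geq 0$.

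Next I would extract the pole containment. Since $\sigma \in \Irr_{\sqr}\GL_m$ is tempered, $L(1+2s, \sigma, \sym^2)$ is holomorphic and non-vanishing for $\Re s > -\frac12$, while the $\epsilon$-factor is entire and nowhere zero, so poles of $M(\sigma, s)$ are contained in poles of $L(2s, \sigma, \sym^2)$. Combined with the decomposition $L(2s, \sigma \otimes \sigma) = L(2s, \sigma, \sym^2)\,L(2s, \sigma, \wedge^2)$ and the non-vanishing of $L$-factors of tempered $\GL_m$-representations, every pole of $L(2s, \sigma, \sym^2)$ is a pole of $L(2s, \sigma \otimes \sigma)$, yielding the first assertion. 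For the reducibility claim, at a pole $s_0$ of $M(\sigma, s)$ the Rankin--Selberg factor $L(2s_0, \sigma \otimes \sigma)$ is infinite; taking $\pi_1 = \sigma$, $\pi_2 = \d\sigma$, $s_1 = s_0$, $s_2 = -s_0$ so that $L(s_1 - s_2, \pi_1 \otimes \d\pi_2) = L(2s_0, \sigma \otimes \sigma)$, the converse direction of the chain in Lemma~\ref{lem: irred GL_n} (valid for square-integrable $\pi_1, \pi_2$ as a consequence of the Bernstein--Zelevinsky linking criterion) yields reducibility of $\pi_1\abs{\det\cdot}^{s_1 - \frac12} \times \pi_2\abs{\det\cdot}^{s_2 + \frac12} = \sigma\abs{\det\cdot}^{s_0 - \frac12} \times \d\sigma\abs{\det\cdot}^{\frac12 - s_0}$.

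The hardest part will be pinning down the precise form of the Shahidi normalization for this specific Siegel parabolic and quoting the Casselman--Shahidi result with the right regularity conclusion. A self-contained alternative, which I would fall back on if needed, is to write $\sigma$ as the generalized Steinberg representation $\operatorname{St}(\rho,k)$ attached to a supercuspidal $\rho$ of $\GL_d$ with $m=dk$, embed $\sigma\abs{\det\cdot}^s$ in the fully induced representation $\rho\abs{\det\cdot}^{s+(k-1)/2} \times \cdots \times \rho\abs{\det\cdot}^{s-(k-1)/2}$, and decompose the long intertwining operator into a composition of $\GL$-interchanges (handled by Lemma~\ref{lem: irred GL_n}) and Siegel intertwining operators attached to the individual supercuspidal $\rho$'s; the resulting bookkeeping would recover $L(2s, \sigma, \sym^2)$ via the standard formulas for Steinberg $L$-factors.
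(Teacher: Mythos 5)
Your main approach is different from the paper's, and it has a concrete error together with a reliance on a result the paper deliberately avoids.

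The normalizing factor is wrong. For the Siegel parabolic of $\Sp_{2m}$ with Levi $\GL_m$, the adjoint action of the Levi on the Lie algebra of $U$ is indeed $\sym^2$ of the standard representation, but the Langlands--Shahidi $r$-factor is determined by the action on the dual side: $\hat G = SO_{2m+1}$, whose corresponding parabolic has a two-step unipotent radical with adjoint decomposition $\text{std}\oplus\wedge^2$. The correct normalization is therefore $L(s,\sigma)L(2s,\sigma,\wedge^2)$ (with shifted arguments for the two pieces), not $L(2s,\sigma,\sym^2)$ --- exactly what the paper's unramified Gindikin--Karpelevich formula \eqref{eq: unramwhit} displays. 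Your choice of $\sym^2$ makes the pole containment trivial (since $L(2s,\sigma\otimes\sigma)=L(2s,\sigma,\sym^2)L(2s,\sigma,\wedge^2)$), but it is not the factor that actually normalizes $M(\sigma,s)$.

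More fundamentally, even with the correct $L$-factor, your argument hinges on a holomorphy statement for the normalized operator which the authors explicitly flag as unproved: the Remark immediately following the lemma says that ``one expects'' $L(s,\sigma)^{-1}L(2s,\sigma,\wedge^2)^{-1}M(\sigma,s)$ to be holomorphic, following Casselman--Shahidi, but that this is precisely why they give a ``direct argument.'' The paper's proof proceeds instead by reduction to real poles via unramified twisting, then in the $p$-adic case realizing $\sigma$ as the Langlands subrepresentation of $\rho|\det|^l\times\cdots\times\rho|\det|^{-l}$ with $\rho$ supercuspidal and decomposing the long intertwining operator into corank-one factors to pin down the possible poles as half-integers in $[-l,0]$; in the archimedean cases the poles are computed explicitly for $m=1,2$. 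Your fallback paragraph gestures at essentially this $p$-adic argument via Steinberg representations, but you do not carry it out, and you give nothing for $F=\R$ or $F=\C$, where the Casselman--Shahidi theorem is also not invoked in the paper and the pole sets are instead compared directly. The reducibility deduction at the end of your proposal, via the last conclusion of Lemma~\ref{lem: irred GL_n} with $s_1=s_0$, $s_2=-s_0$, is correct and matches the paper.

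Finally, note that the Casselman--Shahidi holomorphy you cite is for $\Re s\ge 0$, where $M(\sigma,s)$ is already known to be holomorphic for $\Re s>0$ by temperedness of $\sigma$; the content of the lemma is about $\Re s\le 0$, and passing to that region requires a further argument (e.g.\ via the Plancherel measure) which your proposal does not supply.
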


\begin{remark}
Following \cite{MR1634020} one expects that under the condition of Lemma \ref{lem: sqr-int inter} we have the stronger conclusion that
$L(s,\sigma)^{-1}L(2s,\sigma,\wedge^2)^{-1}M(\sigma,s)$ is holomorphic.
The results of \cite{MR2581039} may be relevant for this question.
In any case, we will give a direct argument.
\end{remark}

\begin{proof}
The statement of the lemma is clearly invariant under twisting by an unramified character.
Therefore, we may assume that $\sigma$ is square-integrable.
Consider the $p$-adic case first.
Once again, upon twisting $\sigma$ by $\abs{\det\cdot}^{\iii t}$, it is enough to consider the poles on the real line.
If $\sigma$ is supercuspidal then it is known that the only possible real pole of $M(\sigma,s)$
is at $s=0$ and it can only occur if $\sigma$ is self-dual, i.e. if $L(0,\sigma\otimes\sigma)=\infty$.
Now let $\sigma$ be an arbitrary square-integrable representation.
Once again, if $\sigma$ is not self-dual then $M(\sigma,s)$ is holomorphic on $\R$.
Suppose therefore that $\sigma$ is self-dual.
By Bernstein--Zelevinsky we can realize $\sigma$ as the unique irreducible subrepresentation of
$\Sigma=\rho\abs{\det\cdot}^l\times\dots\times\rho\abs{\det\cdot}^{-l}$
for some non-negative half integer $l$ such that $2l+1$ is a divisor of $m$ and a supercuspidal representation $\rho$ on $\GL_{m/(2l+1)}$.
Moreover, $\rho$ is self-dual since $\sigma$ is.
The operator $M(\sigma,s)$ is the restriction of $M(\Sigma,s)$.
We decompose $M(\Sigma,s)$ as the composition of co-rank one intertwining operators.
Thus, the poles of $M(\Sigma,s)$ are contained in the union of the poles of
\[
M(\rho,s+k),\ \ k\in\{-l,\dots,l\}
\]
and those of the intertwining operators
\[
\rho\abs{\det\cdot}^{i+s}\times\rho\abs{\det\cdot}^{-j-s}\rightarrow
\rho\abs{\det\cdot}^{-j-s}\times\rho\abs{\det\cdot}^{i+s}, \ \ i,j\in\{-l,\dots,l\}, i\ne j.
\]
The real poles are therefore contained in the half integers between $\pm l$.
On the other hand, we know that $M(\sigma,s)$ is holomorphic for $\Re s>0$.
Thus, the real poles of $M(\sigma,s)$ are contained in the half integers between
$-l$ and $0$. These are exactly the real poles of $L(2s,\sigma\otimes\sigma)$.

Consider now the case $F=\R$.
Suppose first that $m=1$ and $\sigma$ is a character. Twisting by $\abs{\cdot}^{\iii t}$ we may assume that
either $\sigma=1$ or $\sigma=\sgn$.
The poles of $M(\sigma,s)$ on $\C$ are $2\Z_{\le0}$ in the former case
and $-1-2\Z_{\le0}$ in the latter.
In both cases the poles of $L(2s,\sigma\otimes\sigma)$ are $\Z_{\le0}$.

Suppose now that $m=2$ and $\sigma$ is a square-integrable representation of $\GL_2(F)$.
Up to a twist by $\abs{\det\cdot}^{\iii t}$ we may assume that $\sigma$ is
the irreducible subrepresentation of
$\Sigma=\abs{\cdot}^l\times\sgn^{2l-1}\abs{\cdot}^{-l}$ for some positive half-integer $l$.
Once again by decomposing $M(\Sigma,s)$ into a product, the poles
of $M(\sigma,s)$ are contained in those of
$M(\mathbf{1}_{\R^*},s+l)$, $M(\sgn^{2l-1},s-l)$ and
$\abs{\cdot}^{s+l}\times\sgn^{2l-1}\abs{\cdot}^{-s+l}\rightarrow
\sgn^{2l-1}\abs{\cdot}^{-s+l}\times\abs{\cdot}^{s+l}$.
Thus, the poles of $M(\Sigma,s)$ on $\C$ consist of half integers,
and therefore the poles of $M(\sigma,s)$ are non-positive half integers.
On the other hand, the poles of $L(2s,\sigma\otimes\sigma)$ are the non-positive half-integers.

Finally, if $F=\C$ we may assume that $\sigma$ is the unitary character $(z/\bar z)^l$,
for some half integer $l$.
The poles of $M(\sigma,s)$ on $\C$ are $-\abs{l}+\Z_{\le0}$ while the poles
of $L(2s,\sigma\otimes\sigma)$ are $-\abs{l}+\frac12\Z_{\le0}$.
\end{proof}

\begin{proof}[Proof of Proposition \ref{prop: M1/2}]
Write
\[
\pi=\sigma_1\times\dots\times\sigma_k
\]
where $\sigma_1,\dots,\sigma_k$ are essentially square-integrable.
We remark that since $\pi$ is self-dual and irreducible, $\sigma_i\times\d\sigma_j$ is irreducible
for all $i,j$.
By decomposing $M(\pi,s)$ into a product of co-rank one intertwining operators
it suffices to show that the following intertwining operators
\begin{gather*}
\sigma_i\abs{\cdot}^s\times\d\sigma_j\abs{\cdot}^{-s}\rightarrow\d\sigma_j\abs{\cdot}^{-s}\times\sigma_i\abs{\cdot}^s,
\ \ i,j=1,\dots,k, i\ne j,\\
M(\sigma_i,s),\ \ i=1,\dots,k,
\end{gather*}
are holomorphic at $s=\frac12$.
This follows from Lemma \ref{lem: irred GL_n} (resp., Lemma \ref{lem: sqr-int inter})
and the irreducibility of $\sigma_i\times\d\sigma_j$ (resp. $\sigma_i\times\d\sigma_i$).
\end{proof}

\subsection{Local Fourier--Jacobi transform -- $p$-adic case}
In this subsection $F$ is $p$-adic.
Following \cite{MR1675971}, for any $W\in\Ind(\WhitML(\pi))$ and $\Phi\in \swrz(F^n)$ define a genuine function on $\tilde G$:
\begin{equation}\label{eq: defwhitform}
\whitform(W,\Phi,\animg g,s)=\int_{V_\gamma\bs V} W_s(\gamma v \toG(g))\wevinv(v\animg g)\Phi(\xi_n)\,dv,\,\,\,g\in G'
\end{equation}
where the element $\gamma\in G$ and the subgroup $V_\gamma$ were defined in \S\ref{sec: elements}.

The basic properties of $\whitform$ are summarized in the following lemma.
Note that the first part is not valid for the more general integrals introduced in \cite{MR1675971}.

\begin{lemma} \label{L: whitform}
\begin{enumerate}
\item The integrand in \eqref{eq: defwhitform} is compactly supported uniformly in $s$.
Thus $s\mapsto\whitform(W,\Phi,\tilde g,s)$ is entire.
\item For any $W\in\Ind(\WhitML(\pi))$, $\Phi$ and $s\in\C$, the function
$\tilde g\mapsto\whitform(W,\Phi,\tilde g,s)$ is smooth and $(\tilde N,\psi_{\tilde N})$-equivariant.
\item For any $\tilde g\in\tilde G$, $v\in V$ and $x\in G'$ we have
\begin{equation} \label{eq: whitform equivariance}
\whitform(I(s,v\toG (x))W,\wevinv(v\animg x)\Phi,\tilde g,s)=
\whitform(W,\Phi,\tilde g\animg x,s).
\end{equation}
\item \label{part: unramwhitform} Suppose that $p\ne2$, $\pi$ is unramified, $\psi$ is unramified, $W^\circ \in\Ind(\WhitML(\pi))$
is the standard unramified vector, and $\Phi_0=1_{\OO^n}$. Then $\whitform(W^\circ,\Phi_0,e,s)\equiv1$.
\end{enumerate}
\end{lemma}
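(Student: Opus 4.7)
The plan is to handle the four parts in order, with the bulk of the technical work concentrated in (1); parts (2)--(4) will then follow from it together with the formalism of \S\ref{sec: characters} and the explicit Weil-representation formulas. For part (1), I would fix explicit coordinates on $V$ and choose a transversal for $V_\gamma \bs V$ using the description $V_\gamma = \toLevi(N'_\GLn) \ltimes \{\toU(\sm{x}{}{}{\startran{x}}) : x \in \Mat_{n,n}\}$ from \S\ref{sec: elements}. On this transversal the Weil-representation factor $\wevinv(v \animg g)\Phi(\xi_n)$, computed via \eqref{eq: weil1}--\eqref{eq: weil3} and \eqref{eq: weilext}, unfolds into a Schwartz function on the Heisenberg-type coordinates of $V/V_0$ times a unitary character, and is therefore compactly supported in those coordinates. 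For the remaining ``Levi-type'' coordinates, I would compute the Iwasawa decomposition of $\gamma v \toG(g)$ explicitly and use that $W_s$, as an element of $\Ind(\WhitML(\pi))$, forces the Whittaker function of $\pi$ on $\GLnn$ to vanish when the Levi part of the decomposition runs out to infinity. The exponential weight $\wgt{\cdot}^s$ only contributes a bounded factor on compact sets, so compact support is uniform in $s$ and holomorphy is automatic. The main obstacle I anticipate is ensuring that these two mechanisms combine to give simultaneous compactness in every direction, rather than each controlling only part of the relevant variables.

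For part (2), smoothness follows from (1) together with the smoothness of $W$ and of the Weil representation. The equivariance is verified separately for $\animg n$ with $n \in N'_{\Levi'}$ and $\animg u$ with $u \in U'$. In the first case, changing variable to move $\toG(n)$ through $v$, combined with $\gamma \toG(n)\gamma^{-1} \in N_\Levi$ and the identity $\psi_{N'_{\Levi'}}(n) = \psi_{N_\Levi}(\gamma \toG(n)\gamma^{-1})$ from \S\ref{sec: characters}, produces the factor $\psi_{N'_{\Levi'}}(n)$ via the left Whittaker equivariance of $W_s$. In the second case, $\toG(u)$ lies in $V$ and is absorbed by a change of variable in $v$; \eqref{eq: weilH2}, \eqref{eq: weilH3}, and the definition \eqref{eq: weilext} of $\wev$ then yield the character $\psi_{U'}(u)$, matching $\psi_{\tilde N}$. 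Part (3) is essentially formal: one changes variable $v \mapsto \toG(x) v \toG(x)^{-1}$ in $V$ (using that $\toG(G')$ normalizes $V$) and applies the coherence identity \eqref{eq: weilext2} together with the right-translation definition of $I(s, v\toG(x))$ to match the two sides after a direct bookkeeping.

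For part (4), all data are unramified, so by part (1) the integrand is compactly supported, and the calculation localizes to the image of $V(\OO)$ in $V_\gamma \bs V$. For $v \in V(\OO)$ one has $\gamma v \in K$, since $\gamma$ is a product of Weyl elements and $V(\OO) \subset K$; hence $W^\circ_s(\gamma v) = W^\circ(e) \wgt{\gamma v}^s = 1$ by the normalization of the spherical vector. Simultaneously, \eqref{eq: weil1}--\eqref{eq: weil3} with $\Phi_0 = 1_{\OO^n}$, $\psi$ unramified and $\Hei{v} \in \Heise(\OO)$ yield $\wevinv(v)\Phi_0(\xi_n) = 1$ on $V(\OO)$. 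With the gauge-form measure convention of \S\ref{sec: chevalley} the image of $V(\OO)$ in $V_\gamma \bs V$ has volume one, and $\whitform(W^\circ, \Phi_0, e, s) \equiv 1$ follows.
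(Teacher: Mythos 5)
Your outline for part (1) identifies the right ingredients—unfold the $V$-integral, exploit the compact support of the Schwartz function $\Phi$ in the ``Heisenberg'' coordinates, and exploit the decay of the $\GLnn$-Whittaker function in the ``Levi'' direction via an Iwasawa decomposition—but you explicitly stop short of the point that actually makes the proof work. After conjugating by $\gamma$ and absorbing the Heisenberg variables, one is left with an integral of the shape $\int W_s(\toUbar(u)\gamma)\Phi(\xi_n-\bar x)\psi(\tfrac12 u_{n,n+1})\,du$ over a subspace of $\symspace_{2n}$ parametrized as $u=\sm{x}{y}{0}{\startran{x}}$, with $\bar x$ the bottom row of $x$. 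The two control mechanisms you describe apply to \emph{overlapping but distinct} sets of coordinates ($\bar x$ on the one hand, the Iwasawa torus of $\toUbar(u)$ on the other), and it is a genuine theorem—not a formal consequence—that bounding $\norm{\bar x}$ together with the chamber constraint on the Whittaker function actually bounds $\norm u$. The paper proves precisely such an inequality, namely that $\log\max(\norm u, \abs{t_1\cdots t_{2n}}, \alpha)$ is controlled by $\log\max(\norm{\bar x},\beta)$, where the $t_i$ are the Iwasawa torus coordinates of $\toUbar(u)$, and $\alpha,\beta$ are the relevant chamber parameters. Establishing it requires a careful analysis of the minors of $\toUbar(u)$ (in particular $s_n^2\le s_{2n}$ and $s_n s_{n+1}\le \max(1,\norm{\bar x})s_{2n}$). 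Without this estimate, parts (1) and (4) do not follow; you have in effect restated the obstacle as ``the main obstacle I anticipate.'' This is the central content of the lemma, and the proposal leaves it open.

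Two smaller points. In part (2), the $U'$-equivariance does not come from ``absorbing $\toG(u)$ into a change of variable in $v$'': $\toG(U')$ is a complement to $V$ inside $N$ (since $N=V\rtimes\toG(N')$ and $U'\subset N'$), not a subgroup of $V$. The correct mechanism is that $\gamma\toG(u)\gamma^{-1}\in U$, so it drops out of $\psi_N$, while $\wevinv(\animg u)\Phi(\xi_n)=\psi_{U'}(u)\Phi(\xi_n)$ directly from \eqref{eq: weil3}. In part (4), your observation that the integrand equals $1$ on the image of $V(\OO)$ is correct, but you have not shown it vanishes off that set; the paper deduces this vanishing from the same inequality discussed above (the right-hand side of the estimate vanishes on the support of the unramified data, forcing $\norm u\le 1$), so this gap is again downstream of the missing estimate in part (1).

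Part (3) is fine and matches the paper: it is immediate from \eqref{eq: weilext2} once the convergence questions in (1) are settled.
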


\begin{proof}
We first note that the integrand defining $\whitform$ is left $V_\gamma$-invariant
since for any $v\in V_\gamma$ we have
\[
\wevinv(v)\Phi(\xi_n)=\psi_V(v)^{-1}\psi(v_{n,2n+1})^{-1}\Phi(\xi_n)
\]
(by \eqref{eq: weilH2} and \eqref{eq: weilext}) and
$\psi_N(\gamma v\gamma^{-1})=\psi_V(v)\psi(v_{n,2n+1})$.

It is enough to prove the first statement for $g=e$.
Conjugating by $\gamma$, using \eqref{eq: weilH1}, \eqref{eq: weilH3} and \eqref{eq: weilext},
the integral in \eqref{eq: defwhitform} becomes
\begin{equation}\label{eq: convaux}
\int W_s(\toUbar(u)\gamma)\Phi(\xi_n-\bar x)\psi(\frac12u_{n,n+1})\,du,
\end{equation}
where the integration is over $u$ of the form $u=\sm{x}{y}{0}{\startran{x}}$ (a vector subspace of $\symspace_{2n}$)
and $\bar x$ is the bottom row of $x$.
Let $a=\levi(\diag(t_1^{-1},\ldots,t_{2n}^{-1}))\in T$ be the diagonal part of the Iwasawa decomposition of
$\toUbar(u)$. Let $\alpha=\max(\abs{t_1/t_2},\dots,\abs{t_{2n-1}/t_{2n}})$
and $\beta=\max(1,\abs{t_2/t_1},\dots,\abs{t_{2n}/t_{2n-1}})$.
Note that $\abs{t_1\dots t_{2n}}\ge1$.
We will see below that
\begin{equation} \label{eq: maineq}
\log\max(\norm{u},\abs{t_1\dots t_{2n}},\alpha)\ll\log\max(\norm{\bar x},\beta).
\end{equation}
On the other hand, by the analytic properties of Whittaker functions, $W_s(ak)=0$ for $k\in K$ unless $\beta$ is bounded above
(in terms of $W$).
The first part of the lemma would therefore follow from \eqref{eq: maineq}.

To show \eqref{eq: maineq}, let $s_i=\prod_{j=1}^it_j$, $i=1,\dots,2n$.
Observe that $s_i$ is the $L^\infty$-norm of the vector $\minors_i$ in $F^{4n\choose i}$ whose
coordinates are the $i\times i$ minors of the last $i$ rows of $\toUbar(u)$.
Note that the nonzero coordinates of $\minors_n$ (say $a_1,\dots,a_r$) are the minors of size $\le n$ of $x$
and that among the coordinates in $\minors_{2n}$ are all products $a_ia_j$.
It follows that $s_n^2\le s_{2n}$, i.e. $\abs{t_1\dots t_n}\le\abs{t_{n+1}\dots t_{2n}}$.
This easily implies that $\log\alpha\ll\log\beta$.
Next we claim that
\begin{equation} \label{eq: sn}
s_ns_{n+1}\le\max(1,\norm{\bar x})s_{2n}
\end{equation}
i.e.,
\begin{equation} \label{eq: tis2}
\abs{t_1\dots t_n}\le\max(1,\norm{\bar x})\abs{t_{n+2}\dots t_{2n}}
\end{equation}
Indeed, the nonzero coordinates of $\minors_{n+1}$ are either
the minors (say $b_1,\dots,b_s$) of the right bottom corner of $\toUbar(u)$ of size $(n+1)\times 3n$ or
the entries of $\bar x$ times the $a_i$'s, while any $a_i$ is an alternating sum of the entries of $\norm{\bar x}$
(or $1$) times the minors (say $c_1,\dots,c_t$) of the top $n-1$ rows of $x$.
Since the products $b_ic_j$ and $a_ia_j$ are among the coordinates of $\minors_{2n}$ the inequality
\eqref{eq: sn} follows.

It follows now from \eqref{eq: tis2} that up to a multiplicative constant, $\abs{t_1}$, and hence
all $\abs{t_j}$'s, are bounded by a suitable power of $\max(\norm{\bar x},\beta)$. Thus,
\[
\log\abs{s_{2n}}\ll\log\max(\norm{\bar x},\beta).
\]
On the other hand $\norm{u}\le\abs{s_{2n}}$ since the entries of $u$ are among the coordinates of $\minors_{2n}$.
The relation \eqref{eq: maineq} follows.

The fact that $\whitform(W,\Phi,\cdot,s)$ is a Whittaker function on $\tilde G$
follows from the facts that $\toG(N')$ normalizes both $V$ and $V_\gamma$ and that for any $n\in N'_{\Levi'}$ and $u\in U'$ we have
$\psi_N(\gamma \toG(nu)\gamma^{-1})=\psi_{N'_{\Levi'}}(n)$
and $\wevinv(\animg{n}\animg{u})\Phi(\xi_n)=\psi_{U'}(u)\Phi(\xi_n)$
(by \eqref{eq: weil1} and \eqref{eq: weil3}).
It is also clear from \eqref{eq: weilext2} that the map $\whitform$ is equivariant.

Finally, in the unramified case, the right-hand side of \eqref{eq: maineq} equals $0$ on the
support of $W^\circ$ and therefore, the integrand in \eqref{eq: convaux} is supported in the set $\norm{u}\le1$.
\end{proof}

For an essentially square-integrable representation $\delta$ of $\GL_m$ let $\expo(\delta)$
be the real number $\alpha$ such that $\delta\abs{\det}^{\alpha}$ is unitary.\footnote{Note the twist by $\abs{\det}^{\alpha}$
rather than $\abs{\det}^{-\alpha}$}
Suppose that $\pi\in\Irr_{\gen}\GLnn$. We can write $\pi=\delta_1\times\dots\times\delta_k$
where $\delta_i$ are essentially square-integrable and are uniquely determined up to permutation.
Set $\expo(\pi)=\max_i\expo(\delta_i)$. Thus, $\expo(\pi)\in [0,\frac12)$ if $\pi$ is unitarizable
and in this case $\expo(\pi)=0$ if and only if $\pi$ is tempered.

Next, let us recall another result of Ginzburg--Rallis--Soudry.
Let $V_1$ be the inverse image of the center of the Heisenberg group under the quotient map $V\rightarrow V/V_0$.
Let $\psi_{V_1}$ be the character of $V_1$ such that $\wev(v)$ acts by the scalar $\psi_{V_1}(v)$ for $v\in V_1$.
Let $I(\pi)^\circ$ be the (vector) subspace of $I(\pi)$ consisting of sections which are supported in
the open set $G^\circ=P\gamma V\toG(G')$.
Denote by $J_{(V_1,\psi_{V_1})}$ the twisted Jacquet module (i.e., the co-invariants)
with respect to $(V_1,\psi_{V_1})$. By \cite[\S 6.1]{MR1671452} the canonical map
\begin{equation} \label{eq: restmap}
J_{V_1,\psi_{V_1}}(I(\pi)^\circ)\rightarrow J_{V_1,\psi_{V_1}}(I(\pi))
\end{equation}
is an isomorphism.

\begin{lemma} \label{lem: Aasymp}
Suppose that $\pi\in\Irr_{\gen}\GLnn$ and $\alpha\in\R$ with $\alpha>\expo(\pi)$.
Then for any $W\in\Ind(\WhitML(\pi))$ and $\Phi\in \swrz(F^n)$ there exists $c>0$ such that
for any $t'=\levi'(t)\in T'$, $k\in K'$, $s\in\C$ we have
$\whitform(W,\Phi,\animg{t'}\animg k,s)=0$ unless $\abs{t_i}\le c\abs{t_{i+1}}$ for $i=1,\dots,n$
(where $t=\diag(t_1,\dots,t_n)$ and $t_{n+1}=1$) in which case
\[
\abs{\whitform(W,\Phi,\animg{t'}\animg k,s)}\ll
\modulus_{B'}(t')^{\frac12}\abs{\det(t)}^{\Re s+\frac12-\alpha}.
\]
\end{lemma}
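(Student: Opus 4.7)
The plan is to reduce the asymptotic estimate to Jacquet's gauge bound for Whittaker functions of $\pi$ on $\GLnn$, via a structural algebraic identity. The crucial observation is
\[
\gamma\toG(\levi'(a))\gamma^{-1}=\levi(\toM(a)), \qquad a\in\GLn,
\]
which is verified by direct computation: since $\gamma=w_U\toG((w'_{U'})^{-1})$, the conjugate equals $w_U\levi(\toMd(a^*))w_U^{-1}$, and the standard relation $w_U\levi(g)w_U^{-1}=\levi(g^*)$ together with $(\toMd(a^*))^*=\toM(a)$ concludes.

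Starting from \eqref{eq: defwhitform}, I would perform the change of variable $v\mapsto\toG(t')v\toG(t')^{-1}$ in $V_\gamma\bs V$. The subgroup $V_\gamma=\toLevi(N'_\GLn)\ltimes\{\toU(\sm{x}{}{}{\startran{x}})\}$ is stable under this conjugation: the first factor is fixed pointwise, while on the second the action is $x\mapsto x\startran{a}$ with Jacobian $|\det a|^n$. A short root-theoretic count shows that conjugation by $\toG(t')$ acts on all of $V$ with trivial determinant (the contributions from the outer-inner roots $e_i\pm e_{n+k}$ cancel), so the induced Jacobian on the quotient is $|\det a|^{-n}$. Using the key identity $\gamma\toG(t')=\levi(\toM(a))\gamma$, the semi-invariance of $W$ together with $\wgt{\levi(m)g}=|\det m|\wgt{g}$ extracts a factor $|\det a|^{s+n+1/2}$ from $W_s$, reducing it to a Whittaker function of $\pi$ on $\GLnn$ evaluated at $\toM(a)$. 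The Weil formulas \eqref{eq: weilext2} and \eqref{eq: weil1} simultaneously extract $|\det a|^{1/2}\beta_{\psi^{-1}}(t')$ and replace $\xi_n$ by $\xi_n a$. All $|\det a|$-powers combine to $|\det a|^{s+1}$ in front of an integral over $v'\in V_\gamma\bs V$ whose integrand involves the Whittaker value at $\toM(a)$ together with bounded Weil and $\wgt{\cdot}^s$ factors.

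It remains to bound the remaining integral uniformly. By Lemma \ref{L: whitform}(1) (applied to the transformed data via the equivariance \eqref{eq: whitform equivariance}), the support in $v'$ is compact, uniformly in $s$, and--since $k\in K'$ is compact and $t'$ is restricted to the eventual support region--uniformly in $t',k$. Jacquet's gauge bound states that for any $\alpha>\expo(\pi)$, every $W\in\WhitML(\pi)$ (uniformly on bounded sets) satisfies
\[
|W(\diag(a_1,\ldots,a_{2n}))|\ll\modulus_{B_\GLnn}^{1/2}(\diag(a))\prod_{i=1}^{2n-1}|a_i/a_{i+1}|^{-\alpha}
\]
on the region where all consecutive ratios are at most some constant $c>0$. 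At $\diag(a)=\toM(\diag(t_1,\ldots,t_n))=\diag(t_1,\ldots,t_n,1,\ldots,1)$, the support condition becomes $|t_i/t_{i+1}|\le c$ for $i=1,\ldots,n$ (with $t_{n+1}=1$), and the gauge product telescopes to $|t_1|^{-\alpha}$, giving the bound $\prod_i|t_i|^{n-i+1/2}\cdot|t_1|^{-\alpha}$. Combining with the prefactor $|\det a|^{\Re s+1}$ produces an estimate that dominates the asserted $\modulus_{B'}(t')^{1/2}|\det t|^{\Re s+1/2-\alpha}$, since the excess factors $\prod_{i\ne 1}|t_i|^{\alpha}$ are bounded above on the support.

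The main subtlety is the careful bookkeeping of $|\det a|$-powers across the algebraic identity, the Jacobian of the substitution, the Weil representation action, and the semi-invariance of $W$; the underlying Whittaker bound is standard.
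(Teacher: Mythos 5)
Your overall scheme—conjugate by $\toG(t')$, transport it across $\gamma$ via $\gamma\toG(t')\gamma^{-1}=\toLevi(t)$, track the Jacobian and Weil/$\wgt$ factors, and then estimate the residual $\GLnn$-Whittaker function—is exactly the route the paper takes, and your bookkeeping of the $|\det t|$-powers is consistent with the paper's (the paper similarly arrives at $\beta_\psi(t')\,|\det t|^{\frac12-n}W_s(\toLevi(t))\Phi(t_n\xi_n)$ after dropping the $v$-integral).

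The genuine gap is the claim that the $v$-integral has compact support \emph{uniformly in $t'$}. Lemma~\ref{L: whitform}(1) gives compact support of the integrand for a fixed $g$, uniformly in $s$ only; applying it "to the transformed data via the equivariance \eqref{eq: whitform equivariance}" replaces $(W,\Phi)$ by $(I(s,\toG(t'))W,\wevinv(\animg{t'})\Phi)$, so the resulting compact set depends on $t'$. Your parenthetical "since $\dots$ $t'$ is restricted to the eventual support region" is circular and, in any case, that region $\{\,|t_i|\le c|t_{i+1}|\,\}$ is not compact (it allows $t_1\to 0$), so compactness in $v'$ cannot be inherited from a compact $t'$-range. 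The paper closes this gap with an essential preliminary step you omit: by \eqref{eq: restmap} (i.e., the isomorphism $J_{V_1,\psi_{V_1}}(I(\pi)^\circ)\to J_{V_1,\psi_{V_1}}(I(\pi))$, cf.\ Soudry--Tanay), one may replace $W$ by a section $W'\in I(\pi)^\circ$ (supported in the open cell $G^\circ=P\gamma V\toG(G')$) without changing $\whitform(\cdot,\Phi,\cdot,s)$ for all $s$. For such $W'$, the set $\{v: W'(\toLevi(t)\gamma v)\ne 0\}$ is a fixed compact subset of $V_\gamma\bs V$, \emph{independent of $t$}, precisely because left-translation by $\toLevi(t)\in P$ does not change the support of $W'$ modulo $P$. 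Only then is it legitimate to bound the integral by its value "at $v=e$" (up to modifying $W,\Phi$) and invoke the Whittaker estimate. Without this reduction, the support condition and the uniformity of the constants in your final bound are not established.

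Two small further remarks: your gauge-bound form $\prod_i|a_i/a_{i+1}|^{-\alpha}$ telescoping to $|t_1|^{-\alpha}$ gives a slightly sharper bound than the $|\det t|^{-\alpha}$ implicit in the paper's appeal to [LM14b, Lemma 2.1]; both are fine, and your closing comparison (bounding the excess $\prod_{i\ge 2}|t_i|^\alpha$ on the support) is correct, though the phrase "dominates the asserted" should read that your bound is dominated by the asserted one. Also, the Jacobian argument should be read carefully: what one needs is $|\det\mathrm{Ad}(\toG(t'))|_{V}|/|\det\mathrm{Ad}(\toG(t'))|_{V_\gamma}|=|\det t|^{-n}$, which your root count does give, but the directional convention (conjugation $v\mapsto\toG(t')v\toG(t')^{-1}$ versus its inverse) deserves a sentence.
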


\begin{proof}
Using \eqref{eq: restmap} (cf.~\cite[Corollary 2.2]{ST}) we can find $W'\in I(\pi)^\circ$ such that
$\whitform(W,\Phi,\animg{g},s)=\whitform(W',\Phi,\animg{g},s)$ for all $g\in G'$, $s\in\C$.
Thus, we may assume without loss of generality that $W\in I(\pi)^\circ$. Also, it suffices to deal with the case $k=e$.
We write
\[
\whitform(W,\Phi,\animg{t'},s)=\int_{V_\gamma\bs V} W_s(\gamma v \toG(t'))\wevinv(v\animg{t'})\Phi(\xi_n)\,dv.
\]
Making a change of variable $v\mapsto\toG(t')v\toG(t')^{-1}$ we get (using \eqref{eq: weil1})
\begin{multline*}
\abs{\det t}^{-n}\int_{V_\gamma\bs V} W_s(\toLevi(t)\gamma v)\wevinv(\animg{t'}v)\Phi(\xi_n)\,dv=\\
\beta_\psi(t')\abs{\det t}^{\frac12-n}\int_{V_\gamma\bs V} W_s(\toLevi(t)\gamma v)\wevinv(v)\Phi(t_n\xi_n)\,dv.
\end{multline*}
By our condition on $W$, the integral over $v$ is compactly supported independently of $t$ and $s$.
Thus, up to changing $W$ and $\Phi$ it suffices to consider the term $v=e$, i.e.,
\[
\beta_\psi(t')\abs{\det t}^{\frac12-n}W_s(\toLevi(t))\Phi(t_n\xi_n).
\]
Note that $\modulus_{B'}(t')^{\frac12}=\modulus_B(\toLevi(t))^{\frac12}\abs{\det t}^{-n}$.
Thus, the lemma follows from standard estimates on the Whittaker function (e.g., \cite[Lemma 2.1]{LMao4}).
\end{proof}

\begin{remark}
More precisely, the proof of the lemma gives the precise asymptotics of $\whitform(W,\Phi,\animg{t'}\animg k,s)$
in terms of the exponents of $\pi$ (using \cite{MR2495561}).
\end{remark}

\subsection{Local Fourier--Jacobi transform -- archimedean case}
Now assume that $F$ is archimedean.
We start with a standard estimate on Whittaker functions.
\begin{lemma}
Let $\pi\in\Irr_{\gen}\GLnn$.
For any $m\ge0$ and $\alpha>\expo(\pi)$ there exists a continuous seminorm $\lambda$ of $\WhitM(\pi)$ such that
for any $W\in \WhitM(\pi)$ we have
\begin{equation} \label{eq: whitestim}
\abs{W(tk)}\le\lambda(W)\prod_{i=1}^{2n-1}(1+t_it_{i+1}^{-1})^{-m}
\modulus_{B_\GLnn}(t)^{\frac12}\abs{\det(t)}^{-\alpha},
\end{equation}
where $t=\diag(t_1,\dots,t_{2n})\in T_\GLnn$ with $t_{2n}=1$ and $k\in K_\GLnn$.
\end{lemma}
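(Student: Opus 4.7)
The plan is to combine a uniform growth bound on $W(tk)$ with rapid decay in each simple-root direction $t_i/t_{i+1}\to\infty$, both obtained by standard methods for smooth Whittaker functions on reductive groups.

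\emph{Growth bound.} As a first step I would establish a continuous seminorm $\lambda_0$ on $\WhitM(\pi)$ such that
\[
|W(tk)| \le \lambda_0(W)\, \modulus_{B_\GLnn}(t)^{1/2} |\det(t)|^{-\alpha}
\]
uniformly in $t \in T_\GLnn$ (with $t_{2n}=1$) and $k \in K_\GLnn$. This rests on the Casselman--Wallach theory of asymptotic expansions for smooth vectors: the function $\modulus_{B_\GLnn}(t)^{-1/2} W(tk)$ admits, on the positive Weyl chamber, an asymptotic expansion whose leading exponents are given by the Langlands data of $\pi$. Since $\alpha > \expo(\pi)$, each exponent is dominated by $|\det(t)|^{-\alpha}$, with the continuity in $W$ following from the moderate growth of smooth vectors. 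Outside the positive chamber, compactness of $K_\GLnn$ together with moderate growth handles the remaining region.

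\emph{Rapid decay.} For each $i=1,\dots,2n-1$, let $X_i$ denote the simple root vector for $\alpha_i$, normalized so that $\psi_N(\exp(uX_i)) = \psi(u)$. Differentiating the Whittaker identity
\[
W\bigl(\exp(u_1 X_{i_1}) \cdots \exp(u_r X_{i_r})\, g\bigr) = \psi(u_1)\cdots\psi(u_r)\, W(g)
\]
in the $u_j$'s at the origin, and rewriting $\exp(uX)g = g\exp(u\,\Ad(g^{-1})X)$ to convert left translation into the right regular action, one obtains the pointwise functional equation
\[
\bigl(\pi(\Ad(g^{-1}) X_{i_1}) \cdots \pi(\Ad(g^{-1}) X_{i_r}) W\bigr)(g) = \psi'(0)^r W(g).
\]
For $g = tk$ we have $\Ad(g^{-1}) X_{i_j} = (t_{i_j+1}/t_{i_j})\, \Ad(k^{-1}) X_{i_j}$, and the scalar factors pull out of the product in $U(\mathfrak{g})$, giving
\[
W(tk) = \psi'(0)^{-r} \prod_{j=1}^r (t_{i_j+1}/t_{i_j})\, \bigl(\pi(\Ad(k^{-1}) X_{i_1} \cdots \Ad(k^{-1}) X_{i_r}) W\bigr)(tk).
\]
One checks directly that the vector $\pi(\Ad(k^{-1}) X_{i_1} \cdots \Ad(k^{-1}) X_{i_r}) W$ again lies in $\WhitM(\pi)$ (the Whittaker transformation is preserved by the right regular action). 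Choosing the sequence $(i_j)$ so that the index $i$ appears with multiplicity $m_i$ and applying the growth bound to this vector -- with a seminorm bounded uniformly in $k$ by compactness of $K_\GLnn$ and continuity of the $\mathfrak{g}$-action on smooth vectors -- yields, for every multi-index $\underline m = (m_1,\dots,m_{2n-1})$,
\[
|W(tk)| \le \lambda_{\underline{m}}(W) \prod_{i=1}^{2n-1} (t_i/t_{i+1})^{-m_i}\, \modulus_{B_\GLnn}(t)^{1/2} |\det(t)|^{-\alpha}.
\]

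\emph{Combination.} Summing these bounds over all $\underline m \in \{0,m\}^{2n-1}$ finishes the proof: on the region where $t_i/t_{i+1} \le 1$ the factor $(1+t_i/t_{i+1})^{-m}$ is bounded, while on the complementary region the bound with $m_i = m$ provides the required decay, so the sum dominates the product $\prod (1+t_i/t_{i+1})^{-m}$ times the growth factor. The main obstacle is the first step: the pointwise growth estimate relies on the non-trivial Casselman--Wallach asymptotic expansion and the identification of its exponents with the Langlands parameters of $\pi$. Once this is in hand, the decay step is a straightforward differentiation of the Whittaker transformation law.
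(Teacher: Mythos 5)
Your rapid-decay mechanism is a genuine alternative to the paper's: you differentiate the left-$(N,\psi_N)$-equivariance of $W$, while the paper writes $W$ as a sum $\sum_j f_j*W_j$ with $f_j\in C_c^\infty(\GLnn)$ (Dixmier--Malliavin) and obtains the decay factor $\prod_i(1+\abs{t_it_{i+1}^{-1}})^{-m}$ by observing that the integral over $N_\GLnn/N_\GLnn^{\der}$ produces a Fourier transform of a compactly supported function in the variables $t_it_{i+1}^{-1}$, citing \cite[Lemma 4.1]{MR518111}. Both are standard ways to convert left $N$-equivariance plus smoothness into rapid decay; yours is arguably more elementary, and the conjugation $\Ad((tk)^{-1})X_i=(t_{i+1}/t_i)\Ad(k^{-1})X_i$ together with continuity of the derived action on the Casselman--Wallach globalization does give, uniformly over $k\in K_\GLnn$, the exact identity $W(tk)=\psi'(0)^{-r}\prod_j(t_{i_j+1}/t_{i_j})\,(U_{\underline m,k}W)(tk)$ with $U_{\underline m,k}W\in\WhitM(\pi)$ controlled by a seminorm of $W$. (Minor: in the combination step you want the minimum over $\underline m\in\{0,m\}^{2n-1}$ of the bounds, not their sum, and $\min(1,x^{-m})\asymp(1+x)^{-m}$ for $x>0$.)

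However, there is a genuine gap in your first step. You assert a \emph{global} bound $\abs{W(tk)}\le\lambda_0(W)\modulus_{B_\GLnn}(t)^{1/2}\abs{\det t}^{-\alpha}$ with the asymptotic expansion handling the chamber $\abs{t_i/t_{i+1}}\le1$ and ``compactness of $K_\GLnn$ together with moderate growth'' handling the complement. The second half is false as stated: moderate growth only gives $\abs{W(tk)}\le\lambda(W)\norm{t}^N$ for some fixed $N$, whereas $\modulus_{B_\GLnn}(t)^{1/2}\abs{\det t}^{-\alpha}$ can decay faster than any fixed power of $\norm{t}$ when some $t_i/t_{i+1}\to\infty$ (already for $\GL_2$ with $\alpha>\tfrac12$, the right-hand side is $\abs{t_1}^{1/2-\alpha}\to 0$ while moderate growth gives only $\abs{t_1}^N$). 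On that region you need the rapid decay, so your step 1 cannot be established independently of step 2. Moreover, once you weaken step 1 to hold only on the chamber $\abs{t_i/t_{i+1}}\le1$, your step-2 bounds $\abs{W(tk)}\le\lambda_{\underline m}(W)\prod_i(t_i/t_{i+1})^{-m_i}\modulus_{B_\GLnn}(t)^{1/2}\abs{\det t}^{-\alpha}$ are only available there, and the combination no longer covers all of $T_\GLnn$. The fix is a reordering that matches the paper's logic: establish the decay identity first (your differentiation argument is fine for this), then use uniform moderate growth of the derived vectors $U_{\underline m,k}W$ to dispose of the region where some $\abs{t_i/t_{i+1}}$ is large (choosing the $m_i$ large enough in terms of the moderate-growth exponent), and only then invoke the Casselman--Wallach expansion in the remaining chamber $\abs{t_1}<\dots<\abs{t_{2n}}$, as in \cite[Theorem 15.2.5 and Lemma 15.2.3]{MR1170566}.
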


\begin{proof}
For any $f\in C_c(\GLnn)$ we have
\begin{multline*}
(f*W)(tk)=\int_{\GLnn}f(g)W(tkg)\ dg=\int_{\GLnn}f(k^{-1}g)W(tg)\ dg\\=\int_{N_\GLnn\bs\GLnn}\int_{N_\GLnn}f(k^{-1}ng)W(tng)\ dn\ dg=
\int_{N_\GLnn\bs\GLnn}\int_{N_\GLnn}f(k^{-1}ng)\psi_N(tnt^{-1})W(tg)\ dn\ dg\\=
\int_{N_\GLnn\bs\GLnn}\phi_{k,g}(t_1t_2^{-1},\dots,t_{2n-1}t_{2n}^{-1})W(tg)\ dg
\end{multline*}
where $\phi_{g_1,g_2}$ is the Fourier transform of $\int_{N_\GLnn^{\der}}f(g_1^{-1}\cdot ng_2)\ dn\in C_c(N_\GLnn/N_\GLnn^{\der})$.
(Note that we can integrate over $g$ in a compact subset of $N_\GLnn\bs\GLnn$ depending only on the support of $f$.)
It follows from a standard argument (using \cite[Lemma 4.1]{MR518111}) that there exists a compact subset $C\subset\GLnn$
and for any $m\ge1$ there exists $X_m$ in the universal enveloping algebra of the Lie algebra of $\GLnn$ such that
\[
\abs{W(tk)}\le\prod_{i=1}^{2n-1}(1+\abs{t_it_{i+1}^{-1}})^{-m}\sup_{c\in C}\abs{X_mW(tc)}.
\]
By the uniform moderate growth of $W$, it remains to prove \eqref{eq: whitestim} in the chamber $\abs{t_1}<\dots<\abs{t_{2n}}$.
This follows from \cite[Theorem 15.2.5 and Lemma 15.2.3]{MR1170566}.
\end{proof}

This immediately gives the following analogue of Lemma \ref{L: whitform}.
(The proof is similar and will be omitted.)
\begin{lemma} \label{L: whitformarch}
For any $W\in\Ind(\WhitML(\pi))$ and $\Phi\in\swrz(F^n)$ the integral \eqref{eq: defwhitform} is well defined and
absolutely convergent uniformly for $s$ and $g$ in compact sets. Thus $\whitform(W,\Phi,\tilde g,s)$ is entire in $s$
and as a function of $\tilde g$ is smooth, $(\tilde N,\psi_{\tilde N})$-equivariant
and of moderate growth (modulo $\tilde N$). Finally \eqref{eq: whitform equivariance} holds.
\end{lemma}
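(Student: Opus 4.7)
The plan is to mirror the proof of Lemma~\ref{L: whitform}, replacing each compact-support conclusion in the $p$-adic argument with a rapid-decay bound supplied by the Whittaker estimate \eqref{eq: whitestim} and the Schwartz condition on $\Phi$. First I would reduce to establishing absolute convergence at $\tilde g=e$, uniformly for $s$ in any compact subset of $\C$. Conjugating by $\gamma$ and using \eqref{eq: weilH1}, \eqref{eq: weilH3}, \eqref{eq: weilext} exactly as in Lemma~\ref{L: whitform}, the integrand becomes
\[
W_s(\toUbar(u)\gamma)\,\Phi(\xi_n-\bar x)\,\psi(\tfrac12 u_{n,n+1}),
\]
with $u=\sm{x}{y}{0}{\startran{x}}$ and $\bar x$ the bottom row of $x$; by \eqref{eq: weilext2} the $\tilde g$ and $s$ dependence shifts $W$ and $\Phi$ inside a continuous family, so seminorm estimates for $\tilde g=e$ will transfer automatically.

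The main step is to extract rapid decay. Applying Iwasawa $\toUbar(u)=u'\,a\,\kappa$ with $a=\levi(\diag(t_1^{-1},\dots,t_{2n}^{-1}))$ and $\kappa\in K$, I would invoke \eqref{eq: whitestim} for the Whittaker function on $\GLnn$ associated to the restriction of $W_s$ to the Levi. For any $m\ge 0$ and any $\alpha>\expo(\pi)$ this yields rapid decay of $\abs{W_s(\toUbar(u)\gamma)}$ in $\beta=\max(1,\abs{t_2/t_1},\dots,\abs{t_{2n}/t_{2n-1}})$, with a continuous seminorm prefactor in $W$ that is locally uniform in $s$. The inequality \eqref{eq: maineq}, proved in the $p$-adic case via the minor vectors $\minors_i$ but valid verbatim over any local field since it is algebraic, then gives
\[
\log\max(\norm{u},\,\abs{t_1\cdots t_{2n}},\,\max_i\abs{t_i/t_{i+1}})\ll\log\max(\norm{\bar x},\beta).
\]
Combined with the Schwartz decay of $\Phi(\xi_n-\bar x)$ in $\bar x$ and with the rapid decay in $\beta$ coming from the Whittaker bound, this controls $\norm u$ polynomially by $(\norm{\bar x},\beta)$, producing a Schwartz-type majorant for the integrand. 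Absolute convergence locally uniform in $(s,\tilde g)$ follows, and Morera together with pointwise holomorphy in $s$ of the integrand yields that $\whitform(W,\Phi,\tilde g,s)$ is entire.

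The remaining assertions are obtained as in Lemma~\ref{L: whitform}. Smoothness and $(\tilde N,\psi_{\tilde N})$-equivariance follow from $\toG(N')$ normalizing $V$ and $V_\gamma$, the identity $\psi_N(\gamma\toG(nu)\gamma^{-1})=\psi_{N'_{\Levi'}}(n)$ for $n\in N'_{\Levi'}$ and $u\in U'$, and the Weil-representation formulas \eqref{eq: weil1}, \eqref{eq: weil3}; the intertwining relation \eqref{eq: whitform equivariance} is immediate from \eqref{eq: weilext2} and a change of variable in $v$. Moderate growth on $\tilde N\bs\tilde G$ follows by the same substitution $v\mapsto\toG(t')v\toG(t')^{-1}$ used in Lemma~\ref{lem: Aasymp}, replacing the compact-support reduction by standard uniform-moderate-growth bounds for Whittaker functions in $\WhitM(\pi)$. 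The main obstacle I expect is purely technical: tracking constants and continuous seminorms through the Iwasawa decomposition, the change of variables, and the Weil-representation twists so that absolute convergence is realized as a seminorm estimate in $W$ and $\Phi$, which is what is needed for smoothness and moderate growth in $\tilde g$ rather than mere pointwise integrability.
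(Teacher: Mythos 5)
Your proposal matches the route the paper intends: the authors state only that the archimedean statement ``immediately'' follows from the estimate \eqref{eq: whitestim} by an argument ``similar'' to the $p$-adic Lemma~\ref{L: whitform}, and you have filled in precisely that outline — conjugate by $\gamma$, use the inequality \eqref{eq: maineq}, and replace compact support by rapid decay coming from \eqref{eq: whitestim} together with the Schwartz decay of $\Phi$.

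One small caution: the inequality \eqref{eq: maineq} is not literally ``valid verbatim'' over $\R$ or $\C$. Its $p$-adic proof identifies $s_i=\abs{t_1\cdots t_i}$ with the sup-norm of the minor vector $\minors_i$ and then exploits the ultrametric property to get clean inequalities such as $s_n^2\le s_{2n}$ and $s_n s_{n+1}\le\max(1,\norm{\bar x})s_{2n}$. Over an archimedean field the Iwasawa $T$-components are tied to $L^2$-norms of the minor vectors, and these displayed inequalities should be re-derived (e.g.\ via the observation that $s_{2n}^2\ge\sum_i\abs{a_i}^4\ge r^{-1}s_n^4$), picking up dimensional constants. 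This only affects implicit constants, so the logarithmic estimate \eqref{eq: maineq} still holds and your argument goes through, but the phrase ``purely algebraic'' slightly understates the bookkeeping required.
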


Finally, we will prove an analogue of Lemma \ref{lem: Aasymp}.

For a smooth Fr\'echet representation $\pi$ of $M$ let $I(\pi)^\circ$ denote the closed subspace of $I(\pi)$ consisting of
sections which vanish together with all their derivatives on the complement of $G^\circ$.
Also, if $(\sigma,E_\sigma)$ is representation of $V_1$ on a topological vector space, we set
$J_{(V_1,\psi_{V_1})}(\sigma)=\sigma/\overline{\sigma_{V_1,\psi_{V_1}}}$
where $\sigma_{V_1,\psi_{V_1}}$ is the span of $\sigma(v)m-\psi_{V_1}(v)m$, $v\in V_1$, $m\in E_\sigma$.

\begin{lemma} \label{lem: archsurj}
Let $(\pi,E_\pi)$ be a smooth Fr\'echet representation of $M$ of moderate growth (\cite[\S11.5.1]{MR1170566}).
Then the map \eqref{eq: restmap} is surjective.
\end{lemma}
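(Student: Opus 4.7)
The plan is to show that the cokernel of the map in question, namely $J_{V_1,\psi_{V_1}}(I(\pi)/I(\pi)^\circ)$, vanishes. This is equivalent to surjectivity, and it is the natural target since $I(\pi)^\circ$ consists precisely of those sections that vanish to infinite order on the closed complement $Z := G \setminus G^\circ$. So the quotient $I(\pi)/I(\pi)^\circ$ has a description in terms of the transverse jets of sections along $Z$.

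First I would stratify $Z$ into finitely many locally closed $P$-invariant (and $V_1$-stable) pieces $Z_\alpha$, namely the Bruhat-type double cosets not contained in the open set $G^\circ = P\gamma V\toG(G')$, ordered by dimension to yield a decreasing filtration of $Z$ by closed subsets. This induces a finite filtration of $I(\pi)/I(\pi)^\circ$ by closed $V_1$-submodules; the associated graded piece attached to $Z_\alpha$ is (in the smooth Fr\'echet category) a space of smooth sections over $Z_\alpha$ with fiber $E_\pi$ tensored with symmetric powers of the conormal bundle of $Z_\alpha$ in $G$, appropriately twisted by $\modulus_P$. Second, using the fact that $J_{V_1,\psi_{V_1}}$ is exact on short exact sequences of smooth Fr\'echet representations of moderate growth (the relevant version in the archimedean setting going back to du Cloux, and available through results of Aizenbud--Gourevitch--Sayag in this type of framework), the vanishing of $J_{V_1,\psi_{V_1}}(I(\pi)/I(\pi)^\circ)$ is reduced to checking it on each graded piece.

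Third, for each stratum $Z_\alpha$, I would pick a representative $g_\alpha$ and apply a smooth Mackey/Frobenius-reciprocity argument to identify $J_{V_1,\psi_{V_1}}$ of the graded piece with coinvariants of a polynomial $V_1$-representation against $\psi_{V_1}$. The content then becomes the concrete geometric verification: for each $g_\alpha \notin G^\circ$, either there exists $v \in V_1 \cap g_\alpha^{-1} P g_\alpha$ with $\psi_{V_1}(v)\neq 1$ on which the induced action via the Levi part of $\pi$ is trivial (forcing the coinvariants to vanish already at the ``zeroth jet'' level), or else the $V_1$-action at jet order $k$ factors through a finite-dimensional polynomial representation of $V_1$ which, since $\psi_{V_1}$ is a nontrivial additive character, carries no $\psi_{V_1}$-eigenvector. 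Putting these together gives the required vanishing on each graded piece, hence on $I(\pi)/I(\pi)^\circ$ by induction along the filtration.

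The main obstacle is the explicit case analysis in the third step: enumerating representatives of the relevant $P$-double cosets lying in the complement of $G^\circ$ and verifying the character-mismatch condition at every jet order. A secondary technical hurdle is justifying rigorously that the filtration of $I(\pi)/I(\pi)^\circ$ by jet order is indeed compatible with $J_{V_1,\psi_{V_1}}$ in the smooth Fr\'echet setting, which, unlike the $p$-adic situation where exactness of twisted Jacquet modules is automatic, requires invoking Casselman--Wallach-type closure arguments for the image of $\pi(v) - \psi_{V_1}(v)$.
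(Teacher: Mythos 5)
Your geometric observation at the heart of Step 3 --- that surjectivity should follow from the fact that for every $x\notin G^\circ$ the character $\psi_{V_1}$ is nontrivial on $V_1^x:=V_1\cap x^{-1}Ux$ --- is exactly the condition the paper uses. But the route you propose for converting that condition into the desired surjectivity has real gaps in the archimedean setting, and the paper takes a cleaner path that avoids them. Your plan calls for a finite filtration of $I(\pi)/I(\pi)^\circ$ by jet order, together with exactness of $J_{V_1,\psi_{V_1}}$ on Fr\'echet representations to reduce to graded pieces. Neither is readily available: in the archimedean case the jet filtration is infinite (and the associated graded only sees the formal expansion, not the quotient itself), and no exactness statement for the twisted archimedean Jacquet functor of the requisite generality is in the literature --- you flag this as a ``secondary hurdle,'' but it is in fact where the difficulty is concentrated, and the du~Cloux / Aizenbud--Gourevitch--Sayag results you allude to do not say this. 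In effect you are proposing to reprove a nontrivial analytic theorem from scratch.

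What the paper does instead is reduce to the \emph{scalar} case and then cite a ready-made result. One identifies $\swrz(U\bs G;E_\pi)$ with $\swrz(U\bs G)\hat\otimes E_\pi$ (projective tensor product), observes that the averaging map $p_\pi:\swrz(U\bs G;E_\pi)\rightarrow I(\pi)$, $f\mapsto\int_M\modulus_P(m)^{-1/2}\pi(m^{-1})f(m\cdot)\,dm$, is a continuous surjective $G$-intertwiner (here moderate growth of $\pi$ is used), and then quotes Aizenbud--Gourevitch (Smooth transfer of Kloosterman integrals, Theorem 2.2.15) for the purely scalar statement $\swrz(U\bs G)=\swrz(U\bs G^\circ)+\swrz(U\bs G)_{V_1,\psi_{V_1}}$, whose hypotheses are precisely your character-mismatch condition on the $V_1^x$. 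Tensoring this decomposition with $E_\pi$ (a one-line application of Grothendieck's result on tensoring with a sum of closed subspaces) and pushing forward through $p_\pi$ gives the lemma, since $p_\pi$ carries $\swrz(U\bs G^\circ;E_\pi)$ into $I(\pi)^\circ$ and is $V_1$-equivariant. This split --- offload all the analytic filtration/jet/closure machinery to an existing scalar theorem, keep only the tensor-product bookkeeping and the geometric check --- is the missing ingredient in your proposal; without it, the analytic claims in your Steps 1--2 would need a proof of their own, essentially amounting to reproving the Aizenbud--Gourevitch theorem.
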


\begin{proof}
Let $\swrz(U\bs G;E_\pi)$ be the space of Schwartz functions on $U\bs G$ with values in $E_\pi$.\footnote{For general facts
about Schwartz spaces see \cite{MR1100992} and \cite{MR2418286}.}
We can identify $\swrz(U\bs G;E_\pi)$ with the projective tensor product $\swrz(U\bs G)\hat\otimes E_\pi$.
Let $p_\pi:\swrz(U\bs G;E_\pi)\rightarrow I(\pi)$ be given by $f\mapsto\int_M\modulus_P(m)^{-\frac12}\pi(m^{-1})f(mg)\ dm$.
(This is well-defined and continuous since $\pi$ is of moderate growth.)
Then $p_\pi$ is a strict surjective $G$-intertwining homomorphism. (In fact, already the restriction of $p_\pi$ to $C_c^\infty(U\bs G;E_\pi)$ is onto.)
Now, it follows from \cite[Theorem 2.2.15]{MR3022961} that $\swrz(U\bs G)=\swrz(U\bs G^\circ)+\swrz(U\bs G)_{V_1,\psi_{V_1}}$
where we view $\swrz(U\bs G^\circ)$ as the closed subspace of $\swrz(U\bs G)$ consisting of functions which vanish together with all their
derivatives on the complement of $G^\circ$. To check that the conditions of [ibid.] are satisfied we only need to note that
$\psi_{V_1}$ is non-trivial on $V_1^x:=V_1\cap x^{-1}Ux$
for any $x\notin G^\circ$ -- a fact which was observed in \cite[\S 6.1]{MR1671452}. Indeed, this will imply that
$(\mathfrak{F}\otimes\psi_{V_1}^{-1})^{V_1^x}=0$ for any algebraic (finite-dimensional) representation $\mathfrak{F}$ of $V_1^x$ (since $\mathfrak{F}(v)$ acts
unipotently for any $v\in V_1^x$).
Thus, $\swrz(U\bs G;E_\pi)=\swrz(U\bs G^\circ;E_\pi)+\swrz(U\bs G)_{V_1,\psi_{V_1}}\hat\otimes E_\pi$
\cite[Ch.~I, \S1, no.~2, Prop.~3]{MR0075539}. Applying $p_\pi$ we get the required assertion
since $p_\pi$ maps $\swrz(U\bs G^\circ;E_\pi)$ into $I(\pi)^\circ$.
\end{proof}

\begin{lemma} \label{lem: Aasymparch}
Suppose that $\pi\in\Irr_{\gen}\GLnn$ and $\alpha\in\R$ with $\alpha>\expo(\pi)$.
Then for any $W\in\Ind(\WhitML(\pi))$, $\Phi\in \swrz(F^n)$, a compact set $D\subset\C$ and $m\ge1$ we have
\[
\abs{\whitform(W,\Phi,\animg{t'}\animg k,s)}\ll_{W,\Phi,D,m}
\modulus_{B'}(t')^{\frac12}\abs{\det(t)}^{\Re s+\frac12-\alpha}
\prod_{i=1}^n(1+\abs{t_it_{i+1}^{-1}})^{-m}
\]
for all $t'=\levi'(t)\in T'$, (with $t=\diag(t_1,\dots,t_n)$ and $t_{n+1}=1$), $k\in K'$, $s\in D$.
\end{lemma}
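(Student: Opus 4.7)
The plan is to mirror the $p$-adic proof of Lemma~\ref{lem: Aasymp}, replacing the compact-support arguments by Schwartz-type estimates while tracking continuous seminorms. First I would reduce to the case $W\in I(\pi)^\circ$. The Fourier--Jacobi integral $\whitform(W,\Phi,\cdot,s)$ factors through the twisted Jacquet module $J_{V_1,\psi_{V_1}}(I(\pi))$, because the inner integration over $V_\gamma\bs V$ incorporates an averaging against $\psi_{V_1}$ over $V_1$. By Lemma~\ref{lem: archsurj} the canonical map $J_{V_1,\psi_{V_1}}(I(\pi)^\circ)\to J_{V_1,\psi_{V_1}}(I(\pi))$ is surjective; combined with the open mapping theorem for Fr\'echet spaces this produces $W'\in I(\pi)^\circ$ with $\whitform(W,\Phi,\cdot,s)=\whitform(W',\Phi,\cdot,s)$ and with continuous seminorms of $W'$ bounded by continuous seminorms of $W$.

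Next, performing the same substitution $v\mapsto\toG(t')v\toG(t')^{-1}$ as in the proof of Lemma~\ref{lem: Aasymp}, and using \eqref{eq: weilext2} together with \eqref{eq: weil1}, one obtains
\[
\whitform(W,\Phi,\animg{t'},s)=\beta_\psi(t')\abs{\det t}^{\tfrac12-n}\int_{V_\gamma\bs V}W_s(\toLevi(t)\gamma v)\,[\wevinv(v)\Phi](t_n\xi_n)\,dv.
\]
For $W\in I(\pi)^\circ$ the function $v\mapsto W_s(\toLevi(t)\gamma v)$ on $V_\gamma\bs V$ is Schwartz, with seminorms controlled by $\modulus_P(\toLevi(t))^{\tfrac12}\abs{\det t}^{\Re s}$ times a continuous seminorm of $W$ and a Whittaker-type factor for $\pi$ evaluated at $\toM(t)$; similarly $v\mapsto[\wevinv(v)\Phi](t_n\xi_n)$ is Schwartz on $V_\gamma\bs V$ with seminorms polynomially bounded in $\abs{t_n}$. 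Integrating the product over $v$ bounds it by a continuous seminorm in $(W,\Phi)$ times $\abs{W_s(\toLevi(t))}\cdot\abs{\Phi(t_n\xi_n)}$ up to polynomial factors that can be absorbed into the Schwartz decay.

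Finally, applying the archimedean Whittaker bound \eqref{eq: whitestim} to $W^\pi(\toM(t))$ with $\toM(t)=\diag(t_1,\dots,t_n,1,\dots,1)$, and using the identity $\modulus_{B'}(t')^{\tfrac12}=\modulus_B(\toLevi(t))^{\tfrac12}\abs{\det t}^{-n}$ to reassemble the modular prefactor, yields the stated decay $\modulus_{B'}(t')^{\tfrac12}\abs{\det t}^{\Re s+\tfrac12-\alpha}\prod_{i=1}^{n-1}(1+\abs{t_it_{i+1}^{-1}})^{-m}$; the extra factor $(1+\abs{t_n})^{-m}$, which is the $i=n$ term of the target product since $t_{n+1}=1$, arises from the Schwartz decay of $\Phi(t_n\xi_n)$ (equivalently from the $i=n$ factor of \eqref{eq: whitestim} applied to $\toM(t)$).

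The main obstacle will be making precise the Schwartz-seminorm control of $v\mapsto W_s(\toLevi(t)\gamma v)$ uniformly in $t$, i.e.\ showing that the Fr\'echet structure on $I(\pi)^\circ$ restricts to a Schwartz structure on $V_\gamma\bs V$ whose seminorms depend on $t$ only polynomially with fixed exponent. This is really the archimedean content of the Schwartz-induction theory used in the proof of Lemma~\ref{lem: archsurj}; while plausible from the general framework of du~Cloux and Aizenbud--Gourevitch--Sayag, it requires a careful verification of the Schwartz topology in this specific setup.
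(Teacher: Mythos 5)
Your outline mirrors the paper's: reduce to sections supported in the open cell using the twisted Jacquet module and Lemma~\ref{lem: archsurj}, then run the change of variable $v\mapsto\toG(t')v\toG(t')^{-1}$ and apply the archimedean Whittaker estimate \eqref{eq: whitestim}. However, the first reduction as you state it has a gap, and that gap is exactly what the paper's archimedean argument is built to close.

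You apply Lemma~\ref{lem: archsurj} and the open mapping theorem to $I(\pi)$ itself, producing a single $W'\in I(\pi)^\circ$ with $\whitform(W,\Phi,\cdot,s)=\whitform(W',\Phi,\cdot,s)$. But the Fourier--Jacobi coefficient at parameter $s$ only factors through the twisted Jacquet module of $I(\pi,s)$ for the $V_1$-action $I(s,\cdot)$, and this action genuinely depends on $s$: one has $I(s,v_1)W(x)=W(xv_1)(\wgt{xv_1}/\wgt{x})^s$, and $\wgt{\cdot}$ is not right $V_1$-invariant. Thus a replacement $W'$ obtained from the Jacquet module at one value of $s$ does not satisfy the required identity at other $s$. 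To make your argument go through one needs an $s$-dependent family $W'(s)\in I(\pi)^\circ$ whose seminorms are controlled locally uniformly for $s\in D$; surjectivity together with the open mapping theorem does not by itself provide such a family, and your sketch does not explain how to get it.

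The paper supplies precisely this missing ingredient via the auxiliary smooth Fr\'echet representation $\Pi=\swrz(A_M)\hat\otimes\WhitML(\pi^1)$ of $M$ (which has moderate growth), the family of continuous $G$-intertwining surjections $\mathcal{F}_s:I(\Pi)\to I(\pi,s)$, and a continuous (in general nonlinear) section $\mathfrak{s}:I(\Pi)\to I(\Pi)^\circ$ obtained by applying Lemma~\ref{lem: archsurj} and the open mapping theorem to $I(\Pi)$ rather than to $I(\pi)$. The point is that the $V_1$-action on $I(\Pi)$ by right translation is a single $s$-independent action, while $\mathcal{F}_s$ intertwines it with $I(s,\cdot)$, so that $\whitform(\mathcal{F}_s(W'),\Phi,\cdot,s)$ factors through $J_{V_1,\psi_{V_1}}(I(\Pi))$ for every $s$. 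One then picks $W'\in I(\Pi)$ with $\mathcal{F}_0(W')=W$, notes that $\mathcal{F}_s(W'_s)=W$ for all $s$, and replaces $W$ by $\mathcal{F}_s(\mathfrak{s}(W'_s))\in I(\pi)^\circ$; since $s\mapsto W'_s$ and $\mathfrak{s}$ are continuous, the seminorms of this family are bounded uniformly on compact $D$, which is what the lemma needs. The second half of your sketch --- Schwartz decay in $v$ for sections in $I(\pi)^\circ$, and the application of \eqref{eq: whitestim} to $\toM(t)=\diag(t_1,\dots,t_n,1,\dots,1)$ producing the factor $\modulus_{B'}(t')^{1/2}\abs{\det t}^{\Re s+1/2-\alpha}\prod_{i=1}^{n}(1+\abs{t_it_{i+1}^{-1}})^{-m}$ --- is essentially right and matches the paper. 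The missing idea is the auxiliary representation $\Pi$.
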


\begin{proof}
Let $M^1=\{\levi(m):\abs{\det m}=1\}$ so that $M=M^1\times A_M$ where $A_M=\{\levi(tI_{2n}):t\in\R_{>0}\}$.
Let $\pi^1$ be the restriction of $\pi$ to $M^1$ realized on $\WhitML(\pi^1)=\{W\rest_{M^1}:W\in\WhitML(\pi)\}$.
Consider the smooth Fr\'echet representation $\Pi=\swrz(A_M)\hat\otimes\WhitML(\pi^1)$ of $M$ which we identify with the space of
of functions $W':M\rightarrow\C$ such that for all $a\in A_M$, $W'[a]:m\mapsto W'(am)$
belongs to $\WhitML(\pi^1)$ and $a\mapsto W'[a]$ is a Schwartz function on $A_M$ with values in $\WhitML(\pi^1)$.
Likewise, we can identify $I(\Pi)$ with the space of left $U$-invariant functions $W':G\rightarrow\C$ such that $m\mapsto W'(mg)$ belongs to $\WhitML(\pi^1)$
for all $g\in G$ and $(a,k)\mapsto W'(\cdot ak)$ is a Schwartz function on $A_M\times K$ with values in $\WhitML(\pi^1)$.
For any $s\in\C$ define a continuous surjective $G$-intertwining map $\mathcal{F}_s:I(\Pi)\rightarrow I(\pi,s)$ by
\[
\mathcal{F}_s(W')(g)=\int_{A_M}\modulus_P(a)^{-\frac12}W'_{-s}(ag)\ da.
\]
Thus, $\whitform(\mathcal{F}_s(W'),\Phi,\cdot,s)$ depends only on the image of $W'$ in $J_{V_1,\psi_{V_1}}(I(\Pi))$.
Moreover, $\mathcal{F}_s(I(\Pi)^\circ)\subset I(\pi)^\circ$ and $s\mapsto\mathcal{F}_s$ is a continuous map
(in the topology of bounded convergence).
Given $W\in I(\pi)$ let $W'\in I(\Pi)$ be such that $\mathcal{F}_0(W')=W$. Then $W=\mathfrak{F}_s(W'_s)$ for all $s\in\C$.
By Lemma \ref{lem: archsurj} the map
\[
I(\Pi)^\circ\xrightarrow{j} J_{V_1,\psi_{V_1}}(I(\Pi))
\]
is surjective, and hence there exists a continuous (not necessarily linear) map $\mathfrak{s}:I(\Pi)\rightarrow I(\Pi)^\circ$
such that $j\circ\mathfrak{s}$ is the projection $I(\Pi)\rightarrow J_{V_1,\psi_{V_1}}(I(\Pi))$ (\cite[Ch. II, \S4, no.~7, Proposition 12]{MR910295}).
Thus, $\whitform(W,\Phi,\cdot,s)=\whitform(\mathfrak{F}_s(\mathfrak{s}(W'_s)),\Phi,\cdot,s)$.
The same argument as in Lemma \ref{lem: Aasymp} (using \eqref{eq: whitestim}) yields the proof.
\end{proof}

\subsection{Local integrals}

Now let $F$ be either $p$-adic or archimedean, $\pi\in\Irr_{\gen}\Levi$ and $\tilde\sigma\in\Irr_{\gen,\psi_{\tilde N}^{-1}}\tilde G$
with Whittaker model $\WhitGd(\tilde\sigma)$.
Following Ginzburg--Rallis--Soudry \cite{MR1675971}, for any $\tilde W\in \WhitGd(\tilde\sigma)$, $W\in\Ind(\WhitML(\pi))$ and $\Phi\in\swrz(F^n)$ define
the local Shimura type integral
\begin{equation}\label{eq: localinner}
\tilde{J}(\tilde W,W,\Phi,s):=\int_{N'\bs G'}\tilde W(\tilde g)\whitform(W,\Phi,\tilde g,s)\ dg.
\end{equation}
The analytic properties of this integral were worked out by Ginzburg-Rallis-Soudry \cite[\S6.3]{MR1675971}, \cite{MR1671452} and subsequently by Kaplan
\cite{Kaplan}.
In particular, $\tilde{J}$ converges in some right-half plane (depending only on
$\pi$ and $\tilde\sigma$) and admits a meromorphic continuation in $s$.
Moreover, for any $s\in\C$ we can choose $\tilde W$, $W$ and $\Phi$ such that $\tilde{J}(\tilde W,W,\Phi,s)\ne0$.
Finally, we have local functional equations
\begin{equation} \label{eq: local functional equation}
\tilde{J}(\tilde W,M(s)W,\Phi,-s)=\omega_\pi((-1)^n 2) \abs{2}^{2ns}\frac{\gamma(\tilde\sigma\otimes\pi,s+\frac12,\psi)}
{\gamma(\pi,s,\psi)\gamma(\pi,\wedge^2,2s,\psi)}
\tilde{J}(\tilde W,W,\Phi,s).
\end{equation}
We note once again that our choice of characters differs from that of Ginzburg-Rallis-Soudry.

By \eqref{eq: whitform equivariance}, for any $ x\in G'$ we have
\begin{equation} \label{eq: invariance of J}
\tilde{J}(\tilde\sigma(\animg{x})\tilde W,I(s,\toG (x))W,\weinv(\animg x)\Phi,s)=\tilde{J}(\tilde W,W,\Phi,s).
\end{equation}
In particular, at any point of holomorphy $s$, for any $W$,
\begin{equation} \label{eq: Jinv}
\text{the functional $\tilde W\mapsto\tilde{J}(\tilde W,W,\Phi,s)$ belongs to the contragredient of $\WhitGd(\tilde\sigma)$.}
\end{equation}

If $F$ is $p$-adic and $\tilde\sigma$ is supercuspidal then the integral defining $\tilde{J}(\tilde W,W,\Phi,s)$ is
absolutely convergent (in fact compactly supported) and hence $\tilde{J}(\tilde W,W,\Phi,s)$ is entire.

More generally, we have:
\begin{lemma} \label{lem: Jholom}
Suppose that $\tilde\sigma$ is square-integrable and let $\alpha=\expo(\pi)$. Then there exists $\delta>0$ such that
$\tilde{J}(\tilde W,W,\Phi,s)$ is absolutely convergent locally uniformly (hence, holomorphic) for $\Re s\ge\alpha-\frac12-\delta$.
Similarly, for any $W\in\Ind(\WhitML(\pi))$, $\d{W}\in\Ind(\WhitMLd(\pi))$, $\Phi,\d{\Phi}\in\swrz(F^n)$
\[
\tilde{J}(\whitformd(\d{W},\d{\Phi},\cdot,-\frac12),W,\Phi,s)
\]
converges absolutely and locally uniformly (and hence, is holomorphic) for $\Re s>2\alpha-\frac12$.
In particular, if $\pi$ is unitarizable then $\tilde{J}(\tilde W,W,\Phi,s)$ is holomorphic at $s=\frac12$ for any
$\tilde W\in\desinv(\pi)$, $W\in\Ind(\WhitML(\pi))$ and $\Phi\in\swrz(F^n)$.
\end{lemma}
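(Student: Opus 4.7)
The plan is to pass to the Iwasawa decomposition on $N'\bs G'$, combine the support and growth estimates of Lemmas~\ref{lem: Aasymp} and~\ref{lem: Aasymparch} on $\whitform$ with the standard Whittaker asymptotics for a square-integrable representation (Casselman in the $p$-adic case, Wallach in the archimedean case) for $\tilde W$, and book-keep the result by a scaling argument on $T'$. Concretely, writing $G'=N'T'K'$ the quotient measure is $\modulus_{B'}(t')^{-1}\,dt'\,dk$; I would parametrize $t=\diag(t_1,\ldots,t_n)$ via $s_j=\abs{t_j/t_{j+1}}$ for $j<n$ and $s_n=\abs{t_n}$, so that $\prod_j d^*s_j$ is Haar on $T'$ and
\[
\abs{\det t}=\prod_{j=1}^n s_j^{j},\qquad \modulus_{B'}(t')=\prod_{j=1}^n s_j^{j(2n+1-j)}.
\]
By Lemma~\ref{lem: Aasymp} (resp.~\ref{lem: Aasymparch}) the integrand in~\eqref{eq: localinner} is supported in, resp.~rapidly decays outside, the region where all $s_j$ are bounded above, so only the behavior as some $s_j\to 0^+$ can obstruct convergence.

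For part~(1), with $\tilde\sigma$ square-integrable, I would invoke the Casselman--Wallach asymptotic expansion, which yields some $\eta>0$ independent of $\tilde W\in\WhitGd(\tilde\sigma)$ such that
\[
\abs{\tilde W(\animg{t'}\animg{k})}\ll \modulus_{B'}(t')^{\frac12+\eta}
\]
uniformly in $k\in K'$ (with appropriate Schwartz factors in the simple-root directions in the archimedean case). Multiplying this against the bound on $\whitform$ from Lemma~\ref{lem: Aasymp}/\ref{lem: Aasymparch} (for $\alpha>\expo(\pi)$) and against the Jacobian $\modulus_{B'}(t')^{-1}$, the integrand is majorized by
\[
\modulus_{B'}(t')^{\eta}\,\abs{\det t}^{\Re s+\tfrac12-\alpha}=\prod_{j=1}^n s_j^{\,j\,[(2n+1-j)\eta+\Re s+\tfrac12-\alpha]}
\]
times Schwartz decay. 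The critical exponent is at $j=n$: integrability near $s_n=0^+$ requires $(n+1)\eta+\Re s+\tfrac12-\alpha>0$. Taking $\alpha$ close enough to $\expo(\pi)$ and any $\delta<(n+1)\eta$ gives absolute, locally uniform convergence on $\Re s\ge\expo(\pi)-\tfrac12-\delta$.

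For part~(2), I would apply Lemma~\ref{lem: Aasymp}/\ref{lem: Aasymparch} twice: once with $s=-\tfrac12$ to bound $\whitformd(\d W,\d\Phi,\cdot,-\tfrac12)$ by $\modulus_{B'}(t')^{1/2}\abs{\det t}^{-\alpha}$, and once to bound $\whitform(W,\Phi,\cdot,s)$. Multiplying these and dividing by $\modulus_{B'}(t')$ yields a bound
\[
\abs{\det t}^{\Re s+\tfrac12-2\alpha}=\prod_{j=1}^n s_j^{\,j(\Re s+\tfrac12-2\alpha)}
\]
times Schwartz decay, and integrability near $s_j=0^+$ requires $\Re s>2\alpha-\tfrac12$; letting $\alpha\downarrow\expo(\pi)$ yields the stated region. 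For part~(3), unitarizability of $\pi$ gives $\expo(\pi)<\tfrac12$, so I would choose $\alpha\in(\expo(\pi),\tfrac12)$, putting $s=\tfrac12$ strictly inside the convergence region of part~(2). Since by construction every Whittaker function in $\desinv(\pi)$ is a (finite sum of) $\whitformd(\d W,\d\Phi,\cdot,-\tfrac12)$, holomorphy at $s=\tfrac12$ follows.

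The main obstacle will be to pin down the Casselman--Wallach Whittaker asymptotics in a form compatible with the support/Schwartz data of Lemmas~\ref{lem: Aasymp}/\ref{lem: Aasymparch}, uniformly over $k\in K'$ and in both the $p$-adic and archimedean cases; once that is in place, the scaling bookkeeping described above is routine.
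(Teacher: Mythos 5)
Your proof follows the same route the paper indicates in its one-line hint: pass to the Iwasawa decomposition on $N'\bs G'$, combine the support/decay bounds for $\whitform$ from Lemmas~\ref{lem: Aasymp} and~\ref{lem: Aasymparch} with the Casselman/Wallach asymptotics for the square-integrable $\tilde W$ (this is precisely what the cited \cite[Theorem 15.2.5 and Lemma 15.2.3]{MR1170566} supply, together with moderate growth), and bookkeep the exponents on $T'$. The change of variables to $s_j$, the formulas for $\abs{\det t}$ and $\modulus_{B'}(t')$, and the resulting exponent inequalities are all correct, so this is a faithful expansion of the paper's sketch.
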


Indeed, the second part follows Lemmas \ref{lem: Aasymp} and \ref{lem: Aasymparch} using the Iwasawa decomposition in the integral defining $\tilde{J}$.
For the first part we also use \cite[Theorem 15.2.5 and Lemma 15.2.3]{MR1170566} applied to $\tilde G$ (as well as the moderate growth
of $\tilde W$).

Finally, if $F$ is $p$-adic, $p\ne 2$, $\pi$, $\tilde\sigma$ and $\psi$ are unramified, $W^\circ$ and $\Phi_0$ are as in
Lemma \ref{L: whitform} part \ref{part: unramwhitform} and $\tilde W^\circ$ is $K'$-invariant with $\tilde W^\circ(e)=1$ then
\begin{equation} \label{eq: Junram}
\tilde{J}(\tilde W^\circ,W^\circ,\Phi_0,s)=\Delta_{\Sp_n}(1)^{-1}\frac{L_{\psi^{-1}}(\tilde\sigma\otimes\pi,s+\frac12)}{L(s+1,\pi)L(2s+1,\pi,\wedge^2)}.
\end{equation}
The factor $\Delta_{\Sp_n}(1)^{-1}$ shows up because we use the Tamagawa measure.
The $L$-factor in the numerator is the one defined by Ginzburg--Rallis--Soudry in \cite{MR1675971} (with $\psi^{-1}$ replaced by $\psi_{\frac12}^{-1}$).

\subsection{Abstract and explicit local descent}
Let $\pi\in\Irr_{\gen}\GLnn$, considered also as a representation of $\Levi$ via $\levi$.
By \cite[Theorem in \S1.3]{MR1671452}, for any non-zero subrepresentation $\pi'$ of $\Ind(\WhitML(\pi))$
there exists $W\in\pi'$ and $\Phi\in\swrz(F^n)$ such that $\whitform(W,\Phi,\cdot,0)\not\equiv0$.

\begin{remark}
We point out the following inaccuracy in the proof of [loc.~cit.] (using its notation).
The group $\overline{X}(0,n)$ is not normalized by the image of $\ell^i$.
Therefore, the root killing argument in [ibid., p. 861-2] only reduces the non-vanishing of $\whitform$ on $\pi'$ to the non-vanishing of
\[
\int W(\toUbar(u))\psi(\frac12u_{n,n+1})\,du,
\]
where $u$ is integrated over the space of strictly upper triangular matrices $\oplus_{j>i} F \one^{\symspace}_{i,j}\subset \symspace$.
This space admits a filtration
\[
\symspace^{1,1}\supset\symspace^{1,2}\supset\ldots\supset \symspace^{1,n+1}=\symspace^{2,1}\supset\ldots\supset
\symspace^{2,n}=\symspace^{3,1}\supset\ldots\supset\symspace^{2n-1,1}\supset \symspace^{2n-1,2}=\symspace^{2n,1}=0
\]
where $\symspace^{k,l}$, $k=1,\dots,2n$, $l=1,\dots,n+1-[k/2]$ is the span of $\one^{\symspace}_{i,j}$ where $i+j\le 2n+1$ and either $j-i>k$
or $j-i=k$ and $i\ge l$.
In this filtration, any two consecutive spaces $\symspace^{k,l}$ and $\symspace^{k,l+1}$ differ by the one-dimensional space $F\one^{\symspace}_{l,k+l}$. Denote by
$I_{k,l}$ the above integration over the domain $\symspace^{k,l}$. Then using the same root killing argument of
[ibid], one can show that the non-vanishing of $I_{k,l}$ on $\pi'$ is equivalent to the non-vanishing of $I_{k,l+1}$.
As $I_{2n,1}$ is clearly non-vanishing, we get the non-vanishing of $I_{1,1}$, thus of $\whitform$.
Note that the presence of $\psi(\frac12 u_{n,n+1})$ plays no role in the argument.

Other than that, the proof of [ibid.] applies equally well in the Archimedean case.
\end{remark}

Assume now that $\pi\in\Irr_{\gen,\meta}\GLnn$.
By Proposition \ref{prop: M1/2} $M(s)$ is holomorphic at $s=\frac12$.
Denote by $\des(\pi)$ the space of Whittaker functions on $\tilde G$ generated by $\whitform(M(\frac12)W,\Phi,\cdot,-\frac12)$,
$W\in \Ind(\WhitML(\pi))$, $\Phi\in\swrz(F^n)$.
By the above $\des(\pi)\ne0$.

Let $\pi'$ be the image of $M(\frac12)$.
By \eqref{eq: whitform equivariance} the space $\des(\pi)$ is canonically a quotient of
the $\tilde G$-module $J_V(\pi'\otimes\wevinv)$ of $V$-coinvariant of $\pi'\otimes\wevinv$.
We view $J_V(\pi'\otimes\wevinv)$ as the ``abstract'' descent and $\des(\pi)$ as the ``explicit'' descent.

\section{A technical property} \label{sec: technical}
Let $F$ be a local field.
In \cite[\S2]{LMao5} we defined a regularized integral
\[
\stint_{N'}f(\animg{n})\psi_{\tilde N}(n)\ dn
\]
for any matrix coefficient of an irreducible representation of $\tilde G$.
In the $p$-adic case we have
\[
\stint_{N'}f(\animg{n})\psi_{\tilde N}(n)\ dn=
\int_{N_1}f(\animg{n})\psi_{\tilde N}(n)\ dn
\]
for any sufficiently large compact open subgroup $N_1$ of $N'$
(and in fact, this is valid for any $f$ which is bi-invariant under some open subgroup of $G'$ which splits trivially in $\tilde G$).
In the archimedean case, we gave an ad-hoc definition.

Let $\pi\in\Irr_{\gen,\meta}\GLnn$.
We say that $\pi$ is \emph{\good}\ if the following conditions are satisfied for all $\psi$:
\begin{enumerate}
\item $\des(\pi)$ is irreducible.
\item $\tilde{J}(\tilde W,W,\Phi,s)$ is holomorphic at $s=\frac12$ for any $\tilde W\in\desinv(\pi)$, $W\in\Ind(\WhitML(\pi))$ and $\Phi\in\swrz(F^n)$.
(By Lemma \ref{lem: Jholom} this is automatic if $\pi$ is unitarizable.)
\item For any $\tilde W\in\desinv(\pi)$,
\begin{equation} \label{eq: factorsthru}
\tilde{J}(\tilde W,W,\Phi,\frac12)\text{ factors through the map }
(W,\Phi)\mapsto(\whitform(M(\frac12)W,\Phi,\cdot,-\frac12)).
\end{equation}
In other words (by \eqref{eq: invariance of J}) there is a non-degenerate $\tilde G$-invariant pairing $[\cdot,\cdot]$ on $\desinv(\pi)\times\des(\pi)$ such that
\[
\tilde{J}(\tilde W,W,\Phi,\frac12)=[\tilde W,\whitform(M(\frac12)W,\Phi,\cdot,-\frac12)]
\]
for any $\tilde W\in\desinv(\pi)$, $W\in\Ind(\WhitML(\pi))$ and $\Phi\in\swrz(F^n)$.
\end{enumerate}

In the next section we will show that this condition is satisfied for the local components of certain automorphic representations.

Suppose that $\pi\in\Irr_{\gen,\meta}\GLnn$ is good and let $\tilde\pi=\desinv(\pi)$.
Then by \cite[\S2]{LMao5} there exists a non-zero constant $c_\pi$ such that for any $\tilde W\in\desinv(\pi)$, $W\in\Ind(\WhitML(\pi))$ and $\Phi\in\swrz(F^n)$ we have
\begin{equation} \label{eq: main}
\stint_{N'}\tilde{J}(\tilde\pi(\animg n)\tilde W,W,\Phi,\frac12)\psi_{\tilde N}( n)\,dn
=c_\pi\tilde W(e)\whitform(M(\frac12)W,\Phi,e,-\frac12).
\end{equation}
In other words, for any $W\in\Ind(\WhitML(\pi))$, $\d{W}\in\Ind(\WhitMLd(\pi))$, $\Phi,\d{\Phi}\in\swrz(F^n)$
\begin{multline} \label{eq: main1}
\stint_{N'}\tilde{J}(\whitformd(M(\frac12)I(\frac12,\toG(n))\d{W},\we(\animg{n})\d{\Phi},\cdot,-\frac12),W,\Phi,\frac12)\psi_{\tilde N}( n)\,dn
\\=c_\pi\whitformd(M(\frac12)\d{W},\d{\Phi},e,-\frac12)\whitform(M(\frac12)W,\Phi,e,-\frac12).
\end{multline}

\begin{remark}
The non-vanishing of $c_\pi$ in the archimedean place is implicit in \cite[\S2]{LMao5},
but it is reduced to the square-integrable case. In the latter case it can be proved as in \cite[\S6.3]{1203.0039}.
Namely, suppose that $\tilde\pi\in\Irr_{\sqr,\gen,\psi_{\tilde N}}\tilde G$ and let $W\in\WhitG(\tilde\pi)$.
Let $\phi$ be any genuine function in $C_c^\infty(\tilde N\bs\tilde G;\psi_{\tilde N}^{-1})$ such that
$\int_{N'\bs G'}W(\animg{g})\phi(\animg{g})\ dg\ne0$ and let
$f\in C_c^\infty(\tilde G)$ be a genuine function such that $\phi=\int_{N'}f(\animg{n}\cdot)\psi_{\tilde N}(n)\ dn$.
Then $\int_{G'}W(\animg{g})f(\animg{g})\ dg=\int_{N'\bs G'}W(\animg{g})\phi(\animg{g})\ dg\ne0$.
Let $f_1$ be the projection of $f$ into the $\tilde\pi\times\d{\tilde\pi}$-isotypic component
$L^2(\tilde G)_{\tilde\pi\times\d{\tilde\pi}}$ of $L^2(\tilde G)$ (as a representation of $\tilde G\times\tilde G$) and let $f_2=f-f_1$.
Then $\int_{G'}W(\animg{g})f_1(\animg{g})\ dg$ is absolutely convergent and hence so is $\int_{G'}W(\animg{g})f_2(\animg{g})\ dg$. However,
$\int_{G'}W(\animg{g})f_2(\animg{g})\ dg=0$ for otherwise we would get a non-zero pairing between $\tilde\pi$ and the orthogonal complement
of $L^2(\tilde G)_{\tilde\pi\times\d{\tilde\pi}}$ in $L^2(\tilde G)$ which is impossible.
Thus, $\int_{G'}W(\animg{g})f_1(\animg{g})\ dg\ne0$. Since $\int_{G'}W(\animg{g})f_1(\animg{g})\ dg=
\int_{N'\bs G'}W(\animg{g})\int_{N'}f_1(\animg{n}\animg{g})\psi_{\tilde N}(n)\ dn\ dg$ we infer that
$\int_{N'}f'(\animg{n})\psi_{\tilde N}(n)\ dn\ne0$ for some matrix coefficient $f'$ of $\tilde\pi$, as required.
\end{remark}

\begin{remark}
The constant $c_\pi$ depends on the choice of Haar measures on $U$
(for the intertwining operator $M(s)$) and $G'$ (but not on $N'$ and $V_\gamma\bs V$).
Recall that by our conventions, the Haar measures on $U$ and $G$ depend on $\psi$ (which determines a self-dual
Haar measure on $F$).
For $U$ we use the identification $U\cong F^{n(2n+1)}$ via the basis $\one^{\symspace_{2n}}_{i,j}$, $i+j\le 2n+1$.
For $G'$ we take the gauge form given (up to a sign) by the wedge of a Chevalley basis of the Lie algebra.
\end{remark}

The next two results are essentially in \cite{MR1954940} and \cite{MR1722953}.

\begin{proposition}
Let $F$ be a $p$-adic field. Then any $\pi\in\Irr_{\mcusp}\GLnn$ is \good.
\end{proposition}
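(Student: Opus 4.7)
The plan is to verify, in order, the three conditions in the definition of \emph{good} for $\pi = \tau_1 \times \cdots \times \tau_k \in \Irr_{\mcusp}\GLnn$, where the $\tau_i \in \Irr_{\cusp,\meta}\GL_{2n_i}$ are pairwise inequivalent supercuspidal representations of metaplectic type.

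First I would establish the irreducibility of $\des(\pi)$. The route is to identify the image $\pi' = M(\pi,\frac12)\Ind(\WhitML(\pi))$, and then to compute the abstract descent $J_V(\pi' \otimes \wevinv)$. For each $\tau_i$, Proposition \ref{prop: metasqr} gives that $I(\tau_i,\frac12)$ is reducible and that the image of $M(\tau_i,\frac12)$ is an irreducible square-integrable representation $\delta_i$ on the corresponding smaller symplectic group. Compatibility of intertwining operators with induction in stages then identifies $\pi'$ with a parabolic induction from $\delta_1 \otimes \cdots \otimes \delta_k$; its irreducibility follows from pairwise inequivalence of the $\tau_i$ via the kind of argument used in Lemma \ref{lem: irred GL_n}. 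The relative cuspidality of $\pi$ furnished by Lemma \ref{lem: relsc} is precisely what makes the Fourier--Jacobi computation of $J_V(\pi' \otimes \wevinv)$ tractable: following \cite{MR1954940} (cf.~\cite{MR1722953}) one obtains that the abstract descent is irreducible and in fact supercuspidal on $\tilde G$. Since $\des(\pi)$ is a nonzero quotient of the abstract descent through the map $(W,\Phi) \mapsto \whitform(M(\frac12)W,\Phi,\cdot,-\frac12)$, this quotient map is an isomorphism, and $\des(\pi)$ is irreducible.

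Condition (2), the holomorphicity of $\tilde{J}(\tilde W, W, \Phi, s)$ at $s = \frac12$, then comes for free: $\pi$ is unitarizable as a product of unitary supercuspidals, so Lemma \ref{lem: Jholom} applies directly. (Alternatively, once $\desinv(\pi)$ is known to be supercuspidal, the integrand in \eqref{eq: localinner} is compactly supported in $g$ uniformly in $s$, making $\tilde{J}$ entire.) For condition (3), the factorization, the point is that by the equivariance \eqref{eq: invariance of J} the functional $(W,\Phi) \mapsto \tilde{J}(\tilde W, W, \Phi, \frac12)$, for fixed $\tilde W \in \desinv(\pi)$, descends to a linear functional on the abstract descent $J_V(\pi' \otimes \wevinv)$. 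By the first step this space is identified with $\des(\pi)$ via $\whitform(M(\frac12)\cdot,\cdot,\cdot,-\frac12)$, so $\tilde{J}(\tilde W, W, \Phi, \frac12)$ depends on $(W,\Phi)$ only through $\whitform(M(\frac12)W,\Phi,\cdot,-\frac12) \in \des(\pi)$, which is exactly the factorization required.

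The hard part is the first step. Identifying the image of the intertwining operator on the full parabolically induced representation $\Ind(\WhitML(\pi))$ and carrying out the subsequent twisted Jacquet module computation, while correctly tracking the Heisenberg--Weil action, is where the real work sits. Both ingredients are handled in \cite{MR1954940}, but the relative cuspidality of Lemma \ref{lem: relsc} is essential for ensuring that the relevant Jacquet modules vanish on all proper parabolics, thereby forcing irreducibility and supercuspidality of the descent.
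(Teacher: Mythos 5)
The key gap is in your treatment of condition (3), the factorization. You assert that, by the equivariance \eqref{eq: invariance of J}, the functional $(W,\Phi)\mapsto\tilde{J}(\tilde W,W,\Phi,\tfrac12)$ ``descends to a linear functional on the abstract descent $J_V(\pi'\otimes\wevinv)$,'' where $\pi'$ is the image of $M(\tfrac12)$. But equivariance only tells you the functional intertwines the $\tilde G$-actions; it does not say anything about $(W,\Phi)$ appearing only through $\whitform(M(\tfrac12)W,\Phi,\cdot,-\tfrac12)$. By its very definition \eqref{eq: localinner}, $\tilde{J}(\tilde W,W,\Phi,\tfrac12)$ is built from $\whitform(W,\Phi,\cdot,\tfrac12)$, i.e.~from $W_s$ evaluated at $s=\tfrac12$; a priori it is a functional on $I(\pi,\tfrac12)\otimes\wevinv$, not on the image $\pi'$ of the intertwining operator. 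To conclude that it factors through the explicit descent is precisely the content of condition (3), and it is not automatic. The paper's argument is quite different: it uses the local functional equation \eqref{eq: local functional equation} and an order-of-vanishing count. Since $\gamma(\tilde\pi\otimes\pi,s+\tfrac12,\psi)$ has a pole of order $k$ at $s=\tfrac12$ (one for each $\tau_i$), and the denominator $\gamma(\pi,s,\psi)\gamma(\pi,\wedge^2,2s,\psi)$ has a zero of the same order, the proportionality factor in \eqref{eq: local functional equation} is nonzero and finite at $s=\tfrac12$, giving $\tilde{J}(\tilde W,W,\Phi,\tfrac12)=c\,\tilde{J}(\tilde W,M(\tfrac12)W,\Phi,-\tfrac12)$ with $c\ne0$. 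The right-hand side visibly depends on $(W,\Phi)$ only through $\whitform(M(\tfrac12)W,\Phi,\cdot,-\tfrac12)$, which is the desired factorization. This $\gamma$-factor bookkeeping is the real content of the proposition, and your proposal is missing it entirely.

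A smaller issue: in step 1 you write that ``the image of $M(\tau_i,\tfrac12)$ is an irreducible square-integrable representation $\delta_i$ on the corresponding smaller symplectic group.'' The operator $M(\tau_i,\tfrac12)$ acts on the degenerate principal series of $\Sp_{4n_i}$, so its image is a representation of $\Sp_{4n_i}$, not of a smaller group; and Proposition~\ref{prop: metasqr} asserts reducibility of $I(\tau_i,\tfrac12)$, not square-integrability of the image. It appears you may be conflating the Langlands quotient of $I(\tau_i,\tfrac12)$ with the local descent $\des(\tau_i)$ on $\Mp_{n_i}$. The paper sidesteps this by directly citing the results of \cite{MR1671452, MR1954940} for the statement that $J_V(\pi'\otimes\wevinv)$ is irreducible, supercuspidal, and generic on $\tilde G$, which is the cleaner route.
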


\begin{proof}
By Lemma \ref{lem: simpleind} $\pi\in\Irr_{\meta}\GLnn$. Moreover, let $\pi'$ be the Langlands quotient of $I(\pi,\frac12)$.
By \cite{MR1671452, MR1954940} $J_V(\pi'\otimes\wevinv)\in\Irr_{\cusp,\gen,\psi_{\tilde N}}\tilde G$
and in particular, $\des(\pi)\in\Irr_{\cusp,\gen,\psi_{\tilde N}}\tilde G$
(and hence $\tilde J(\tilde W,W,\Phi,\cdot)$ is entire).
Similarly for $\tilde\pi=\desinv(\pi)$.
Finally, if $\pi$ is of the form \eqref{eq: pi} then $\gamma(\tilde\pi\otimes\tau_i,s,\psi)$ has a simple pole at $s=0$ for all $i$
and $\gamma(\tilde\pi\otimes\pi,s,\psi)$ has a pole of order $k$ at $s=0$.
It follows from the local functional equations \eqref{eq: local functional equation} that
there exists a non-zero scalar $c$ such that
\[
\tilde{J}(\tilde W,W,\Phi,\frac12)=c\tilde{J}(\tilde W,M(\frac12)W,\Phi,-\frac12)
\]
for any $W\in\Ind(\WhitML(\pi))$, $\Phi\in\swrz(F^n)$ and $\tilde W\in\WhitGd(\tilde\pi)$.
Indeed, both numerator and denominator of the proportionality constant on the right-hand side of
\eqref{eq: local functional equation} have zero of order $k$ at $s=\frac12$.
Thus, $\pi$ is good.
\end{proof}

\begin{proposition} \label{prop: unram}
Suppose that $F$ is $p$-adic with $p\ne2$, $\pi\in\Irr_{\gen}\GLnn$ is unramified, unitarizable and self-dual and $\psi$ is unramified.
Then $\pi\in\Irr_{\gen,\meta}\GLnn$.
Let $\tilde\pi=\desinv(\pi)$. Then $\tilde\pi\in\Irr_{\gen,\psi_{\tilde N}^{-1}}\tilde G$,
$\tilde\pi$ is unramified and the $\psi^{-1}$-lift of $\tilde\pi$ is $\pi$.
Moreover, if $W$ is $K$-invariant and $\tilde W$ is $K'$-invariant then
\[
\stint_{N'}\tilde{J}(\tilde\pi(\animg n)\tilde W,W,1_{\OO^n},\frac12)\psi_{\tilde N}( n)\,dn
=\tilde W(e)\whitform(M(\frac12)W,1_{\OO^n},e,-\frac12).
\]
\end{proposition}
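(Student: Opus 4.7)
The proof consists of putting together the unramified input of the Ginzburg--Rallis--Soudry descent construction, and I would verify the listed assertions in turn.

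\emph{Metaplectic type.} I would write $\pi$ as $\sigma\times\sigma^{\vee}$ for an irreducible unramified representation $\sigma$ of $\GLn$ (obtained by pairing the Satake parameters of $\pi$ into reciprocal pairs, which is possible precisely because $\pi$ is self-dual and unramified). The second part of Lemma \ref{lem: simpleind} then yields $\pi\in\Irr_{\gen,\meta,\eps}\GLnn$, with $\epsilon_\pi=\omega_\sigma(-1)$.

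\emph{Identification of $\tilde\pi=\desinv(\pi)$.} I would set $W^{\circ}_{\tilde\pi}:=\whitformd(M(\tfrac12)W'^{\circ},\Phi_0,\cdot,-\tfrac12)$, where $W'^{\circ}\in\Ind(\WhitMLd(\pi))$ is the normalized unramified section. Its $K'$-invariance is immediate from \eqref{eq: whitform equivariance}, while its value at $e$ can be extracted using part \ref{part: unramwhitform} of Lemma \ref{L: whitform} in combination with the Gindikin--Karpelevich formula \eqref{eq: unramwhit}. The non-vanishing of the descent (recalled in the discussion preceding \S\ref{sec: technical} and taken from \cite{MR1671452}) ensures $W^{\circ}_{\tilde\pi}\neq0$, so $\desinv(\pi)$ contains a non-zero $K'$-fixed Whittaker vector. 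Because a $\psi_{\tilde N}^{-1}$-generic unramified representation is determined up to isomorphism by its Satake parameters and has an irreducible Whittaker model, $\tilde\pi$ is irreducible and unramified. To identify its $\psi^{-1}$-lift as $\pi$, I would run the unramified Shimura-type computation \eqref{eq: Junram} against an arbitrary auxiliary unramified generic $\pi'$ on $\GL_m$ and invoke the local functional equation \eqref{eq: local functional equation}: the resulting identity forces $L_{\psi^{-1}}(\tilde\pi\otimes\pi',s)=L(s,\pi\otimes\pi')$ for every such $\pi'$, which pins down the Satake parameters of the $\psi^{-1}$-lift to be those of $\pi$.

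\emph{The integral identity.} By \eqref{eq: main} applied to the good representation $\pi$, there is a single constant $c_\pi$ that must equal $1$ in our unramified setting. Specializing to the triple $(\tilde W^{\circ},W^{\circ},\Phi_0)$: the right-hand side of the displayed formula in the proposition equals $\tilde W^{\circ}(e)\cdot W^{\circ}_{\tilde\pi}(e)=W^{\circ}_{\tilde\pi}(e)$. For the left-hand side, the matrix-coefficient-like function $\animg n\mapsto\tilde J(\tilde\pi(\animg n)\tilde W^{\circ},W^{\circ},\Phi_0,\tfrac12)$ is bi-$K'$-invariant, so the $p$-adic stable integral against $\psi_{\tilde N}$ produces the $\psi_{\tilde N}^{-1}$-Whittaker functional of $\tilde\pi$ applied to this matrix coefficient. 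I would compute this Whittaker functional by combining the Casselman--Shalika formula for the unramified Whittaker function on $\tilde G$ with \eqref{eq: Junram} and \eqref{eq: unramwhit}, carefully tracking the Tamagawa-measure factor $\Delta_{\Sp_n}(1)^{-1}$ appearing in \eqref{eq: Junram}, and verify that the ratio of $L$-factors produced on both sides matches exactly.

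\emph{Main obstacle.} The subtle step is the last one: reconciling the two explicit unramified formulas---the Casselman--Shalika computation of the unramified Whittaker function of $\tilde\pi$ on the one hand, and the formula \eqref{eq: Junram} for $\tilde J$ at unramified data on the other---so that all $L$-factors and the factor $\Delta_{\Sp_n}(1)^{-1}$ cancel to leave constant $1$. This bookkeeping is where the normalization of the $\psi\leftrightarrow\psi^{-1}$ lift must be aligned with the Tamagawa-measure conventions adopted in \S\ref{sec: chevalley}.
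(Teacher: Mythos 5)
Your derivation of $\pi\in\Irr_{\gen,\meta}\GLnn$ via Lemma \ref{lem: simpleind} is essentially what the paper does (it pairs the Satake parameters to write $\pi$ as a product of $\GL_2$ principal series $\chi_i\times\chi_i^{-1}$; your $\sigma\times\sigma^\vee$ packaging carries the same content), and your plan for the closing display correctly assembles the right ingredients: \eqref{eq: Junram}, \eqref{eq: unramwhit}, Lemma \ref{L: whitform} part \ref{part: unramwhitform}, and an unramified evaluation of the stable integral of a $K'$-bi-invariant matrix coefficient.

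The genuine gap is in the middle step. You infer that $\tilde\pi=\desinv(\pi)$ is irreducible and unramified from the statement that "a $\psi_{\tilde N}^{-1}$-generic unramified representation is determined up to isomorphism by its Satake parameters and has an irreducible Whittaker model," but this begs the question: a priori $\tilde\pi$ is just a $\tilde G$-stable space of $(\tilde N,\psi_{\tilde N}^{-1})$-equivariant functions that happens to contain a nonzero $K'$-fixed vector, and nothing you write rules out that it is reducible or fails to be the Whittaker model of a single unramified representation. The paper closes this gap by citing \cite[Theorem 6.4, part 1]{MR2848523}, which identifies $\tilde\pi$ as a subquotient of an explicit unramified principal series $\Ind_{\tilde B}^{\tilde G}\tilde\chi$ of $\tilde G$, and then establishing irreducibility of that principal series by the Tadi\'c--Jantzen argument, which in turn uses the irreducibility of $\pi$ through the constraint $\chi_i\chi_j^{-1}\ne\abs{\cdot}^{\pm1}$ on the inducing data. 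With $\tilde\pi$ so identified, the $\psi^{-1}$-lift is read off directly from $\tilde\chi$; there is no need for (and it is not legitimate, before irreducibility is settled, to run) your auxiliary $\gamma$-factor comparison. The same issue infects your phrase "by \eqref{eq: main} applied to the good representation $\pi$": goodness requires irreducibility of $\des(\pi)$, which is precisely the step missing from your outline.
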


\begin{proof}
Since $\pi$ is generic, self-dual and unramified, there exist unramified characters $\chi_1,\dots,\chi_n$ of $F^*$
such that $\pi=\Ind_{B_\GLnn}^{\GLnn}\chi$ (principal series on $\GLnn$) where $\chi$ is the character
\[
\chi(\diag(t_1,\dots,t_{2n}))=\prod_{i=1}^n\chi_i(t_{2i-1}t_{2i}^{-1}).
\]
By Lemma \ref{lem: simpleind} it follows that $\pi\in\Irr_{\gen,\meta}\GLnn$.
Also, since $\pi$ is irreducible, we have
\begin{equation} \label{eq: nopair}
\chi_i\chi_j^{-1}\ne\abs{\cdot}^{\pm1}\text{ for all }i,j.
\end{equation}
We note that the image of $M(\frac12)$ is irreducible\footnote{This is in fact true for any unitarizable self-dual generic representation $\pi$,
cf.~\cite[p. 904]{MR1983784}. This probably holds without the unitarizability assumption, in which case we can drop this
assumption in the proposition} and it is unramified by \eqref{eq: unramwhit}.
By \cite[Theorem 6.4, part 1]{MR2848523} $\tilde\pi$ is a subquotient of $\Ind_{\tilde B}^{\tilde G}\tilde\chi$
where $\tilde\chi$ is the character of $\tilde T$ given by
\[
\tilde\chi(\levi'(\diag(t_1,\dots,t_n)),1)=\chi_1(t_1)\dots\chi_n(t_n)\gamma_{\psi}(t_1\dots t_n).
\]
We can apply the argument of \cite[\S7]{MR1266251} (cf.~also \cite[Theorem 3.3]{MR1420710})
to show that $\Ind_{\tilde B}^{\tilde G}\tilde\chi$ is irreducible and therefore equals $\tilde\pi$.
(The Langlands classification for $\Mp_n$ was proved in \cite{MR1882038}.)
It is also clear that the $\psi^{-1}$-lift of $\tilde\pi$ is $\pi$.

The last part follows from Lemma \ref{L: whitform} part \ref{part: unramwhitform} and the relations \cite[(2.9)]{LMao5},
\eqref{eq: Junram} and \eqref{eq: unramwhit}.
\end{proof}

\section{Reduction to a local statement} \label{sec: local to global}

In this section we will reduce Conjecture \ref{conj: metplectic global} to a local conjectural identity.

We go back to the global setting.
Let $\K=\prod_vK_v$ be the standard maximal compact subgroup of $G(\A)$.
We will use results from \cite{MR2848523}, where the choice of the character  $\psi_{N_\GLnn}$ differs from ours.
In the following (\S\ref{sec: descent}, \ref{sec: grswhit}), let $\psi_{N_\GLnn}$ be an arbitrary non-degenerate character of $\GLnn(F)\bs \GLnn(\A)$.
The other characters are determined by $\psi_{N_\GLnn}$ as described in \S\ref{sec: characters}.

\subsection{The descent} \label{sec: descent}
Consider the set $\Mcusp\GLnn$ of automorphic representations $\pi$ of $\GLnn(\A)$ which are realized on Eisenstein series
induced from $\pi_1\otimes\dots\otimes\pi_k$ where $\pi_i\in\Cusp_{\meta}\GL_{2n_i}$, $i=1,\dots,k$ are distinct and $n=n_1+\dots+n_k$.
The representation $\pi$ is irreducible: it is equivalent to the parabolic induction $\pi_1\times\dots\times\pi_k$.
Moreover, $\pi$ determines $\pi_1,\dots,\pi_k$ uniquely up to permutation \cite{MR618323, MR623137}.

We view $\pi$ as a representation of $\Levi(\A)$ via $\levi$.
Note that the space of $\pi$ is invariant under conjugation by $w_U$.
Let $\AF(\pi)$ be the space of functions $\varphi:\Levi(F)U(\A)\bs G(\A)\rightarrow\C$
such that $m\mapsto\modulus_P(m)^{-\frac12}\varphi(mg)$, $m\in\Levi(\A)$ belongs to the space of $\pi$ for all $g\in G(\A)$.
As in the local case, let $\varphi_s(g)=\wgt{g}^s\varphi(g)$.
We define Eisenstein series
\[
\eisen(\varphi,s)=\sum_{\gamma\in P(F)\bs G(F)}\varphi_s(\gamma g), \ \ \varphi\in\AF(\pi)
\]
and intertwining operator $M(s):\AF(\pi)\rightarrow\AF(\pi)$ given by
\[
M(s)\varphi(g)=\wgt{g}^s\int_{U(\A)}\varphi_s(w_Uug)\ du.
\]
Both converge absolutely for $\Re s\gg0$ and extend meromorphically to $\C$.
If follows from \cite[Theorem 2.1]{MR2848523} that both $\eisen(\varphi,s)$ and $M(s)$ have a pole of order $k$ at $s=\frac12$.
Let
\[
\reseisen\varphi=\lim_{s\rightarrow\frac12}(s-\frac12)^k\eisen(\varphi,s)
\]
and
\[
\resM=\lim_{s\rightarrow\frac12}(s-\frac12)^kM(s)
\]
be the leading terms in the Laurent expansion at $s=\frac12$.
The constant term
\[
\reseisen^U\varphi(g)=\int_{U(F)\bs U(\A)}\reseisen\varphi(ug)\ du
\]
of $\reseisen\varphi$ along $U$ is given by
\[
\reseisen^U\varphi(g)=(\resM\varphi(g))_{-\frac12}.
\]

Recall the Fourier-Jacobi coefficient defined in \eqref{def: FJ}.
By definition, the \emph{descent} of $\pi$ (with respect to $\psi_{N_\GLnn}$) is the space
$\tilde\pi=\desc_{\psi_{N_\GLnn}}(\pi)$ generated by $\FJ(\reseisen\varphi,\Phi)$, $\varphi\in\AF(\pi)$, $\Phi\in\swrz(\A^n)$.
The following result summarizes the work of Ginzburg--Rallis--Soudry, Ginzburg--Jiang--Soudry, and
Cogdell--Kim--Piatetski-Shapiro--Shahidi.

As in \cite{LMao5} let $\Cusp_{\psi_{\tilde N}}\tilde G$ be the set of irreducible constituents of the
space of cusp forms on $\tilde G(\A)$ which are orthogonal to all automorphic forms with vanishing $\psi_{\tilde N}$-Whittaker functions.
\begin{theorem}
\begin{enumerate}
\item $\tilde\pi\in\Cusp_{\psi_{\tilde N}}\tilde G$.
\item The $\psiweil$-lift of $\tilde\pi$ is $\pi$.
\item The descent map defines a bijection between $\Mcusp\GLnn$ and
the set of $\sigma$'s in $\Cusp_{\psi_{\tilde N}}\tilde G$ with trivial $\psiweil$-theta lift to $SO(2n-1)$.
\end{enumerate}
\end{theorem}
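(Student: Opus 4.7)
The plan is to assemble the statement from three existing bodies of work, as the authors indicate. For each of the three parts I would proceed as follows.

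For part (1), I would compute the constant terms of $\FJ(\reseisen\varphi,\Phi)$ along every standard parabolic subgroup $\tilde Q$ of $\tilde G$. Using the Bruhat decomposition $P(F)\bs G(F)/Q(F)$ and the fact that the only non-vanishing constant term of $\reseisen\varphi$ is the one along $U$, one shows that the surviving orbits give contributions which either vanish by cuspidality of the inducing data $\pi_1,\dots,\pi_k$ or are killed by the integration against the theta function $\Theta^\Phi$. This forces cuspidality of $\tilde\pi$. Non-vanishing of $\tilde\whit^{\psi_{\tilde N}}$ on $\tilde\pi$ then reduces by unfolding to an Eulerian integral whose local factors are of the form $\whitform$ studied in \S\ref{sec: local conjecture}; the local non-vanishing of these integrals for suitable data yields the $\psi_{\tilde N}$-genericity. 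Irreducibility of $\tilde\pi$ follows a posteriori from the determination of its local components via part (2).

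For part (2), with $\tilde\pi$ already cuspidal and $\psi_{\tilde N}$-generic, I would invoke Rallis' tower property to locate the first occurrence of the $\psiweil$-theta lift of $\tilde\pi$. One shows the first occurrence is at $\SO(2n+1)$ and that the resulting cuspidal representation of $\SO(2n+1)$ has CKPS functorial transfer to $\GLnn$ equal to $\pi$. The identification is forced at almost all unramified places by matching Satake parameters, and globally by Jacquet--Shalika strong multiplicity one for isobaric sums.

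For part (3), injectivity follows from (2) combined with the uniqueness of the data $\pi_1,\dots,\pi_k$ in the isobaric sum. For surjectivity, given $\sigma\in\Cusp_{\psi_{\tilde N}}\tilde G$ with vanishing $\psiweil$-theta lift to $\SO(2n-1)$, the tower property shows the first occurrence is at $\SO(2n+1)$ with a cuspidal lift; applying CKPS transfer to $\GLnn$ yields an isobaric $\pi_1\boxplus\dots\boxplus\pi_k$. The defining conditions of $\Mcusp\GLnn$ -- the simple pole of $L^S(s,\pi_i,\wedge^2)$ at $s=1$ and the non-vanishing of $L^S(\frac12,\pi_i)$ -- follow from the compatibility of the theta correspondence with the poles of the exterior square $L$-function (through the Shalika period characterization of the image of transfer from $\SO(2n+1)$) and from the non-vanishing of the $\psiweil$-theta lift. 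One then verifies $\sigma=\desc_{\psi_{N_\GLnn}}(\pi)$ by matching unramified parameters. The main obstacle is precisely this surjectivity step: it requires combining pole analysis of partial $L$-functions with the fine structure of the theta tower and CKPS functoriality, and then checking that the descent construction inverts the composite map exactly on the prescribed image.
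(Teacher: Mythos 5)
Your outline tracks the strategy of the works the paper actually invokes: the paper's ``proof'' is a citation to \cite[Theorem 3.1]{MR2848523} for parts (1)--(2) except irreducibility, to \cite[Theorem 2.3]{MR3009748} for irreducibility, and to \cite[Theorem 11.2]{MR2848523} together with the Jiang--Soudry theta results \cite{MR1983781, MR2330445} and the CKPSS transfer \cite{MR2075885} for part (3). Your constant-term/unfolding argument for cuspidality and genericity, and your tower-property-plus-transfer argument for parts (2) and (3), are indeed the mechanisms in those references. So in broad strokes you have reconstructed the underlying approach rather than found a different one.

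The one place where you actually have a gap is the treatment of irreducibility. You write that ``irreducibility of $\tilde\pi$ follows a posteriori from the determination of its local components via part (2).'' This does not follow. Knowing the local components of $\tilde\pi$ at (almost) all places does not give irreducibility: strong multiplicity one is not available for $\Mp_n(\A)$, so the space generated by the Fourier--Jacobi coefficients could a priori decompose into several irreducible constituents sharing the same functorial lift to $\GL_{2n}$, or even contain a single irreducible with multiplicity. Deciding that the descent module is irreducible is a genuinely difficult point; it was open for a number of years and is the content of the separate theorem of Ginzburg--Jiang--Soudry \cite[Theorem 2.3]{MR3009748}, which the paper cites precisely for this purpose. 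You would need to either quote that result directly, or supply its argument (which is not a corollary of parts (1) and (2)). As written, your proposal proves a weaker statement: that the descent is cuspidal, $\psi_{\tilde N}$-generic, and that every irreducible summand has $\psiweil$-lift $\pi$.

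A minor remark: for part (3) you correctly flag surjectivity as the hard step, but note that injectivity also uses the uniqueness of the data $\pi_1,\dots,\pi_k$ in the isobaric decomposition, i.e.~the Jacquet--Shalika results \cite{MR618323, MR623137} which the paper invokes a few lines earlier. Worth making explicit.
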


The first two parts, except for the irreducibility, are contained in \cite[Theorem 3.1]{MR2848523}.
The irreducibility part is \cite[Theorem 2.3]{MR3009748}.
Finally, the last part is \cite[Theorem 11.2]{MR2848523} (which uses results on the theta correspondence
by Jiang-Soudry \cite{MR1983781, MR2330445}) together with the results
of Cogdell--Kim--Piatetski-Shapiro--Shahidi \cite{MR2075885} in the case of the odd orthogonal groups.

For the rest of the section (up to \S\ref{sec: proof}) we will show the following:
\begin{theorem} \label{thm: local to global}
Let $\pi\in\Mcusp\GLnn$ and $k$ as above. Then for all $v$ $\pi_v$ is \good.
Moreover, let $S$ be a finite set of places including all the archimedean and even places
such that $\pi$ and $\psi$ (and hence also $\tilde\pi$) are unramified outside $S$.
Then for any $\tilde\varphi\in\tilde\pi$ and $\tilde\varphi^\vee\in\tilde\pi^\vee$ which are fixed under $K'_v$ for all $v\notin S$ we have
\label{eq: mainidtmodinfty}
\begin{multline} \label{eq: modglobalidentity}
\tilde\whit^{\psi_{\tilde N}}(\tilde\varphi)\tilde\whit^{\psi_{\tilde N}^{-1}}(\tilde\varphi^\vee)=
2^{-k}\Delta_{\Sp_n}^S(1)(\prod_{v\in S}c_{\pi_v}^{-1})\frac{L^S(\frac12,\pi)}{L^S(1,\pi,\sym^2)}\\
\stint_{N'(F_S)}(\tilde\pi(\animg{n})\tilde\varphi,\tilde\varphi^\vee)_{\Sp_n(F)\bs\Sp_n(\A)}\psi_{\tilde N}(n)^{-1}\ dn.
\end{multline}
\end{theorem}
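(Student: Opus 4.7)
\medskip

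The plan is to prove both assertions of the theorem simultaneously by unfolding the two sides of \eqref{eq: modglobalidentity} via the descent construction and matching the resulting expressions against the local identity \eqref{eq: main1}. Throughout, we fix $\pi\in\Mcusp\GLnn$ with $\tilde\pi=\desc_{\psi_{N_\GLnn}}(\pi)$ and we realize $\tilde\varphi=\FJ(\reseisen\varphi,\Phi)$ for some $\varphi\in\AF(\pi)$ and $\Phi\in\swrz(\A^n)$, and similarly $\tilde\varphi^\vee=\FJ(\reseisen\varphi^\vee,\Phi^\vee)$.

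First I would establish that every local component $\pi_v$ is \good. The irreducibility of $\des(\pi_v)$ follows from the global irreducibility of $\tilde\pi$ (Ginzburg--Jiang--Soudry) together with the compatibility between the local and global descent constructions: the global descent realizes $\desinv(\pi_v)$ as the local tensor factor. For $v\notin S$ unramified this is explicit from Proposition \ref{prop: unram}. The holomorphy of $\tilde J(\tilde W,W,\Phi,s)$ at $s=\frac12$ is Lemma \ref{lem: Jholom} since the local components $\pi_v$ are unitarizable (being components of an isobaric sum of unitary cuspidal representations). The factorization property \eqref{eq: factorsthru} follows by comparing the order of the poles on both sides of \eqref{eq: local functional equation} at $s=\frac12$: the ratio $\gamma(\tilde\pi_v\otimes\pi_v,s+\frac12)/[\gamma(\pi_v,s)\gamma(\pi_v,\wedge^2,2s)]$ has a zero matching the order of the pole of $M(\pi_v,s)$, which yields the required factorization by the same argument as in the $\Irr_{\mcusp}$ case.

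Next I would unfold the left-hand side of \eqref{eq: modglobalidentity}. Writing $\tilde\whit^{\psi_{\tilde N}}(\FJ(\reseisen\varphi,\Phi))$ as an iterated integral over $N'(F)\bs N'(\A)$ and $V(F)\bs V(\A)$, one collapses the root sum in the Eisenstein series against the descent unipotent and uses $\reseisen^U\varphi(g)=(\resM\varphi(g))_{-\frac12}$ to reduce the computation to the Whittaker integral
\[
\tilde\whit^{\psi_{\tilde N}}(\tilde\varphi)=\whitform(\resM\varphi,\Phi,e,-\tfrac12),
\]
and similarly for $\tilde\varphi^\vee$. This is essentially the GRS computation and produces the factor $\resM=\lim_{s\to1/2}(s-\frac12)^k M(s)$, which at each ramified local place factorizes as $(s-\frac12)^k$ times products of local $M(\pi_v,s)$ (times an unramified Euler product computed via \eqref{eq: unramwhit}).

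For the right-hand side I would unfold the inner product $(\tilde\pi(\animg n)\tilde\varphi,\tilde\varphi^\vee)_{\Sp_n(F)\bs\Sp_n(\A)}$ using the fact that $\tilde\pi\in\Cusp_{\psi_{\tilde N}}\tilde G$ and hence admits a unique Whittaker model. This gives a global Shimura-type integral of the form $\int_{N'(\A)\bs G'(\A)} W_{\tilde\varphi}(\animg{n}\tilde g)\overline{W_{\tilde\varphi^\vee}(\tilde g)}\,dg$, and then one performs an Euler factorization: at each place this produces the local Shimura integral $\tilde J(\tilde W_v,W_v,\Phi_v,\frac12)$ of \eqref{eq: localinner}, while the unramified places contribute the expected Euler product $L^S(\frac12,\pi)/[L^S(1,\pi)L^S(1,\pi,\wedge^2)]$ by \eqref{eq: Junram}. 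Using $L(1,\pi)L(1,\pi,\wedge^2)=\Delta_{\Sp_n}^S(1)^{-1}L(1,\pi,\sym^2)\cdot(\text{unramified zeta})$ and carefully bookkeeping the pole of $\resM$ of order $k$ (which contributes the factor $2^{-k}$ from the leading coefficient on both sides), the global identity reduces at the places $v\in S$ to the local identity \eqref{eq: main1} with constants $c_{\pi_v}$.

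The hard part will be the two bookkeeping steps: establishing that the factorization property \eqref{eq: factorsthru} holds at archimedean places (where one cannot invoke \cite{MR1954940} directly and must instead combine Lemma \ref{lem: Aasymparch}, Lemma \ref{lem: archsurj}, and meromorphic continuation of the local $\tilde J$), and tracking the precise power-of-two contribution. The latter arises because the residue $\resM$ is a $k$-fold iterated residue, each of whose factors contributes a local constant whose total product must combine with the measure normalizations to produce exactly $2^{-k}$; this is essentially where the power of two in Conjecture \ref{conj: metplectic global} comes from, as advertised in the introduction. Once both sides are factorized into Euler products and local integrals, identity \eqref{eq: main1} applied at each $v\in S$ (with the $c_{\pi_v}^{-1}$) closes the argument.
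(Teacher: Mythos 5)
Your overall architecture matches the paper's: express the Whittaker coefficient of the descent via the Ginzburg--Rallis--Soudry formula and Euler-factorize into local $\whitform$'s, express the inner product via the Rankin--Selberg integral and Euler-factorize into local $\tilde J$'s, and then close the loop with the local identity \eqref{eq: main1} at each place in $S$. The paper's proof is indeed the combination of \eqref{eq: Fourierofdescent} (from the reformulation Theorem \ref{thm: FCdescent} of the GRS Whittaker computation) with \eqref{eq: innerprod} (the residue at $s=\frac12$ of Theorem \ref{thm: RS}), plus the fact from \cite[\S2]{LMao5} that both sides are a priori proportional.

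However, there is a genuine gap in your treatment of the goodness assertion, specifically the factorization property \eqref{eq: factorsthru}. You propose to deduce it from the local functional equation \eqref{eq: local functional equation} ``by the same argument as in the $\Irr_{\mcusp}$ case.'' That argument relies crucially on each $\tau_i$ being supercuspidal, which guarantees that $\tilde J$ is entire and that $\gamma(\tilde\pi\otimes\tau_i,s,\psi)$ has exactly a simple pole at $s=0$, so that numerator and denominator both have order-$k$ zeros at $s=\frac12$. For a general $\pi_v$ arising as a local component of some $\pi\in\Mcusp\GLnn$, the constituents $(\pi_i)_v$ are merely generic, not supercuspidal, $\tilde J_v$ is only meromorphic, and the exact orders of the zeros and poles of the gamma factors at $s=\frac12$ are no longer under control by pure local arguments; you acknowledge this yourself for archimedean places but do not resolve it. The paper bypasses this entirely with a short global argument: if $\whitform(M_{v_0}(\frac12)W_{v_0},\Phi_{v_0},\cdot,-\frac12)\equiv 0$, then by \eqref{eq: Fourierofdescent} the global Whittaker coefficient of $\FJ(\reseisen\varphi,\Phi)$ vanishes identically, hence (by irreducibility and genericity of the global descent) $\FJ(\reseisen\varphi,\Phi)\equiv0$, and then \eqref{eq: innerprod} forces $\tilde J_{v_0}(\tilde W_{v_0},W_{v_0},\Phi_{v_0},\frac12)=0$. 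This one-line deduction works uniformly at all places (archimedean included) once the GRS formulas are in hand, and replacing it with a local pole-order analysis is exactly the kind of thing that would break down.

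A smaller point: your account of the origin of the power $2^{-k}$ is not quite right. It is not a product of ``local constants'' from a ``$k$-fold iterated residue'' nor a measure normalization. It is simply the chain-rule factor in
\[
\lim_{s\to\frac12}\bigl(s-\tfrac12\bigr)^k L^S(2s,\pi,\wedge^2)=2^{-k}\lim_{u\to1}(u-1)^k L^S(u,\pi,\wedge^2),
\]
which appears in the computation of $\resm^S(\pi)$ because $L^S(2s,\pi,\wedge^2)$ (rather than $L^S(s,\pi,\wedge^2)$) occurs in the unramified Whittaker formula for the intertwining operator.
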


Note that $\vol(N'(\OO_S)\bs N'(F_S))=1$ by our choice of measures.

\subsection{}\label{sec: grswhit}
Recall that \cite[Theorem 9.7, part (1)]{MR2848523} states a formula
for the Whittaker--Fourier coefficient of $\FJ(\reseisen\varphi,\Phi)$.
We will rewrite it in a different way as follows.

\begin{theorem}(reformulation of \cite[Theorem 9.7, part (1)]{MR2848523}) \label{thm: FCdescent}
We have
\[
\tilde\whit^{\psi_{\tilde N}}(\FJ(\reseisen\varphi,\Phi),\animg g)=
\int_{V_\gamma(\A)\bs V(\A)}\whit^{\psi_N}(\reseisen\varphi,\gamma v \toG(g))
\wevinv(v\animg g)\Phi(\xi_n)\,dv, \ \ g\in G'(\A)
\]
where $\xi_n=(0,\dots,0,1)\in F^n$ and the integral is absolutely convergent.
\end{theorem}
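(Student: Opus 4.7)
The plan is to derive the formula by unfolding the definitions, applying a theta-series expansion, and making an explicit change of variables that involves the Weyl element $\gamma$. First I would combine the defining expressions for $\tilde\whit^{\psi_{\tilde N}}$ and for $\FJ(\reseisen\varphi,\Phi)$ via \eqref{def: FJ}, producing the double integral
\[
\int_{N'(F)\bs N'(\A)}\int_{V(F)\bs V(\A)}\reseisen\varphi(v\toG(ng))\,\Theta^\Phi_{\psi_{N_\GLnn}^{-1}}(v\animg n\animg g)\,\psi_{\tilde N}(n)^{-1}\,dv\,dn.
\]
Because $\toG(N')$ normalizes $V$ and $N=V\rtimes\toG(N')$, these two integrations merge into a single integration over $N(F)\bs N(\A)$ (after an appropriate folding of the outer $N'$-variable into the $V$-direction).

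Next I would expand the theta function as $\sum_{\xi\in F^n}\wevinv(v\animg n\animg g)\Phi(\xi)$ and swap the sum with the unipotent integration. Using the $G(F)$-invariance of $\reseisen\varphi$, one can absorb the sum over $\xi$ by translating the integration variable; the standard GRS/Soudry-type argument shows that only the generic orbit contributes nontrivially and that it may be normalized to the base point $\xi_n$. What emerges is precisely a $\psi_N$-Whittaker integration of $\reseisen\varphi$ along an appropriate copy of $N$, where the degenerate character $\psi_N$ (trivial on $U$, equal to $\psi_{N_\Levi}$ on $N_\Levi$) arises because under conjugation by $\gamma$ the combined character $\psi_{\tilde N}\cdot\psi_V\cdot\psi_{N_\GLnn}^{-1}(\langle\,\cdot\,,\xi_n\rangle)$ matches $\psi_N(\gamma\,\cdot\,\gamma^{-1})$, as is readable from the identities in \S\ref{sec: characters}.

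At this point the residual $V$-integration collapses from $V(F)\bs V(\A)$ to $V_\gamma(\A)\bs V(\A)$: the subgroup $V_\gamma=V\cap\gamma^{-1}N\gamma$ is exactly what gets absorbed into the Whittaker integration along $N$, while the $F$-points of $V/V_\gamma$ are absorbed into the left $G(F)$-invariance of $\reseisen\varphi$. This yields the formula as stated. For the absolute convergence, the Whittaker function $\whit^{\psi_N}(\reseisen\varphi,\cdot)$ is of moderate growth and decays rapidly along the Siegel Levi direction, and an Iwasawa-type estimate on $\gamma v$ (controlling the size of the torus component by the last row of the top-right block of $v$, exactly as in the local analysis of Lemma~\ref{L: whitform}) combines with the Schwartz decay of $\Phi$ to dominate the integrand uniformly for $g$ in compacta.

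The main obstacle is the careful bookkeeping in the second step: identifying which $\xi$-orbits actually contribute after the unfolding, and checking that the resulting character on the combined $N$-integration is exactly $\psi_N$ rather than some other degenerate character. Once this identification is made, and once one verifies the character matching $\psi_{\tilde N}\cdot\psi_V\cdot\psi_{N_\GLnn}^{-1}(\langle\,\cdot\,,\xi_n\rangle)\cdot\psi_{N}(\gamma\,\cdot\,\gamma^{-1})^{-1}\equiv 1$ on $V_\gamma$, the reformulation of [MR2848523, Theorem 9.7(1)] in the form claimed follows from routine manipulations.
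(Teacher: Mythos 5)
The paper's proof is a \emph{reformulation}, not a re-derivation: it starts from the explicit formula already proved in \cite[Theorem 9.7, part (1)]{MR2848523} — a seven-fold integral involving the Weyl elements $\alpha$, $\kappa$, $\epsilon$, the spaces $X$, $Y$, and the degenerate character $\psiold_N$ — and then performs a concrete sequence of matrix factorizations and changes of variables (splitting $\epsilon=\epsilon_U\epsilon_T\epsilon_{\bar U}$, conjugating through $\alpha\epsilon_T\alpha^{-1}$ to turn $\psiold_N$ into $\psi_N$, and finally using $\alpha\epsilon_{\bar U}\kappa=\gamma\levi(\toU_\GLnn(-I_n))$ to produce $\gamma$ and to identify the leftover integration domain with $V_\gamma(\A)\bs V(\A)$).

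You instead attempt to prove the identity directly by unfolding the Whittaker integral against the Fourier--Jacobi coefficient, expanding the theta series, and arguing orbit-by-orbit. That route, if carried through, would amount to re-proving a version of the GRS theorem from scratch rather than reformulating its conclusion, and it is substantially more ambitious. The main gap is precisely in the steps your outline treats as routine: (i) you assert that ``only the generic orbit contributes and may be normalized to $\xi_n$,'' but this is the heart of the GRS argument, requiring a careful analysis of Fourier coefficients of $\reseisen\varphi$ along a filtration of unipotent subgroups and a root-exchange/collapse argument, none of which is sketched; (ii) the element $\gamma$ never actually \emph{appears} in your derivation — you merely assert at the end that ``the $F$-points of $V/V_\gamma$ are absorbed into $G(F)$-invariance,'' but the identification of the correct Weyl element and the verification that the combined character restricts trivially on $V_\gamma$ (so that the integrand is $V_\gamma$-invariant) must be derived, not stipulated; and (iii) you do not address the discrepancy between the character conventions used here and those in \cite{MR2848523}, which the paper handles via Remark~\ref{rem: diffgrs} before plugging into the GRS formula. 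In short, your strategy is plausible in outline but has the hard content of the proof outsourced to ``standard GRS/Soudry-type arguments,'' whereas the paper's proof deliberately sidesteps all of that by treating the GRS identity as a black box and only transforming its right-hand side.
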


\begin{remark}\label{rem: diffgrs}
Before proving the theorem, we first observe that if it holds for $\psi_{N_\GLnn}=\psi_{N_\GLnn}^1$
(and the compatible set of characters determined by $\psi_{N_\GLnn}$ as in \S\ref{sec: characters})
then it also holds for $\psi_{N_\GLnn}=\psi_{N_\GLnn}^2$ as long as $\psiweil^1=\psiweil^2$.
Indeed, in this case we can write $\psi_{N_\GLnn}^1(n)=\psi_{N_\GLnn}^2(\spclt_1 n\spclt_1^{-1})$ where $\spclt_1=\diag[\spclt_2,\spclt_3^*]$
is such that $\spclt_2,\spclt_3\in T'_{\GLn}$ where their last diagonal entry is $1$.
The straightforward relations
\begin{gather*}
\FJc_{\psi_{N_\GLnn}^1}(\varphi,\Phi)=\FJc_{\psi_{N_\GLnn}^2}(I(\frac12, \toLevi(\spclt_3))\varphi,\Phi),\\
\tilde\whit^{\psi_{\tilde N}^1}(\tilde\varphi,\tilde g)=\tilde\whit^{\psi_{\tilde N}^2}(\tilde\varphi,\widetilde {\levi'(\spclt_2)}\tilde g),\\
\whit^{\psi_{N_\Levi}^1}(\varphi, g)=\whit^{\psi_{N_\Levi}^2}(\varphi, \levi(\spclt_1)g)
\end{gather*}
imply out claim.
The same observation holds for Conjecture~\ref{conj: metplectic global}, Theorem~\ref{thm: local to global} as well as for Theorem~\ref{thm: RS} below.
\end{remark}

\begin{proof}
Note that the integrand on the right-hand side is indeed left $V_\gamma(\A)$-invariant exactly as in the proof of Lemma \ref{L: whitform}.
By the remark above we only need to prove the theorem for a specific choice of $\psi_{N_\GLnn}$.
We use the definition of \cite{MR2848523}:
\[\psi_{N_\GLnn}(u)=\psi(u_{1,2}+\ldots+u_{n-1,n}+2u_{n,n+1}-u_{n+1,n+2}-\ldots-u_{2n-1,2n}),\ \ \ u\in N_\GLnn.\]
Thus $\psiweil(x)=\psi(2x)$.
It is enough to prove the required identity for $g=e$.
The expression for $\tilde\whit^{\psi_{\tilde N}}(\FJ(\reseisen\varphi,\Phi),e)$
in \cite[Theorem 9.7, part (1)]{MR2848523} is\footnote{Note the following typo in [loc.~cit.]: the term $\omega_{\psi^{-1},\gamma^{-1}}(h)$
should not appear in the formula.}
\[
\int_{\A^n}\int_{Y(\A)}\big(\int_{X(\A)}\whit^{\psiold_{N}}(\reseisen\varphi,\toUbar(x)\alpha\epsilon\kappa ya^+)\Phi(a)\ dx\big)\ dy\ da
\]
where we use the following notation
\begin{itemize}
\item $\alpha$ is the Weyl element such that
$\alpha_{i,2i-1}=1$, $i=1,\dots,2n$, $\alpha_{2n+i,2i}=-1$, $i=1,\dots,n$,
$\alpha_{2n+i,2i}=1$, $i=n+1,\dots,2n$,
\item $\kappa=\levi(\kappa')$ where $\kappa'$ is the Weyl element of $\GLnn$ such that
$\kappa'_{2i,i}=\kappa'_{2i-1,n+i}=1$, $i=1,\dots,n$,
\item $\epsilon=\diag(A,\dots,A,A^*,\dots,A^*)$ where $A=\sm 11{-1}1$,
\item $X$ is the subspace of $\symspace_{2n}$ consisting of the strictly upper triangular matrices.
\item $Y$ is the subgroup of $N_\Levi$ consisting of the matrices of the form
$\levi(\toU_\GLnn(y))$ where $y$ is lower triangular with last row $0$.
\item $a^+=\levi(\toU_\GLnn(y))$ where the last row of $y$ is $a$ and all other rows
are zero.
\item $\psiold_N$ is a degenerate character on $N$ that is trivial on $U$ and for $u\in N_\GLnn$
\[
\psiold_N(\levi(u)):=\psi(u_{1,2}+\dots+u_{n,n+1}-u_{n+1,n+2}-\dots-u_{2n-1,2n}).
\]
\end{itemize}
Combining $y$ and $a^+$, by \eqref{eq: weilH1} and \eqref{eq: weilext} we can rewrite the above expression as
\[
\int_{Y'(\A)}\big(\int_{X(\A)}\whit^{\psiold_{N}}(\reseisen\varphi,\toUbar(x)\alpha\epsilon\kappa y)\wevinv(y)\Phi(0)\ dx\big)\ dy
\]
where now
\[
Y'=\{\levi(\toU_\GLnn(y)):y\text{ lower triangular}\}.
\]

Writing
\[
A=\sm 1101\sm 2001\sm 10{-1}1
\]
we get
$\epsilon=\epsilon_U\epsilon_T\epsilon_{\bar U}$
where
\[
\epsilon_U=\diag(\sm 1101,\dots,\sm 1101,\sm 1{-1}01,\dots,\sm 1{-1}01),
\]
\[
\epsilon_{\bar U}=\diag(\sm 10{-1}1,\dots,\sm 10{-1}1,\sm 1011,\dots,\sm 1011),
\]
and
\[
\epsilon_T=\diag(2,1,\dots,2,1,1,\frac12,\dots,1,\frac12).
\]

Note that
$\alpha\epsilon_U\alpha^{-1}=\sm{I_{2n}}{-I_{2n}}0{I_{2n}}.$
For any $x\in X(\A)$ we can write
\[
\sm{I_{2n}}0x{I_{2n}}\sm{I_{2n}}{-I_{2n}}0{I_{2n}}=\sm{(I_{2n}-x)^{-1}}{-I_{2n}}{}{I_{2n}-x}\sm{I_{2n}}0{x'}{I_{2n}}
\]
where $x'=(I_{2n}-x)^{-1}x=x+x^2+\dots+x^{2n-1}$. (Note that $(I_{2n}-x)^{-1}=(I_{2n}-x)^*$.)
After a change of variable $x'\mapsto x$ we get
\[
\int_{Y'(\A)}\big(\int_{X(\A)}\psi(x_{n,n+1})\whit^{\psiold_{N}}(\reseisen\varphi,
\toUbar(x)\alpha\epsilon_T\epsilon_{\bar U}\kappa y)\wevinv(y)\Phi(0)\ dx\big)\ dy
\]
since $\psiold_{N}(\levi(I_{2n}-x)^{-1})=\psi(x_{n,n+1})=\psi(x'_{n,n+1})$.

Also, $\alpha\epsilon_T\alpha^{-1}=\diag(2I_n,I_n,I_n,\frac12 I_n)$.
This element conjugates $\psiold_{N}$ to $\psi_{N}$ and stabilizes $\psi(x_{n,n+1})$,
so by conjugating we get
\[
\int_{Y'(\A)}\big(\int_{X(\A)}\psi(x_{n,n+1})\whit^{\psi_{N}}(\reseisen\varphi,
\toUbar(x)\alpha\epsilon_{\bar U}\kappa y)\wevinv(y)\Phi(0)\ dx\big)\ dy.
\]
Note that $\alpha\epsilon_{\bar U}\kappa=\gamma\levi(\toU_\GLnn(-I_n))$.
Changing variable  $y\mapsto \levi(\toU_\GLnn(I_n))y$, we get by \eqref{eq: weilH1} and \eqref{eq: weilext}
\[
\int_{Y'(\A)}\big(\int_{X(\A)}\psi(x_{n,n+1})\whit^{\psi_{N}}(\reseisen\varphi,\toUbar(x)\gamma y)\wevinv(y)\Phi(\xi_n)\ dx\big)\ dy.
\]
Note that
\[
\gamma^{-1}\toUbar(X)\gamma=\{\left(\begin{smallmatrix}I_n&x_1&&x_2\\&I_n&&\\&&I_n&-\startran{x_1}\\&&&I_n\end{smallmatrix}\right):
x_1\text{ strictly upper triangular}, x_2\in\symspace_n\},
\]
and thus $(\gamma^{-1}\toUbar(X)\gamma Y')\rtimes V_\gamma=V$.
In conclusion we obtain
\[
\int_{V_\gamma(\A)\bs V(\A)}\whit^{\psi_{N}}(\reseisen\varphi,\gamma x)\wevinv(x)\Phi(\xi_n)\ dx
\]
provided it converges, since by \eqref{eq: weilH3} and \eqref{eq: weilext} $\wevinv(\gamma^{-1}\toUbar(x)\gamma)$ acts by the scalar $\psi(x_{n,n+1})$ for $x\in X(\A)$.

Finally, the absolute convergence follows from Lemma \ref{L: whitform}.
\end{proof}

\subsection{}

We now go back to our choice of $\psi_{N_\GLnn}$ specified in \S\ref{sec: characters}.

We can rewrite Theorem \ref{thm: FCdescent} in terms of the local transforms
$\whitform$ defined in \S\ref{sec: local conjecture}.

Note that
\[
\whit^{\psi_N}(\reseisen\varphi)=\whit^{\psi_{N_\Levi}}(\reseisen^U\varphi)=
\whit^{\psi_{N_\Levi}}((\resM(\varphi))_{-\frac12}).
\]

Define for $\varphi\in\AF(\pi)$ a genuine function on $\tilde G(\A)$
\[
\whitform(\varphi,\Phi,\animg g,s):=\int_{V_\gamma(\A)\bs V(\A)}
\whit^{\psi_{N_\Levi}}(\varphi_s,\gamma v\toG(g))\wevinv(v\animg g)\Phi(\xi_n)\,dv,\,\,g\in G'(\A).
\]
Then we can write Theorem \ref{thm: FCdescent} in the form
\begin{equation} \label{eq: whitdescent}
\tilde\whit^{\psi_{\tilde N}}(\FJ(\reseisen\varphi,\Phi),\tilde g)=\whitform(\resM(\varphi),\Phi,\tilde g,-\frac12).
\end{equation}

\subsection{Factorization}

We can identify $\AF(\pi)$ with $\Ind_{P(\A)}^{G(\A)}\pi=\otimes_v\Ind_{P(F_v)}^{G(F_v)}\pi_v$.

Suppose that $\varphi\in\AF(\pi)$ is a factorizable vector and
$\whit^{\psi_{N_\Levi}}(\varphi,\cdot)=\prod_vW_v$ where $W_v\in\Ind(\WhitML(\pi_v))$.
Then as meromorphic functions in $s\in\C$ we have for $S$ large enough
\begin{equation} \label{eq: globtoloc1}
\whit^{\psi_{N_\Levi}}(M(s)\varphi)=m^S(\pi,s)(\prod_{v\in S}M_v(s)W_v)\prod_{v\notin S}W_v
\end{equation}
where
\[
m^S(\pi,s)=\frac{L^S(s,\pi)}{L^S(s+1,\pi)}\frac{L^S(2s,\pi,\wedge^2)}{L^S(2s+1,\pi,\wedge^2)}
\]
and $M_v(s):\Ind(\WhitML(\pi_v),s)\rightarrow\Ind(\WhitML(\d\pi_v),-s)$ are the local intertwining operators defined in \eqref{eq: defM}.
(Recall that $\pi=\d\pi$ in out case.)
Indeed, for $\Re s\gg0$ we have
\begin{multline*}
\whit^{\psi_{N_\Levi}}(M(s)\varphi,g)=\int_{N_\Levi(F)\bs N_\Levi(\A)}M(s)\varphi(ng)\psi_{N_\Levi}^{-1}(n)\,dn
\\=\int_{N_\Levi(F)\bs N_\Levi(\A)}\int_{U(\A)} \varphi(\levi(\spclt)w_U ung)\psi_{N_\Levi}^{-1}(n)\,du\,dn
\\=\int_{U(\A)} \whit^{\psi_{N}}(\varphi, \levi(\spclt)w_U ug)\,du
\end{multline*}
and the relation \eqref{eq: globtoloc1} follows from \eqref{eq: unramwhit}.
It follows that when $S$ is sufficiently large,
\[
\whit^{\psi_{N_\Levi}}(\resM\varphi)=\resm^S(\pi)\prod_{v\in S}M_v(\frac12)W_v\prod_{v\notin S}W_v
\]
where
\[
\resm^S(\pi)=\lim_{s\rightarrow\frac12}(s-\frac12)^k m^S(\pi,s)=
2^{-k}\frac{L^S(\frac12,\pi)}{L^S(\frac32,\pi)}
\frac{\lim_{s\rightarrow1}(s-1)^kL^S(s,\pi,\wedge^2)}{L^S(2,\pi,\wedge^2)}.
\]
We conclude from \eqref{eq: whitdescent} and Lemma \ref{L: whitform} that
for any factorizable $\varphi\in\AF(\pi)$ we have
\begin{equation} \label{eq: Fourierofdescent}
\tilde\whit^{\psi_{\tilde N}}(\FJ(\reseisen\varphi,\Phi),\tilde g)=\resm^S(\pi)
\prod_{v\in S}\whitform_v(M_v(\frac12)W_v,\Phi_v,\tilde g_v,-\frac12)
\end{equation}
for any $\Phi=\otimes_v\Phi_v\in\swrz(\A^n)$ where
$\whit^{\psi_{N_\Levi}}(\varphi,\cdot)=\prod_vW_v$.
Of course, $S$ is large enough so that $\Phi_v$ is the standard function for $v\notin S$.

At this stage we can conclude that $\des(\pi_v)$ is irreducible for all $v$.
This follows from \eqref{eq: Fourierofdescent} and the irreducibility of the global descent.

\subsection{Rankin--Selberg integrals}

Recall that we use the Tamagawa measure on $G'(\A)$ and $G'(F_v)$
with respect to the standard gauge form on $G'$ and the character $\psi$.
We have $\vol(K'_v)=\Delta_{\Sp_n,v}(1)^{-1}$ for all finite places.

For any genuine functions $\phi_1$, $\phi_2$ on $G'(F)\bs\tilde G(\A)$ we write
\[
\sprod{\phi_1}{\phi_2}_{G'}=\int_{G'(F)\bs G'(\A)}\phi_1(g)\phi_2(g)\ dg
\]
provided that the integral converges.
We summarize the main properties of the Ginzburg--Rallis--Soudry zeta integrals defining the Rankin--Selberg $L$-functions
for $\tilde G'\times\GLnn$.

\begin{theorem} (\cite[Theorem 10.4, (10.6), (10.63)]{MR2848523})  \label{thm: RS}
Let $\tilde\sigma\in\Cusp_{\psi_{\tilde N}^{-1}}\tilde G$
and $\pi$ an automorphic representation of $\GLnn(\A)$ considered as a representation of $\Levi(\A)$.
For any $\tilde\varphi\in\tilde\sigma$, $\varphi\in\AF(\pi)$ and $\Phi\in\swrz(\A^n)$
we have
\[
\sprod{\tilde\varphi}{\FJ(\eisen(\varphi,s),\Phi)}_{G'}=
\int_{N'(\A)\bs G'(\A)}\tilde\whit^{\psi_{\tilde N}^{-1}}(\tilde\varphi,\tilde g)
\whitform(\varphi,\Phi,\tilde g,s)\ dg
\]
for $\Re s\gg0$. The left-hand side is a meromorphic function in $s$ which is holomorphic whenever
$\eisen(\varphi,s)$ is.
Furthermore, if $\Phi=\otimes\Phi_v$, $\whit^{\psi_{N_\Levi}}(\varphi,g)=\prod_vW_v(g_v)$ and
$\tilde\whit^{\psi_{\tilde N}^{-1}}(\tilde\varphi)=\prod_v\tilde W_v$ then
for any sufficiently large finite set of places $S$ we have
\begin{equation}\label{eq: globalinner}
\sprod{\tilde\varphi}{\FJ(\eisen(\varphi,s),\Phi)}_{G'}=
\Delta_{\Sp_n}^S(1)^{-1}
\frac{L^S_\psi(s+\frac12,\tilde\sigma\otimes\pi)}
{L^S(s+1,\pi)L^S(2s+1,\pi,\wedge^2)}
\prod_{v\in S}\tilde{J}_v(\tilde W_v,W_v,\Phi_v,s).
\end{equation}
\end{theorem}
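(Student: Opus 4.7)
The plan is to establish the identity by the standard Rankin--Selberg unfolding of Ginzburg--Rallis--Soudry, then deduce the Euler factorization from uniqueness of Whittaker models.

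First, in the region $\Re s\gg 0$ where $\eisen(\varphi,s)$ converges absolutely, I would substitute the definitions to write
\[
\sprod{\tilde\varphi}{\FJ(\eisen(\varphi,s),\Phi)}_{G'}=\int_{G'(F)\bs G'(\A)}\tilde\varphi(\animg g)\int_{V(F)\bs V(\A)}\eisen(\varphi,s)(v\toG(g))\,\Theta^\Phi_{\psi_{N_\GLnn}^{-1}}(v\animg g)\,dv\,dg.
\]
Expanding $\eisen(\varphi,s)$ as a sum over $P(F)\bs G(F)$ and interchanging sum and integrals, the sum collapses into orbits for the right action of $V(F)\toG(G'(F))$ on $P(F)\bs G(F)$. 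By the Bruhat decomposition these orbits are finite in number, and the next step would be to show that only the open orbit represented by $\gamma$ contributes: on each non-open orbit, the twisted Jacquet integrations embedded in the theta kernel $\Theta^\Phi_{\psi_{N_\GLnn}^{-1}}$ together with the non-degeneracy of the resulting character annihilate the constant terms of $\varphi_s$ by cuspidality of the factors $\pi_i$, exactly as in \cite[\S9]{MR2848523}.

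On the open orbit the stabilizer of $\gamma P(F)$ inside $(V\toG(G'))(F)$ equals $V_\gamma(F)\cdot\toG(N'(F))$. Using \eqref{eq: weilext}--\eqref{eq: weilext2} to rearrange the Weil representation factors and collapsing the inner $N'(F)\bs N'(\A)$ integration against $\tilde\varphi$ produces the Whittaker coefficient $\tilde\whit^{\psi_{\tilde N}^{-1}}(\tilde\varphi,\animg g)$, yielding the claimed integral representation. Meromorphic continuation in $s$ is inherited directly from that of $\eisen(\varphi,s)$; at any $s$ where the Eisenstein series is holomorphic, absolute convergence of the right-hand side is ensured by the rapid decay of $\tilde\whit^{\psi_{\tilde N}^{-1}}(\tilde\varphi,\cdot)$ on $N'(\A)\bs G'(\A)$ combined with the moderate-growth estimates on $\whitform(\varphi,\Phi,\cdot,s)$ provided by Lemmas \ref{L: whitform} and \ref{L: whitformarch}.

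For the factorization, assume all data factor and that $\tilde\whit^{\psi_{\tilde N}^{-1}}(\tilde\varphi)=\prod_v\tilde W_v$, $\whit^{\psi_{N_\Levi}}(\varphi)=\prod_v W_v$ as stipulated. Uniqueness of local Whittaker models implies $\whitform(\varphi,\Phi,\tilde g,s)=\prod_v\whitform_v(W_v,\Phi_v,\tilde g_v,s)$, so that the integral over $N'(\A)\bs G'(\A)$ becomes Eulerian and factors into the local Shimura-type integrals $\tilde{J}_v(\tilde W_v,W_v,\Phi_v,s)$. For $v\notin S$ the unramified evaluation \eqref{eq: Junram} identifies each local factor with the quoted ratio of local $L$-factors; collecting these with our measure convention $\vol(K'_v)=\Delta_{\Sp_n,v}(1)^{-1}$ produces the partial $L$-function expression with the $\Delta_{\Sp_n}^S(1)^{-1}$ prefactor, which is \eqref{eq: globalinner}. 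The main obstacle is the orbit analysis itself --- carefully identifying the non-open orbits and verifying their vanishing via a chain of Fourier expansions and applications of cuspidality --- which is precisely the heart of \cite[Theorem 10.4]{MR2848523}.
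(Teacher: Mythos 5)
Your sketch reproduces, at the level of strategy, the Rankin--Selberg unfolding argument from \cite{MR2848523}, which is exactly what this theorem rests on; the paper itself does not reprove the theorem, it cites \cite[Theorem 10.4, (10.6), (10.63)]{MR2848523} and adds a remark translating GRS's conventions (different Weyl element, different normalization of measures, typos) into the paper's own. So in broad terms you have identified the correct route.

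Two imprecisions are worth flagging. First, you attribute the vanishing of the non-open orbit contributions to ``cuspidality of the factors $\pi_i$.'' Note, however, that the theorem is stated for $\pi$ an arbitrary automorphic representation of $\GLnn(\A)$, with the cuspidality hypothesis placed on $\tilde\sigma$: it is the rapid decay of $\tilde\varphi\in\tilde\sigma$ and the cuspidality of $\tilde\varphi$ (killing its constant terms along the unipotent radicals that appear when one conjugates the degenerate characters coming from the theta kernel into $\tilde G$) that eliminates the lower Bruhat cells. This is why the statement requires no cuspidality of $\pi$ in the first identity, and is applied in the paper to $\pi\in\Mcusp\GLnn$, which is not cuspidal. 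Second, the Eulerian factorization does not really invoke ``uniqueness of local Whittaker models''; once one stipulates that $\tilde\whit^{\psi_{\tilde N}^{-1}}(\tilde\varphi)$ and $\whit^{\psi_{N_\Levi}}(\varphi,\cdot)$ factor into local Whittaker functions, the integrand of the right-hand side factors by direct inspection of the definitions of $\whitform$ and $\wevinv$, and the unramified computation \eqref{eq: Junram} together with $\vol(K'_v)=\Delta_{\Sp_n,v}(1)^{-1}$ supplies the $L$-factors and the $\Delta_{\Sp_n}^S(1)^{-1}$ constant. Finally, since the paper's version of the statement intentionally differs from GRS (the element $\gamma$ is used in place of the one in \cite[(10.6)]{MR2848523}, and the character conventions differ as explained in Remark~\ref{rem: diffgrs}), a complete argument would need to track this change of representative for the open orbit; your sketch passes over this translation, which is precisely what the paper's remark after the theorem is devoted to.
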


\begin{remark}
The factor $\Delta_{\Sp_n}^S(1)^{-1}$ shows up because of our choice of global and local measures.
Note that in \cite[Theorem 10.4, (10.6)]{MR2848523} the element
$\left(\begin{smallmatrix}&I_n&&\\&&&-I_n\\I_n&&&\\&&I_n&\end{smallmatrix}\right)$
was used instead of $\gamma$. Also note the following typos in [loc.~cit.]:
on p. 294 the definition of $\eta=\eta_0$ should be changed to $\diag(I_l,\sm{}{I_{m-l-1}}1{}^\vee
\left(\begin{smallmatrix}&&-1\\&I_{2(m-l-1)}&\\1&&\end{smallmatrix}\right),I_l)$
so that the formula on the last line of p. 296 holds with $-x'$ instead of $x'$,
which is necessary for the ensuing discussion on p. 297.
Also, in the definition of the character $\psi_{N_G}$ on p. 298 the $z_{i,i+1}$'s should be replaced
by their negatives in order for the last equation on that page to hold.
This is compatible with the fact that the character in [ibid., (10.7)]
differs from $\psi_{N_\Levi}$ used in $\whitform$.
\end{remark}

In particular when $\pi\in\Mcusp\GLnn$ and $\tilde\varphi\in\tilde\sigma=\desinv(\pi)$ we have $L^S_\psi(s,\tilde\sigma\otimes\pi)=L^S(s,\pi\otimes\pi)$.
Thus, the right-hand side of \eqref{eq: globalinner} has a pole of order at least $k$ at $s=\frac12$ for suitable $\varphi$, $\tilde\varphi$ and $\Phi$,
since $\tilde{J}_v(\tilde W_v,W_v,\Phi_v,s)$ is non-vanishing at $s=\frac12$ for suitable $W_v$, $\tilde W_v$ and $\Phi_v$.
On the other hand, the left-hand side of \eqref{eq: globalinner} has a pole of order $\le k$ because this is true for $\eisen(\varphi,s)$ and $\tilde\varphi$ is rapidly decreasing.
Multiplying \eqref{eq: globalinner} by $(s-\frac12)^k$ and taking the limit as $s\rightarrow\frac12$
we conclude that $\tilde{J}_v(\tilde W_v,W_v,\Phi_v,s)$ is holomorphic at $s=\frac12$ for all $v$ (a fact which we already noted before) and
\begin{equation} \label{eq: innerprod}
\sprod{\tilde\varphi}{\FJ(\reseisen\varphi,\Phi)}_{G'}=
\Delta_{\Sp_n}^S(1)^{-1}
\frac{\lim_{s\rightarrow1}(s-1)^kL^S(s,\pi\otimes\pi)}
{L^S(\frac32,\pi)L^S(2,\pi,\wedge^2)}
\prod_{v\in S}\tilde{J}_v(\tilde W_v,W_v,\Phi_v,\frac12).
\end{equation}

\subsection{Proof of Theorem \ref{thm: local to global}} \label{sec: proof}

We already showed that for all $v$ $\desinv(\pi_v)$ (and similarly $\des(\pi_v)$) is irreducible and that $\tilde{J}_v(\tilde W_v,W_v,\Phi_v,s)$
is holomorphic at $s=\frac12$. Fix a place $v_0$.
To show that \eqref{eq: factorsthru} holds for $\pi_{v_0}$
suppose that $\Phi=\otimes\Phi_v$, $\whit^{\psi_{N_\Levi}}(\varphi,g)=\prod_vW_v(g_v)$ and
$\tilde\whit^{\psi_{\tilde N}^{-1}}(\tilde\varphi)=\prod_v\tilde W_v$.
Assume that $W_{v_0}$ and $\Phi_{v_0}$ are such that $\whitform(M_{v_0}(\frac12)W_{v_0},\Phi_{v_0},\cdot,-\frac12)\equiv 0$.
Then by \eqref{eq: Fourierofdescent}  $\tilde\whit^{\psi_{\tilde N}}(\FJ(\reseisen\varphi,\Phi),\cdot)\equiv 0$ and therefore $\FJ(\reseisen\varphi,\Phi)\equiv0$
by the irreducibility and genericity of the descent. (We thank Atsushi Ichino for this observation.)
By \eqref{eq: innerprod} we conclude that $\tilde{J}_{v_0}(\tilde W_{v_0},W_{v_0},\Phi_{v_0},\frac12)=0$.
Thus, $\pi_{v_0}$ is good.

By Remark~\ref{rem: diffgrs}, it remains to show \eqref{eq: modglobalidentity} with our specific choice of compatible set of characters.
Suppose that $\pi\in\Mcusp\GLnn$.
Since the local components of $\pi_1,\dots,\pi_k$ are of metaplectic type, it follows
from Lemma \ref{lem: simpleind} that the same is true for $\pi$.
We know that both sides of \eqref{eq: globalidentity} are proportional. Let $c_\pi$ be the constant of proportionality.
Combining \eqref{eq: innerprod} with \eqref{eq: Fourierofdescent} we conclude \eqref{eq: main} holds for $\pi_v$ for all $v$ with
$c_{\pi_v}=1$ for almost all $v$ (by Proposition \ref{prop: unram}), and that $c_\pi=\prod_vc_{\pi_v}$.

\subsection{Local conjecture} \label{sec: localconj}
Let $F$ be a local field of characteristic $0$.
Motivated by Theorem  \ref{thm: local to global} we make the following:
\begin{conjecture} \label{conj: goodeps}
For any \good\ unitarizable $\pi\in\Irr_{\gen,\meta}\GLnn$ we have $c_\pi=\epsilon(\frac12,\pi,\psi)$.
\end{conjecture}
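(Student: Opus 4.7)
The plan is to reduce Conjecture~\ref{conj: goodeps} in three stages: an unramified sanity check, a multiplicativity step splitting $\pi$ according to Matringe's classification, and a square-integrable core case to be solved via the formal degree conjecture. The first stage is already contained in Proposition~\ref{prop: unram}, which gives $c_\pi = 1$ when everything is unramified, matching $\epsilon(\frac12,\pi,\psi) = 1$.

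For the second stage, by Theorem~\ref{thm: genmetaclassification} any unitarizable $\pi \in \Irr_{\gen,\meta}\GLnn$ decomposes as
\[
\pi = (\sigma_1 \times \d\sigma_1) \times \cdots \times (\sigma_k \times \d\sigma_k) \times \tau_1 \times \cdots \times \tau_l,
\]
with the $\sigma_i$ essentially square-integrable and the $\tau_j$ square-integrable of metaplectic type. I would prove that $c_\pi$ is multiplicative with respect to this induced decomposition, paralleling the multiplicativity of $\epsilon(\frac12,\pi,\psi)$. The key ingredients are: the intertwining operator $M(\frac12)$ factors through co-rank one operators, all holomorphic by the proof of Proposition~\ref{prop: M1/2}; the Whittaker transform $\whitform$ respects parabolic induction in a manner modeled on the proof of Lemma~\ref{lem: simpleind}; and the functional equation~\eqref{eq: local functional equation} combined with the factorization of local $L$- and $\gamma$-factors should propagate multiplicativity through~\eqref{eq: main}.

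For the atomic piece $\sigma \times \d\sigma$, the identity $\epsilon(\frac12,\sigma\times\d\sigma,\psi) = \omega_\sigma(-1)$ together with the explicit $H_\GLnn$-invariant construction already built in the proof of Lemma~\ref{lem: simpleind} (specifically the meromorphic family $\lambda(s)$ and its relation to the Bump--Friedberg/Friedberg--Jacquet zeta integral) should give $c_{\sigma\times\d\sigma} = \omega_\sigma(-1)$ by a direct computation. The genuinely hard atomic case is $\tau \in \Irr_{\sqr,\meta}\GL_{2m}$. Here $\desinv(\tau)$ is supercuspidal by~\cite{MR1671452, MR1954940}, so the stable integral in~\eqref{eq: main} converges absolutely and can be rewritten as a compactly supported matrix-coefficient-type integral. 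One would then express $c_\tau$ as a ratio involving the formal degree of $\desinv(\tau)$ against an appropriately normalized period attached to $\tau$, and invoke the Hiraga--Ichino--Ikeda formal degree conjecture---known archimedean-locally by Harish-Chandra, and announced in the introduction to follow in the $p$-adic metaplectic case from Arthur and Gan--Ichino---to conclude $c_\tau = \epsilon(\frac12,\tau,\psi)$.

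The main obstacle is this last step: matching $c_\tau$ to the formal degree with exactly the correct constants. The normalization issues visible throughout the paper---powers of $2$, the Weil factor $\gamma_\psi$ versus $\gamma_{\psi_{\frac12}}$, the deviation from the character conventions of~\cite{MR2848523}, and the interplay between Tamagawa measures and the gauge-form local measures---already signal that substantial bookkeeping is required, which is presumably why the authors defer the actual proof to their subsequent papers~\cite{1404.2905, 1404.2909}. By contrast, the multiplicativity reduction of the second stage and the $\sigma \times \d\sigma$ case should be technically demanding but conceptually routine, once the behavior of $\whitform$, $\tilde{J}$ and $M(\frac12)$ under parabolic induction on $\GLnn$ is firmly established.
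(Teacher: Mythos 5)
This statement is a conjecture the paper leaves open; the $p$-adic proof is deferred to~\cite{1404.2905} and the archimedean square-integrable case to~\cite{1404.2909}. What the paper actually supplies is Proposition~\ref{prop: unram} (the unramified case, giving $c_\pi=1$) and the purely formal $n=1$ computation of \S\ref{sec: n=1}. Your stage~1 matches Proposition~\ref{prop: unram}. Your stage~3, however, inverts the paper's logical order in the $p$-adic case and is circular there: the introduction explains that the local identity is proved \emph{directly} in~\cite{1404.2905}, and that~\cite{1404.2909} then uses it (together with Arthur's work and Gan--Ichino) to \emph{establish} the formal degree conjecture for $\Mp_n$, which was not previously known $p$-adically. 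The formal-degree argument runs forward only in the archimedean direction, where Harish-Chandra already supplies the input; that part of your plan does agree with the authors' announced strategy.

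The paper's $n=1$ evidence also rests on a mechanism you do not engage with. The factor $\epsilon(\frac12,\pi,\psi)$ enters through the relation $L'_W=\fel L_W$ in~\eqref{eq: epsilon12}, which is the $\GL_2$ local functional equation applied to the $H_\GLnn$-invariant (torus/Hecke) period as spelled out in \S\ref{S:metan=1}; the rest of \S\ref{sec: n=1} is a delicate formal manipulation using the $\GL_2$-invariance of the Whittaker pairing~\eqref{eq: fen=1} and a carefully chosen $b\in\GL_2$. As for your stage~2, multiplicativity of $c_\pi$ along Matringe's factorization is a natural target, but nothing in the present paper establishes it, and it is far from automatic: $\whitform$ and $\tilde{J}$ involve the Weil representation on $V\rtimes\tilde G$ and do not visibly factor under parabolic induction on $\GLnn$, while the functional $\lambda(s)$ of Lemma~\ref{lem: simpleind} lives entirely on the $\GLnn$ side and says nothing, by itself, about the stable integral over $G'$ in~\eqref{eq: main} that actually defines $c_\pi$. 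Connecting the two is precisely what \S\ref{sec: n=1} attempts, formally and only for $n=1$, and it is already nontrivial there.
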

(Recall that $\eps_\pi=\epsilon(\frac12,\pi,\psi)$ in the $p$-adic case.)
By Theorem \ref{thm: local to global}, Conjecture \ref{conj: goodeps} implies Conjecture \ref{conj: metplectic global}
since $\prod_v\epsilon(\frac12,\pi,\psi_v)=1$ as $L(\frac12,\pi)\ne0$.

Recall that by Proposition \ref{prop: unram}, Conjecture \ref{conj: goodeps} holds in the unramified case.

In fact, it is natural to expect the following stronger version of Conjecture \ref{conj: goodeps}.

\begin{conjecture} \label{conj: later}
Any $\pi\in\Irr_{\gen,\meta}\GLnn$ is \good\ and satisfies $c_\pi=\epsilon(\frac12,\pi,\psi)$.
\end{conjecture}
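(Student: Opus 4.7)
The plan is to proceed via Matringe's classification (Theorem~\ref{thm: genmetaclassification}), reducing the general case to essentially square-integrable building blocks, then to combine a globalization with the product formula for $c_\pi$ already established in Theorem~\ref{thm: local to global}. Concretely, any $\pi\in\Irr_{\gen,\meta}\GLnn$ decomposes as $\sigma_1\times\d\sigma_1\times\dots\times\sigma_k\times\d\sigma_k\times\tau_1\times\dots\times\tau_l$ with $\tau_j\in\Irr_{\sqr,\meta}\GL_{2m_j}$ and $\sigma_i$ essentially square-integrable. Since $\epsilon(\frac12,\cdot,\psi)$ is multiplicative in parabolic induction and $\epsilon(\frac12,\sigma\times\d\sigma,\psi)=\omega_\sigma(-1)$, Lemma~\ref{lem: simpleind} already matches $\eps_\pi$ with $\epsilon(\frac12,\pi,\psi)$; the task is to propagate this identification to $c_\pi$.

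First, I would handle the square-integrable case $\pi\in\Irr_{\sqr,\meta}\GLnn$. By Proposition~\ref{prop: metasqr} such $\pi$ is of Shalika type and $I(\pi,\frac12)$ is reducible. One expects the descent $\desinv(\pi)$ to be irreducible and square-integrable generic on $\tilde G$, which should follow by analyzing the Jacquet module of $\pi'=\mathrm{Im}\,M(\frac12)$ with respect to $V$ along the lines of \cite{MR1671452, MR1954940}, combined with Casselman's criterion applied to the exponents coming from the Langlands data of $\pi$. Goodness then reduces to \eqref{eq: factorsthru}, which in turn follows from the functional equation \eqref{eq: local functional equation} together with the order of the pole of $\gamma(\tilde\pi\otimes\pi,s,\psi)$ at $s=0$, exactly as in the $\Irr_{\mcusp}$ case. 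For the identity $c_\pi=\epsilon(\frac12,\pi,\psi)$ in this range, I would invoke the strategy of the subsequent paper \cite{1404.2909}: identify $c_\pi$ with the proportionality constant between a local Whittaker period and the formal degree, and then apply the Hiraga--Ichino--Ikeda conjecture (known for $\tilde G$ via Arthur and Gan--Ichino in the $p$-adic case, and via Harish-Chandra \cite{MR0439994} in the archimedean case).

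Second, I would establish multiplicativity of $c_\pi$ under parabolic induction from $\Irr_{\gen,\meta}$-factors. The idea is to compute $c_{\pi_1\times\pi_2}$ by a Mackey/Fubini argument: decompose $M(\frac12)$ into corank-one operators via Bruhat cells, exploit the commuting relations between the intertwining operator, the Fourier--Jacobi transform $\whitform$, and the unipotent integration defining the regularized integral $\stint$, and then recognize the remaining factor as $c_{\pi_1}c_{\pi_2}$ via the uniqueness of the pairing in \eqref{eq: factorsthru}. The same kind of calculation should yield $c_{\sigma\times\d\sigma}=\omega_\sigma(-1)$ for essentially square-integrable $\sigma$ by mimicking the pairing $\lambda$ constructed in Lemma~\ref{lem: simpleind}, since the symmetric-space intertwining functional is characterized by its eigenvalue under the long Weyl element.

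Third, to close the loop and eliminate analytic/topological subtleties (especially at archimedean places), I would use a globalization argument: given a local $\pi_{v_0}\in\Irr_{\gen,\meta}\GLnn_{v_0}$, embed it as the $v_0$-component of some $\Pi\in\Mcusp\GLnn$ whose other ramified components are already handled by Steps~1--2 or are unramified (Proposition~\ref{prop: unram}), using Poincaré series and trace formula techniques compatible with the Shalika period. Applying Theorem~\ref{thm: local to global} together with the global functional equation $\prod_v\epsilon(\frac12,\Pi_v,\psi_v)=1$ (valid since $L(\frac12,\Pi)\neq 0$) then forces $c_{\pi_{v_0}}=\epsilon(\frac12,\pi_{v_0},\psi_{v_0})$. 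The main obstacle I expect is two-fold: first, proving the factorization \eqref{eq: factorsthru} for non-square-integrable $\pi$ where $\tilde{J}(\cdot,\cdot,\cdot,s)$ need not be entire and the analytic continuation to $s=\frac12$ must be controlled carefully against the image of $M(\frac12)$; second, producing a globalization that simultaneously meets the metaplectic-type condition at \emph{every} place and controls ramification everywhere else, so that the product formula actually isolates $c_{\pi_{v_0}}$ cleanly. The multiplicativity step is likely to be the most intricate computation, as it requires identifying hidden cancellations between the intertwining operator and the Fourier--Jacobi integral.
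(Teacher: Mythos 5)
This statement is Conjecture~\ref{conj: later} in the paper, and the paper does \emph{not} prove it: the entire point of the article is to \emph{reduce} the global Whittaker--Fourier conjecture to this local statement (Theorem~\ref{thm: local to global}), leaving its proof to subsequent work. The only supporting material the paper supplies is the explicitly heuristic computation of \S\ref{sec: n=1} for $n=1$, which is a direct formal manipulation of the integrals $\whitform$, $M^*$ and the $H$-invariant functional $L_W$ leading to \eqref{goal: n=1} via the $\GL_2$ functional equation \eqref{eq: epsilon12}; the authors note that the seven-dimensional integral on the left of \eqref{goal: n=1} is not absolutely convergent, so even this is not rigorous. Thus there is no paper proof with which to align your proposal, and the paper's partial evidence uses a completely different method (direct local computation, no decomposition into square-integrable blocks, no globalization).

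As a research outline, your three-step plan has concrete gaps. Step~1 is circular for $p$-adic square-integrable $\pi$: the introduction explains that \cite{1404.2905} proves the local identity directly in the $p$-adic case, and \cite{1404.2909} then \emph{uses} that result, together with Arthur and Gan--Ichino, to \emph{establish} the formal degree conjecture for $\Mp_n$; FDC for $\Mp_n$ is the output, not a usable input, in the $p$-adic setting (only at archimedean places, where FDC is known by Harish-Chandra, does your direction work). Step~2, multiplicativity of $c_\pi$ under parabolic induction from metaplectic-type blocks, is the heart of the matter and you leave it as an expected but unperformed computation; this is precisely the kind of delicate interchange of the intertwining operator, the Fourier--Jacobi integral, and the regularized $\stint_{N'}$ integral that the paper itself only carries out formally (for $n=1$) and flags as analytically problematic. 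Step~3 requires globalizing a given $\pi_{v_0}\in\Irr_{\gen,\meta}\GLnn$ to some $\Pi\in\Mcusp\GLnn$ with $\Pi_{v_0}\cong\pi_{v_0}$, the remaining ramified places controlled, and the stringent global conditions (nonvanishing of $L(\frac12,\pi_i)$ and a pole of $L^S(s,\pi_i,\wedge^2)$ at $s=1$ for each cuspidal factor) all satisfied simultaneously; such a globalization is a substantial open problem in its own right and is nowhere addressed in the paper.
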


\section{The case $n=1$ -- formal computation} \label{sec: n=1}

We will substantiate Conjecture \ref{conj: later} (or more precisely, the identity $c_\pi=\eps_\pi$) by considering the case $n=1$
which is already non-trivial and contains many of the ingredients for the general case.
We will confine ourselves in this section to a {\bf heuristic argument},
treating all integrals as if they were absolutely convergent.

Throughout let $F$ be a local field.

We fix an irreducible generic representation $\pi$ of $\GLnn=\GL_2$ with a
trivial central character. For convenience we will also consider $\pi$ as a representation of $\Levi$ through $\levi$.
Recall that $\psi_{N_\GLnn}(\sm{1}{x}{}{1})=\psi(x)$ and $ \psi_{\tilde N}(\sm{1}{x}{}{1})=\psi(-\frac12 x)$.

\subsection{}
We first  describe $\whitform(W,\Phi,\tilde g,s)$ for $g=w'_{U'}mu$ in the big cell $U'w'_{U'}P'$, where $m\in \Levi'$ and $u\in U'$.
From \eqref{eq: defwhitform},
\[
\whitform(W,\Phi,\widetilde{{w'_{U'}}}\animg m\animg u,s)=
\int_{V_\gamma\bs V}W_s(\gamma v \toG(w'_{U'}mu))\wevinv(v\widetilde{w'_{U'}}\animg m\animg u)\Phi(\xi_n)\ dx\ dy.
\]
Recall $\wevinv$ was defined in \eqref{eq: weilext}, $\gamma=w_U\toG(w'_{U'})^{-1}$ and $V_\gamma=\toG(w'_{U'})(V\cap N_\Levi)\toG(w'_{U'})^{-1}$.
It is convenient to make a change of variable $v\mapsto \toG(w'_{U'})v\toG(w'_{U'})^{-1}$. Note that $\toG(w'_{U'})$ normalizes $V$.
By \eqref{eq: weilext2} the integral becomes:
\[
\int_{(V\cap N_\Levi)\bs V}W_s(w_U v \toG(mu))\wevinv(\widetilde{w'_{U'}} v\animg m\animg u)\Phi(\xi_n)\ dv.
\]

When $n=1$, the group of $\{v(x,y)=\toU(\sm{x}{y}{}{x})\}$ is a section of $(V\cap N_\Levi)\bs V$. Explicitly we have:
\begin{multline}\label{eq: n=1whitform}
\whitform(W,\Phi,\widetilde{\sm{}1{-1}{}}\widetilde{\sm t{tz}{}{t^{-1}}},s)=
\\
\iint_{F^2}
W_s(\left(\begin{smallmatrix}&&1&\\&&&1\\{-1}&&&\\&-1&&\end{smallmatrix}\right)
\left(\begin{smallmatrix}1&&x&y\\&1&&x\\&&1&\\&&&1\end{smallmatrix}\right)
\left(\begin{smallmatrix}1&&&\\&t&tz&\\&&t^{-1}&\\&&&1\end{smallmatrix}\right))
\wevinv(\widetilde{\sm{}1{-1}{}} \left(\begin{smallmatrix}1&&x&y\\&1&&x\\&&1&\\&&&1\end{smallmatrix}\right)
\widetilde{\sm t{tz}{}{t^{-1}}})\Phi(1)\ dv.
\end{multline}

For $\Phi\in\swrz(F)$ and $f\in C^\infty(\Sp_2)$ define
\[
\Phi*f(g)=\int_F f(g\left(\begin{smallmatrix}1&r&&\\&1&&\\&&1&{-r}\\&&&1\end{smallmatrix}\right))\Phi(r)\ dr.
\]

\begin{claim}
\begin{multline} \label{eq: whitn1}
\whitform(W,\Phi,\widetilde{\sm{}1{-1}{}}\widetilde{\sm{t}{tz}{}{t^{-1}}},s)
=\\\abs{t}^{\frac12}\nu_\psi
\iint_{F^2}(\Phi*(W_s))(\left(\begin{smallmatrix}t^{-1}&&&\\&1&&\\&&1&\\&&&t\end{smallmatrix}\right)
\left(\begin{smallmatrix}1&&&\\&1&&\\x&y&1&\\-z&x&&1\end{smallmatrix}\right)
\left(\begin{smallmatrix}&&1&\\&&&1\\{-1}&&&\\&-1&&\end{smallmatrix}\right))\psi(\frac12 y)\,dx\,dy
\end{multline}
where $\nu_\psi$ is a certain root of unit determined by $\psi$ satisfying $\nu_\psi\nu_{\psi^{-1}}=1$.
\end{claim}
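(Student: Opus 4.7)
The plan is to unfold both sides of \eqref{eq: whitn1} into matching forms. First I would unfold the Weil representation factor $\wevinv(\widetilde{w'_{U'}}v(x,y)\widetilde{\sm{t}{tz}{}{t^{-1}}})\Phi(1)$ using \eqref{eq: weilH1}--\eqref{eq: weilH3} and \eqref{eq: weil1}--\eqref{eq: weil3}. The Heisenberg projection of $v(x,y)=\toU(\sm{x}{y}{}{x})$ is $((0,x),\tfrac{1}{2}y)$, so it acts on $\Phi(X)$ by the phase $\psi(-Xx-\tfrac{1}{2}y)$. Applying the Bruhat decomposition $\sm{t}{tz}{}{t^{-1}}=\levi'(t)\sm{1}{z}{}{1}$, the $\sm{1}{z}{}{1}$-part contributes $\psi(-\tfrac{1}{2}X^{2}z)$, the $\levi'(t)$-part rescales by $|t|^{1/2}\beta_{\psi^{-1}}(\levi'(t))$ and sends $X\mapsto Xt$, and $\widetilde{w'_{U'}}$ supplies a Fourier transform with kernel $\psi^{-1}(XX')$. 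Substituting $r=X't$ in that Fourier integral identifies $r$ as the convolution variable of $\Phi*W_{s}$ appearing on the right-hand side.

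Next I would perform the change of variables $x=-tx'$, $y=-y'$ on the LHS (Jacobian $|t|$). A direct matrix calculation shows that $w_{U}v(0,x,y)\toG(\sm{t}{tz}{}{t^{-1}})$ becomes precisely $A_{t}B(x',y',z)w_{U}$, matching the RHS matrix at $r=0$, while the character factor coming from the Weil action transforms into $\psi(\tfrac{1}{2}y'+rx')\cdot\psi(-r/t-\tfrac{1}{2}r^{2}z)$. To absorb the $r$-dependence of the RHS I would expand the convolution and use the algebraic identity
\[
A_{t}B(x,y,z)w_{U}\toLevi(\sm{1}{r}{}{1})=\levi(\sm{1}{-r/t}{}{1})\cdot A_{t}B(x'',y'',z)w_{U},
\]
where $(x'',y'')=(x+rz,\,y-2rx-r^{2}z)$. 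This identity follows from $w_{U}\levi(\sm{1}{r}{}{1})=\levi(\sm{1}{-r}{}{1})w_{U}$, from the conjugation formula $\levi(n_{r})\toUbar(\sm{x}{y}{-z}{x})\levi(n_{-r})=\toUbar(\sm{x''}{y''}{-z}{x''})$, and from the torus--unipotent commutation $A_{t}\levi(\sm{1}{-r}{}{1})=\levi(\sm{1}{-r/t}{}{1})A_{t}$.

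By the $(N_{\Levi},\psi_{N_{\Levi}})$-equivariance of $W_{s}$ one extracts a factor $\psi(-r/t)$, and the substitution $(x,y)\to(x'',y'')$ (with unit Jacobian) in the RHS produces the remaining factor $\psi(rx''-\tfrac{1}{2}r^{2}z)$ from the expansion of $\psi(\tfrac{1}{2}y)$. After relabeling, the RHS takes exactly the same shape in $(r,x',y')$-coordinates as the LHS, with character $\psi(\tfrac{1}{2}y'+rx')$ inside and phase $\psi(-r/t-\tfrac{1}{2}r^{2}z)$ outside. The last task is to match constants: the $|t|^{-1/2}$ coming from the Weil scaling times the Jacobian $|t|$ from the substitution yields $|t|^{1/2}$, as claimed. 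The remaining product $\beta_{\psi^{-1}}(w'_{U'})\beta_{\psi^{-1}}(\levi'(t))$, combined with the Rao-cocycle discrepancy between $\widetilde{w'_{U'}}\widetilde{\sm{t}{tz}{}{t^{-1}}}$ and the canonical lift $\widetilde{w'_{U'}\sm{t}{tz}{}{t^{-1}}}$, must collapse to a single $t$-independent root of unity $\nu_{\psi}$ satisfying $\nu_{\psi}\nu_{\psi^{-1}}=1$. This absorption of all $t$-dependent Weil-index factors into a universal constant via Rao's cocycle is the main obstacle; the matrix identity and the changes of variables above are algebraic and straightforward by comparison.
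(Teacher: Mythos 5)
Your route is correct in outline and genuinely different from the paper's. Where you explicitly unfold $\wevinv(\widetilde{w'_{U'}}v(x,y)\animg m)\Phi(1)$ via the Bruhat decomposition $\sm{t}{tz}{}{t^{-1}}=\levi'(t)\sm{1}{z}{}{1}$ and then chase variables, the paper avoids almost all explicit Weil-index bookkeeping by two structural moves. After applying the Fourier transform coming from $\wevinv(\widetilde{w'_{U'}})$, it promotes the dual variable $Y$ to a Heisenberg coordinate: the kernel $\psi(-Y)$ is swallowed into the $(N,\psi_N)$-equivariance of $W_s$ (conjugating $\levi(\sm{1}{Y}{}{1})$ past $w_U$), and the translation $\Phi(\cdot+Y)$ into the $V$-action, turning the two-dimensional integral into a clean three-dimensional Heisenberg integral in $v(x,y,r)$. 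Then the substitution $v\mapsto\toG(m)v\toG(m)^{-1}$ together with the covariance formula \eqref{eq: weilext2} moves the metaplectic element through without any explicit cocycle accounting, and $\wevinv(\animg m)$ evaluated at $\Phi(0)$ yields the full factor $|t|^{1/2}\nu_\psi$ in a single step; the identity $\wevinv(v(x,y,r))\Phi(0)=\psi(-\tfrac12 y)\Phi(r)$ finishes the computation. Your approach buys a more concrete picture: the matrix identity $A_tB(x,y,z)w_U\levi(\sm{1}{r}{}{1})=\levi(\sm{1}{-r/t}{}{1})A_tB(x+rz,\,y-2rx-r^2z,\,z)w_U$ is correct (I verified it), and with the Jacobian $|t|$ from $x=-tx'$, $y=-y'$, and the net $|t|^{-1/2}$ from the Weil scaling plus the substitution $r=X't$, the two sides do line up. The tradeoff is that you must keep the $\beta_{\psi^{-1}}$-factors and the Rao sign visible by hand.

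On your stated ``main obstacle'': it is a non-issue, because the statement does not require $\nu_\psi$ to be independent of $t$ and $z$. The rest of \S\ref{sec: n=1} only uses the relation $\nu_\psi\nu_{\psi^{-1}}=1$ at the fixed group element, precisely to cancel the pair $\nu_\psi$, $\nu_{\psi^{-1}}$ arising from the two applications of \eqref{eq: whitn1} in ``Claim''~\ref{claim: phi} part~\ref{claim: mainleft}. Indeed, the paper's $\nu_\psi$ is (a Rao sign times) $\beta_{\psi^{-1}}(w'_{U'})\beta_{\psi^{-1}}(\levi'(t))$, which manifestly depends on $t$; the relation $\nu_\psi\nu_{\psi^{-1}}=1$ follows from the standard identity $\beta_\psi(g)\beta_{\psi^{-1}}(g)=1$ together with the Rao sign being a $\psi$-independent $\pm1$. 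Once you drop the $t$-independence requirement, your argument goes through as written.
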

\begin{proof}[``Proof"]
Using \eqref{eq: weil2} we get the expression \eqref{eq: n=1whitform} is
\[\nu_\psi'
\iiint_{F^3}
W_s(\left(\begin{smallmatrix}&&1&\\&&&1\\{-1}&&&\\&-1&&\end{smallmatrix}\right)
\left(\begin{smallmatrix}1&&x&y\\&1&&x\\&&1&\\&&&1\end{smallmatrix}\right)
\left(\begin{smallmatrix}1&&&\\&t&tz&\\&&t^{-1}&\\&&&1\end{smallmatrix}\right))
\wevinv(\left(\begin{smallmatrix}1&&x&y\\&1&&x\\&&1&\\&&&1\end{smallmatrix}\right)
\widetilde{\sm t{tz}{}{t^{-1}}})\Phi(Y)\psi(-Y)\,dY  \ dx\ dy.\]
Here $\nu_\psi'$ is a certain root of unit (determined by $\psi$) satisfying $\nu_\psi'\nu_{\psi^{-1}}'=1$.

By the equivariance of $W$ and  \eqref{eq: weilH1}, this is
\begin{multline*}
\nu_\psi'
\iiint_{F^3}
W_s(\left(\begin{smallmatrix}&&1&\\&&&1\\{-1}&&&\\&-1&&\end{smallmatrix}\right)
\left(\begin{smallmatrix}1&Y&&\\&1&&\\&&1&-Y\\&&&1\end{smallmatrix}\right)
\left(\begin{smallmatrix}1&&x&y\\&1&&x\\&&1&\\&&&1\end{smallmatrix}\right)
\left(\begin{smallmatrix}1&&&\\&t&tz&\\&&t^{-1}&\\&&&1\end{smallmatrix}\right))
\\
\wevinv(\left(\begin{smallmatrix}1&Y&&\\&1&&\\&&1&-Y\\&&&1\end{smallmatrix}\right)
\left(\begin{smallmatrix}1&&x&y\\&1&&x\\&&1&\\&&&1\end{smallmatrix}\right)
\widetilde{\sm t{tz}{}{t^{-1}}})\Phi(0)\,dY  \ dx\ dy.
\end{multline*}
Clearly the triple integral can be considered as the integration over Heisenberg group $\{v(x,y,r)=\toU(\sm{x}{y}{}{x})\levi(\sm{1}{r}{}{1})\}$:
\[
\nu_\psi'\iiint_{F^3}
W_s(\left(\begin{smallmatrix}&&1&\\&&&1\\{-1}&&&\\&-1&&\end{smallmatrix}\right)
v(x,y,r)\left(\begin{smallmatrix}1&&&\\&t&tz&\\&&t^{-1}&\\&&&1\end{smallmatrix}\right))
\wevinv(v(x,y,r)\widetilde{\sm t{tz}{}{t^{-1}}})\Phi(0)\,dr\ dx\ dy.\]
Make the change of variable $v(x,y,r)\mapsto \toG(\sm t{tz}{}{t^{-1}})v(x,y,r) \toG(\sm t{tz}{}{t^{-1}})^{-1}$; from \eqref{eq: weilext2} the integral becomes:
\[
\nu_\psi'\iiint_{F^3}
W_s(\left(\begin{smallmatrix}&&1&\\&&&1\\{-1}&&&\\&-1&&\end{smallmatrix}\right)
\left(\begin{smallmatrix}1&&&\\&t&tz&\\&&t^{-1}&\\&&&1\end{smallmatrix}\right)v(x,y,r))
\wevinv(\widetilde{\sm t{tz}{}{t^{-1}}}v(x,y,r))\Phi(0)\,dr\ dx\ dy.\]
From  \eqref{eq: weil1} and \eqref{eq: weil3}, the above becomes (for some root of unity $\nu_\psi$ satisfying $\nu_\psi\nu_{\psi^{-1}}=1$):
\[
\abs{t}^{\frac12}\nu_\psi\iiint_{F^3}
W_s(\left(\begin{smallmatrix}&&1&\\&&&1\\{-1}&&&\\&-1&&\end{smallmatrix}\right)
\left(\begin{smallmatrix}1&&&\\&t&tz&\\&&t^{-1}&\\&&&1\end{smallmatrix}\right)v(x,y,r))
\wevinv(v(x,y,r))\Phi(0)\,dr\ dx\ dy.\]
From \eqref{eq: weilH1}, \eqref{eq: weilH2} and
\eqref{eq: weilH3},
we get $\wevinv(v(x,y,r))\Phi(0)=\psi(-\frac12 y)\Phi(r)$.
Thus the above integral equals
\[ \abs{t}^{\frac12}\nu_\psi
\iint_{F^2}(\Phi*(W_s))(\left(\begin{smallmatrix}&&1&\\&&&1\\{-1}&&&\\&-1&&\end{smallmatrix}\right)
\left(\begin{smallmatrix}1&&&\\&t&tz&\\&&t^{-1}&\\&&&1\end{smallmatrix}\right)
\left(\begin{smallmatrix}1&&x&y\\&1&&x\\&&1&\\&&&1\end{smallmatrix}\right))
\psi(\frac12 y)^{-1}\,dx\,dy.
\]
The claim follows.
\end{proof}

\subsection{}\label{sec: localdescentn=1}
For simplicity for $W\in \Ind(\WhitML(\pi))$ we set: (note that $\spclt=\sm{1}{}{}{-1}$ when $n=1$)
\[
M^*W:=(M(\frac12)W)_{-\frac12}
=\iiint_{F^3} W_{\frac12}(\left(\begin{smallmatrix}1&&&\\&-1&&\\&&-1&\\&&&1\end{smallmatrix}\right)
\left(\begin{smallmatrix}&&1&\\&&&1\\-1&&&\\&-1&&\end{smallmatrix}\right)
\left(\begin{smallmatrix}1&&&\\&1&&\\x&y&1&\\z&x&&1\end{smallmatrix}\right)\cdot)\, dx\,dy\,dz.
\]

\begin{claim}\label{claim: phi}
\begin{enumerate}
\item \label{claim: whitfunct}
For any $W\in\Ind(\WhitML(\pi))$ we have $\whitform(W,\Phi,e,s)=\Mint(\Phi *(W_s))$ where
\begin{equation}\label{eq: defmintone}
\Mint(W):=
\int_F W (\left(\begin{smallmatrix}1&&&\\&1&&\\&x&1&\\&&&1\end{smallmatrix}\right)
\left(\begin{smallmatrix}&1&&\\&&&1\\-1&&&\\&&1&\end{smallmatrix}\right)
\left(\begin{smallmatrix}1&-1&&\\&1&&\\&&1&1\\&&&{1}\end{smallmatrix}\right) )\psi(\frac12 x)\, dx.
\end{equation}

\item \label{claim: mainleft}
Let $\tilde W=\whitformd(M(\frac12)\alt{W},\d{\Phi},\cdot,-\frac12)$ for some
$\alt{W}\in \Ind(\WhitMLd(\pi))$ and $\d{\Phi}\in \swrz(F)$.
Then the left-hand side of \eqref{eq: main} equals
\begin{equation} \label{one: beforefe}
\int_{F^*} I^{\psi}(\Phi *(W_{\frac12});\sm{t}{}{}{1})I^{\psi^{-1}}(\d{\Phi}*M^*\alt{W};\sm{t}{}{}{1})\abs{t}^{-3}\,dt
\end{equation}
where for any function $f\in C^{\infty}(\Sp_2)$ we set
\[
I^{\psi}(f;g):=\iiint_{F^3} f (\sm{g}{}{}{g^*}
\left(\begin{smallmatrix}1&&&\\&1&&\\x&y&1&\\z&x&&1\end{smallmatrix}\right)
\left(\begin{smallmatrix}&&1&\\&&&1\\-1&&&\\&-1&&\end{smallmatrix}\right) )\\\psi(\frac{y-z}{2})\ dx\ dy\ dz.
\]
\end{enumerate}
\end{claim}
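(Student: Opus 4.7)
This is a direct computation. For $n=1$ and $\tilde g=e$, unwind \eqref{eq: defwhitform} by parametrizing $V_\gamma\bs V$ via $v(x,y)=\toU(\sm{x}{y}{}{x})$, and carry out the same conjugation-by-$\gamma$ that was used in the ``proof'' of \eqref{eq: whitn1}. This converts the integrand into $W_s(\toUbar(\sm{x}{y}{0}{x})\gamma)$ times a Weil character and a translated $\Phi$-value, yielding
\[
\whitform(W,\Phi,e,s)=\iint_{F^2} W_s\bigl(\toUbar(\sm{x}{y}{0}{x})\gamma\bigr)\Phi(1-x)\psi(y/2)\,dx\,dy.
\]
On the other hand, expand $\Mint(\Phi*W_s)$ directly from \eqref{eq: defmintone}. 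The elementary matrix product
$\left(\begin{smallmatrix}1&-1&&\\&1&&\\&&1&1\\&&&1\end{smallmatrix}\right)\left(\begin{smallmatrix}1&r&&\\&1&&\\&&1&-r\\&&&1\end{smallmatrix}\right)=\levi(\sm{1}{r-1}{}{1})$ and the identification
$\left(\begin{smallmatrix}1&&&\\&1&&\\&x&1&\\&&&1\end{smallmatrix}\right)\gamma\,\levi(\sm{1}{r-1}{}{1})=\toUbar(\sm{1-r}{x}{0}{1-r})\gamma$
bring $\Mint(\Phi*W_s)$ into the same form after the substitution $\tilde x=1-r$.

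\textbf{Plan for Part (2).} Substitute the expression for $\tilde W$ into the LHS of \eqref{eq: main} and swap the orders of integration:
\[
\text{LHS}=\int_{N'\bs G'}\whitform(W,\Phi,\animg g,\tfrac12)\Bigl(\stint_{N'}\tilde W(\animg g\animg n)\psi_{\tilde N}(n)\,dn\Bigr)\,dg.
\]
Parametrize the open cell of $N'\bs G'$ by $g=w'_{U'}m(t)u(z)$ with $m(t)=\sm{t}{}{}{t^{-1}}$ and $u(z)=\sm{1}{z}{}{1}$, so that formally $\animg g=\widetilde{w'_{U'}}\animg{m(t)}\animg{u(z)}$ and $dg$ is proportional to $|t|^{-3}\,dt\,dz$. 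Since $N'=U'$ when $n=1$, the change of variable $n\mapsto u(z)^{-1}n$ absorbs $u(z)$ into $n$; using $\psi_{\tilde N}(u(z)^{-1})=\psi(z/2)$ the $z$- and $n$-integrals decouple, giving
\begin{multline*}
\text{LHS}=\int|t|^{-3}\,dt\cdot\Bigl[\int\whitform(W,\Phi,\widetilde{w'_{U'}}\widetilde{m(t)u(z)},\tfrac12)\psi(z/2)\,dz\Bigr]\\
\times\Bigl[\stint_{N'}\tilde W(\widetilde{w'_{U'}}\widetilde{m(t)u(n)})\psi(-n/2)\,dn\Bigr].
\end{multline*}
Now apply \eqref{eq: whitn1} to the first bracket and its $\whitformd$-analogue (replacing $W_s$ by $M^*\alt W$, $\nu_\psi$ by $\nu_{\psi^{-1}}$, and $\psi(y/2)$ by $\psi(-y/2)$) to the second. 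In both cases the ``outer'' variable ($z$ or $n$) appears linearly in the $(3,1)$-entry of the $\toUbar$-element inside the integrand; combining the $dz$ (resp.\ $dn$) integration with the already-present $(x,y)$ integral and aligning characters via the sign change $z\mapsto -z$ turns the first bracket into $|t|^{1/2}\nu_\psi I^\psi(\Phi*W_{1/2};\sm{t^{-1}}{}{}{1})$ and the second into $|t|^{1/2}\nu_{\psi^{-1}}I^{\psi^{-1}}(\d\Phi*M^*\alt W;\sm{t^{-1}}{}{}{1})$. The substitution $t\mapsto t^{-1}$, combined with $\nu_\psi\nu_{\psi^{-1}}=1$, then yields \eqref{one: beforefe}.

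\textbf{Main obstacle.} The difficulty is not any individual step but the simultaneous bookkeeping of three kinds of auxiliary data: (a) the Weil-representation constants $\nu_\psi,\nu_{\psi^{-1}}$ and the $\beta_\psi$-factors in Rao's cocycle; (b) the metaplectic $2$-cocycles implicit in the identification $\animg{g_1}\animg{g_2}\leftrightarrow\widetilde{g_1g_2}$; and (c) the cumulative power of $|t|$ arising from the Haar factor $|t|^{-3}$, the two $|t|^{1/2}$-factors in the applications of \eqref{eq: whitn1}, and the $|t|^{-2}$ Jacobian of $t\mapsto t^{-1}$. Since the entire argument of \S\ref{sec: n=1} is heuristic -- ignoring convergence and formally applying Fubini to non-absolutely-convergent iterated integrals -- the ``proof'' also relies on accepting those manipulations, as emphasized in the introduction to this section.
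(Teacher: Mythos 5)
Your Part~(1) is correct and follows essentially the same route as the paper's proof: you write $\whitform(W,\Phi,e,s)$ as a two-variable integral of $W_s$ against a translate of $\Phi$ (your form $\iint W_s(\toUbar(\sm{x}{y}{0}{x})\gamma)\Phi(1-x)\psi(y/2)\,dx\,dy$ is the paper's intermediate expression after the change of variable $(x,y)\leftrightarrow(-y,-x)$), expand $\Mint(\Phi*W_s)$ using the matrix identity $\gamma\levi(\sm{1}{r-1}{}{1})=\toUbar(\sm{1-r}{0}{0}{1-r})\gamma$ together with the $\left(\begin{smallmatrix}1&&&\\&1&&\\&x&1&\\&&&1\end{smallmatrix}\right)$ factor, and match the two.

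Your Part~(2) has the right overall plan, which is exactly what the paper does: substitute for $\tilde W$, decouple the $N'$-integral from the Bruhat cell integral by a translation in the cell parameter $z$, apply \eqref{eq: whitn1} once to each bracket to produce the two $I^{\psi^{\pm1}}$ integrals evaluated at $\sm{t^{-1}}{}{}{1}$, and finish with $t\mapsto t^{-1}$ and the cancellation $\nu_\psi\nu_{\psi^{-1}}=1$. However, there is a concrete error in your Jacobian for the Bruhat parametrization of $N'\bs G'$. Identifying $N'\bs\SL_2$ with $F^2\setminus\{0\}$ via the bottom row, the right-$\SL_2$-invariant measure is Lebesgue $dc\,dd$, and for $g=w'_{U'}m(t)u(z)$ the bottom row is $(-t,-tz)$, so $dc\,dd=|t|\,dt\,dz$; thus $dg\propto|t|\,dt\,dz$ (additive $dt$), equivalently $|t|^2\,d^*t\,dz$ as in the paper's display, \emph{not} $|t|^{-3}\,dt\,dz$. (Your $|t|^{-3}$ looks like the Iwasawa factor $\delta_{B'}(m(t))^{-1}\,dt_{\mathrm{add}}\,dz$ misapplied to the Bruhat decomposition.) This is not a harmless slip: with your stated ingredients, $|t|^{-3}\cdot|t|^{1/2}\cdot|t|^{1/2}=|t|^{-2}$ and then $t\mapsto t^{-1}$ with Jacobian $|t|^{-2}$ yields a net factor $|t|^{0}$, whereas \eqref{one: beforefe} requires $|t|^{-3}$. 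With the correct factor $|t|^2\,d^*t$, one gets $|t|^2\cdot|t|^{1/2}\cdot|t|^{1/2}=|t|^3$, and $t\mapsto t^{-1}$ (Jacobian-free for $d^*t$) then yields the required $|t|^{-3}\,d^*t$. So the plan is right but the exponent you quote would not reproduce \eqref{one: beforefe}.
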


\begin{proof}[``Proof"]
For part \ref{claim: whitfunct} of the ``Claim", recall that for $g\in \operatorname{SL}_2$,
$\whitform(W,\Phi,\animg{g} ,s)$ is
\[
\iint_{F^2} W_s(\left(\begin{smallmatrix}&1&&\\&&&1\\-1&&&\\&&1&\end{smallmatrix}\right)
\left(\begin{smallmatrix}1&y&&x\\&1&&\\&&1&-y\\&&&1\end{smallmatrix}\right) \left(\begin{smallmatrix}1&&\\&g&\\&&1\end{smallmatrix}\right))
\wevinv(\left(\begin{smallmatrix}1&y&&x\\&1&&\\&&1&-y\\&&&1\end{smallmatrix}\right)\animg{g})
\Phi(1)\,dy\ dx.
\]
For $g=e$, using \eqref{eq: weilH1} and \eqref{eq: weilH3} we get
\begin{multline} \label{eq: whitformat1}
\whitform(W,\Phi,e,s)=
\iint_{F^2}W_s(\left(\begin{smallmatrix}&1&&\\&&&1\\-1&&&\\&&1&\end{smallmatrix}\right)
\left(\begin{smallmatrix}1&y&&x\\&1&&\\&&1&-y\\&&&1\end{smallmatrix}\right))
\Phi(1+y)\,dy\ \psi(\frac12 x)^{-1}\ dx\\=
\int_F \Phi*(W_s)(\left(\begin{smallmatrix}1&&&\\&1&&\\&x&1&\\&&&1\end{smallmatrix}\right)
\left(\begin{smallmatrix}&1&&\\&&&1\\-1&&&\\&&1&\end{smallmatrix}\right)
\left(\begin{smallmatrix}1&-1&&\\&1&&\\&&1&1\\&&&{1}\end{smallmatrix}\right))\ \psi(\frac12 x)\, dx,
\end{multline}
which is $\Mint(\Phi *(W_s))$.

Consider now the left-hand side of \eqref{eq: main}.
Using Bruhat decomposition, we get from \eqref{eq: localinner} that
\[
\tilde{J}(\tilde W,W,\Phi,s)=\int_{F^*}\int_F\tilde W(\widetilde{\sm{}1{-1}{}} \widetilde{\sm t{tz}{}{t^{-1}}})
\whitform(W,\Phi,\widetilde{\sm{}1{-1}{}} \widetilde{\sm t{tz}{}{t^{-1}}},s)\abs{t}^2\ dz\ dt.
\]
Thus, the left-hand side of \eqref{eq: main} equals
\[
\int_F\int_{F^*}\int_F\tilde W(\widetilde{\sm{}1{-1}{}}
\widetilde{\sm t{tz}{}{t^{-1}}}\widetilde{\sm{1}{z'}{}{1}})
\whitform(W,\Phi,\widetilde{\sm{}1{-1}{}} \widetilde{\sm
t{tz}{}{t^{-1}}},\frac12)\abs{t}^2\ dz\ dt\ \psi(-\frac12 z')\ dz',
\]
which by a change of variables $z'\mapsto z'-z$ becomes
\[
\int_{F^*}\big(\int_F\tilde W(\widetilde{\sm{}1{-1}{}}
\widetilde{\sm t{tz'}{}{t^{-1}}})\psi(-\frac12 z')\ dz'\big)\big(
\int_F\whitform(W,\Phi,\widetilde{\sm{}1{-1}{}} \widetilde{\sm
t{tz}{}{t^{-1}}},\frac12)\psi(\frac12 z)\ dz\big)\abs{t}^2\ dt.
\]
By our choice of $\tilde W$, the above integration becomes
\begin{multline*}
\int_{F^*}\big(\int_F
\whitformd(M(\frac12)\alt{W},\d{\Phi},\widetilde{\sm{}1{-1}{}} \widetilde{\sm t{tz'}{}{t^{-1}}},-\frac12)\psi(-\frac12 z')\ dz'\big)\\
\big(\int_F\whitform(W,\Phi,\widetilde{\sm{}1{-1}{}}
\widetilde{\sm t{tz}{}{t^{-1}}},\frac12)\psi(\frac12 z)\
dz\big)\abs{t}^2\ dt.
\end{multline*}

Using \eqref{eq: whitn1} twice, we get that the above integration equals
\begin{multline*}
\int_{F^*} \big(\iiint_{F^3}
(\d{\Phi}*M^*\alt{W})(\left(\begin{smallmatrix}t
&&&\\&1&&\\&&1&\\&&&t^{-1}\end{smallmatrix}\right)
\left(\begin{smallmatrix}1&&&\\&1&&\\x&y&1&\\z&x&&1\end{smallmatrix}\right)
\left(\begin{smallmatrix}&&1&\\&&&1\\-1&&&\\&-1&&\end{smallmatrix}\right))\psi(\frac{z-y}{2})\ dx\ dy\ dz\big)\\
\big(\iiint_{F^3}
(\Phi*(W_{\frac12}))(\left(\begin{smallmatrix}t
&&&\\&1&&\\&&1&\\&&&t^{-1}\end{smallmatrix}\right)
\left(\begin{smallmatrix}1&&&\\&1&&\\x'&y'&1&\\z'&x'&&1\end{smallmatrix}\right)
\left(\begin{smallmatrix}&&1&\\&&&1\\-1&&&\\&-1&&\end{smallmatrix}\right))\psi(\frac{y'-z'}{2})\
dx'\ dy'\ dz'\big) \abs{t}^{-3}\ dt.
\end{multline*}
Here the roots of unity $\nu_{\psi}$ and $\nu_{\psi^{-1}}$ cancel each other. This expression is just \eqref{one: beforefe}.
\end{proof}

\subsection{}\label{sec: ridphin=1}
From ``Claim"~\ref{claim: phi}, we are left to show the identity
\[
\int_{F^*} I^{\psi}(\Phi *(W_{\frac12})
;\sm{t}{}{}{1})I^{\psi^{-1}}(\d{\Phi}*
M^*\alt{W};\sm{t}{}{}{1})\abs{t}^{-3}\,dt=\fel \Mint(\Phi*M^*W)\Mintd(\d{\Phi}*M^*\alt{W}).
\]
Since $\Phi* M^*W=M^*(W')$ with $W'_{\frac12}=\Phi *(W_{\frac12})$, it remains to show that
\begin{equation}\label{goal: n=1}
\int_{F^*} I^{\psi}(W_{\frac12};\sm{t}{}{}{1})I^{\psi^{-1}}(M^*\alt{W};\sm{t}{}{}{1})\abs{t}^{-3}\,dt=
\fel \Mint(M^*W)\Mintd(M^*\alt{W})
\end{equation}
for any $(W,\alt{W})\in \Ind(\WhitML(\pi))\times\Ind(\WhitMLd(\pi))$.

\subsection{} \label{sec: applyFEn=1}
The left-hand side of \eqref{goal: n=1} is a seven-dimensional integral.
To compute it, it is advantageous to integrate over $t$ first.
This is because for any $g\in \Sp_2$ the function
\[
W_s(\left(\begin{smallmatrix}m&\\&m^*\end{smallmatrix}\right)g)\abs{\det m}^{-(s+\frac32)}
\]
belongs to $\WhitM(\pi)$ and therefore the integral over $t$ (with the rest of the variables fixed)
has the form
\[
\int_{F^*} W^1(\sm{t}{}{}{1})W^2(\sm{t}{}{}{1})\ dt
\]
where $W^1\in\WhitM(\pi)$ and $W^2\in\WhitMd(\d\pi)$. The key
observation is that the above integral defines a $\GL_2$-invariant
bilinear form on $\WhitM(\pi)\times\WhitMd(\d\pi)$, and thus
\begin{equation}\label{eq: fen=1}
\int_{F^*} W^1(\sm{t}{}{}{1})W^2(\sm{t}{}{}{1})\ dt
=\int_{F^*} W^1(\sm{t}{}{}{1}b) W^2(\sm{t}{}{}{1}b)\ dt
\end{equation}
for any $b\in \GL_2$.
We conclude that the left-hand side of \eqref{goal: n=1} equals
\[
\int_{F^*} I^{\psi}(W_{\frac12};\sm{t}{}{}{1}b)I^{\psi^{-1}}(M^*\alt{W};\sm{t}{}{}{1}b)\abs{t}^{-3}\abs{\det b}^{-3}\,dt
\]
for any $b\in \GL_2$.

\subsection{}\label{sec: chooseb}
To prove \eqref{goal: n=1} we will show the following:

\begin{claim}\label{claim: main}
Let $b=\sm{-\frac12}{\frac12}{1}{1}=\sm{1}{-\frac12}{}{1}\sm{1}{}{1}{1}\sm{}{1}{1}{}$.
Then for any $W\in\Ind(\WhitML(\pi))$ we have
\begin{equation}\label{one: goal}
I^{\psi}(M^*W;\sm{t}{}{}{1}b)=\abs{t}\fel \Mint(M^*W)
\end{equation}
for all $t\in F^*$   and
\begin{equation}\label{one: factor}
\int_{F^*}I^\psi(W_\frac12;\sm{t}{}{}{1}b)\abs{t}^{-2}\ dt=\Mint(M^*W).
\end{equation}
\end{claim}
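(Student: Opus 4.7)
The plan is to verify both identities by direct multi-variable integral manipulation, using the factorization $b=\sm{1}{-\frac12}{}{1}\sm{1}{}{1}{1}\sm{}{1}{1}{}$ to move between unipotent normal forms inside $\Sp_2$, and to extract the factor $\fel=\epsilon(\frac12,\pi,\psi)$ via the local Whittaker functional equation for $\pi$ on $\GL_2$ recalled in \S\ref{S:metan=1}.

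For identity \eqref{one: factor}, I would expand $I^{\psi}(W_{\frac12};\sm{t}{}{}{1}b)$ as a triple integral over $x,y,z$ and push the element $\sm{t}{}{}{1}b$ past the lower-triangular unipotent inside $\Sp_2$, so that $W_{\frac12}$ becomes evaluated on an expression of the form $\levi(\diag(1,-1))\cdot w_U\cdot \toUbar(x',y',z')\cdot(\text{Borel factor})$. After an Iwasawa-type change of variables, the outer $t$-integral (with the weight $|t|^{-2}$) merges with the three inner $I^\psi$-variables to produce exactly the four-dimensional integral defining $M^*W$, followed by the one-variable Whittaker integral of \eqref{eq: defmintone}; in particular, the character $\psi(\frac{y-z}{2})$ together with the half-shift from $\sm{1}{-\frac12}{}{1}$ must collapse to the character $\psi(\frac{x}{2})$. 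No $\epsilon$-factor appears in this step.

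For identity \eqref{one: goal}, I would start from $M^*W$ written as a three-dimensional integral over $\bar U$, yielding in total a six-dimensional integral upon substitution into $I^\psi$. The factor $\sm{}{1}{1}{}$ inside $b$ acts on $M^*W$ as the outer $\GL_2$-Weyl element, and a suitable change of variables in the six integration variables splits the integral as the product of (a twist of) $\Mint(M^*W)$ with a one-variable $\GL_2$-integration of the form
\[
\int_{F^*} W'\!\bigl(\sm{\tau}{}{}{1}\sm{}{1}{1}{}\bigr)\,d^*\tau,\qquad W'\in\WhitM(\pi).
\]
Using the Whittaker functional equation and the fact (\S\ref{S:metan=1}) that $L(s,\pi)$ is holomorphic at $s=\frac12$ and that the regularized torus-invariant functional $\ell$ on $\WhitM(\pi)$ is an eigenvector for $\sm{}{1}{1}{}$ with eigenvalue $\fel$, this contributes precisely the scalar $\fel$, and the Jacobian of the change of variables accounts for the factor $|t|$.

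The main obstacle I anticipate is bookkeeping rather than conceptual: keeping track of (i) the $\frac12$-shifts in the characters $\psi_{\tilde N}$, $\psi_{U'}$ and in the matrix $b$, so that the combined character on the six-fold integral exactly matches $\psi(\frac{y-z}{2})$ twisted appropriately by the character from $\Mint$; (ii) the normalization of $M(\frac12)$ so that the power $|t|^{+1}$, and not $|t|^{-1}$, arises on the right-hand side of \eqref{one: goal}; and (iii) the formal swap of integrations (ignoring convergence), which must still be consistent in order for $\fel$ to appear with the correct sign. Once these normalizations are pinned down, the computation itself is short.
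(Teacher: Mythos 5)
Your outline correctly isolates the one conceptual input — the $\GL_2$ Whittaker functional equation recalled in \S\ref{S:metan=1}, which forces the eigenvalue $\fel=\epsilon(\frac12,\pi,\psi)$ — but the route you propose from the seven-fold integral to that functional equation has a genuine structural gap that is not just bookkeeping.

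The paper's argument does not try to "split" $I^{\psi}(M^*W;\sm{t}{}{}{1}b)$ directly into $\Mint(M^*W)$ times a one-variable $\GL_2$-integral. Instead it introduces the left $H$-invariant functional $L_W(g)=\int_{Y\bs H}W_{\frac12}(hg)\,dh$, where $H\cong\SL_2\times\SL_2$ is the centralizer of $\levi(E)$ in $\Sp_2$, and its translate $L'_W$; the relation $L'_W=\fel L_W$ \eqref{eq: epsilon12} is the $\GL_2$ functional equation repackaged as an eigenvalue statement for a functional, not as the value of a convergent integral. Then Claim~\ref{claim: M12} recovers $M^*W(g)$ as a two-variable integral of $L'_W$ — a nontrivial Fourier-inversion identity converting the three-dimensional $\bar U$-integral defining $M^*W$ into an integral of $L'_W$. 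Only after this does Claim~\ref{one: L1} show, by repeated conjugation through $H$ and Fourier inversion, that both $I^\psi(M^*W;\sm{t}{}{}{1}b)$ and $\Mint(M^*W)$ collapse to the \emph{same} one-variable integral of $L'_W$, resp.\ $L_W$, which is where $\fel$ enters. Your proposal omits $L_W$ entirely, and without its full $H$-invariance (not merely torus-invariance of $\ell$ on $\WhitM(\pi)$) the chain of conjugations and Fourier inversions in the "Proof" of Claim~\ref{one: L1} has no starting point.

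Concretely: (i) writing out $I^\psi(M^*W;\cdot)$ gives a six-fold integral, while $\Mint(M^*W)$ is four-fold and your proposed extra $\GL_2$-integral is one-fold, so the dimension count in your factorization is $6\neq 4+1$; two variables must be eliminated by Fourier inversion, and that mechanism is exactly what Claim~\ref{claim: M12} and the $H$-invariance supply. (ii) The assertion that $\int_{F^*}W'(\sm{\tau}{}{}{1}\sm{}{1}{1}{})\,d^*\tau$ "contributes precisely the scalar $\fel$" confuses a number (the value of an integral depending on $W'$) with the eigenvalue of the regularized functional $\ell$ under $\pi(\wnn)$; one needs a relation between two functionals (as in $L'_W=\fel L_W$), not a single evaluation. (iii) For \eqref{one: factor}, the "merge" of the $t$-integral with the $I^\psi$-variables cannot directly reproduce $\Mint(M^*W)$, because $M^*W$ is defined via $W_{\frac12}(\levi(\spclt)w_U\toUbar(\cdot)\,\cdot)$ whereas the $I^\psi$-integrand in \eqref{eq: I0afterb} has $\sm{b}{}{}{b^*}w_U$ on the other side; matching these requires \eqref{eq: finalmint}, which is again established through $L_W$. (Also, the characters in \eqref{eq: I0afterb} have already absorbed the $\frac12$-shifts from $b$ into $\psi(x)$, so the remaining bookkeeping is not of the form you describe.)
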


In the remaining part of the section we give a formal argument for the ``Claim".
With our choice of $b$, $I^{\psi}(f;\sm{t}{}{}{1}b)$ is equal to
\[
\iiint_{F^3} f(\left(\begin{smallmatrix}t &&&\\&1&&\\&&1&\\&&&t^{-1}\end{smallmatrix}\right)
\left(\begin{smallmatrix}1&&&\\&1&&\\x'&y'&1&\\z'&x'&&1\end{smallmatrix}\right)
\sm b{}{}{b^*}\left(\begin{smallmatrix}&&1&\\&&&1\\-1&&&\\&-1&&\end{smallmatrix}\right))
\\\psi(\frac{y-z}{2})\ dx\ dy\ dz
\]
where $x'=\frac{y-z}2$, $z'=z+y-2x$ and $y'=\frac{z+y+2x}4$.
Making a change of variables, we get
\begin{equation} \label{eq: I0afterb}
I^{\psi}(f;\sm{t}{}{}{1}b)=
\iiint_{F^3} f(\left(\begin{smallmatrix}t &&&\\&1&&\\&&1&\\&&&t^{-1}\end{smallmatrix}\right)
\left(\begin{smallmatrix}1&&&\\&1&&\\ x&y&1&\\z &x&&1\end{smallmatrix}\right)\sm b{}{}{b^*}
\left(\begin{smallmatrix}&&1&\\&&&1\\-1&&&\\&-1&&\end{smallmatrix}\right))\\\psi(x)\ dx\ dy\ dz.
\end{equation}

\subsection{}

We use the fact that the image of $M^*$ admits a nontrivial $H$-invariant linear form,
where $H$ is the centralizer of $\left(\begin{smallmatrix}1&&&\\&-1&&\\&&-1&\\&&&1\end{smallmatrix}\right)$
in $\Sp_2$ i.e.
\[
H=\{\left(\begin{smallmatrix}*&&&*\\&*&*&\\&*&*&\\ *&&&*\end{smallmatrix}\right)\}\cong\SL_2\times\SL_2.
\]
Let $Y$ be the subgroup
\[
\{\left(\begin{smallmatrix}t&&&x\\&t&y&\\&&t^{-1}&\\ &&&t^{-1}\end{smallmatrix}\right):t\in F^*, x,y\in F\}.
\]
Note that in these coordinates the modulus function $\modulus_Y$ of $Y$ is $\abs{t}^4=\abs{\det\sm t{}{}t}^{\frac32+\frac12}$.

For any $W\in\Ind(\WhitML(\pi))$ we have $W_\frac12(yh)=\modulus_Y(y)W_\frac12(h)$. Thus, we can define
\begin{equation}\label{eq: defLWn=1}
L_W(g)=\int_{Y\bs H}W_\frac12(hg)\ dh\\=
\int_{F^*}\iint_{F^2}W_\frac12 (
\left(\begin{smallmatrix}t&&&\\&1&&\\&&1&\\&&&t^{-1}\end{smallmatrix}\right)
\left(\begin{smallmatrix}1&&&\\&1&&\\&x&1&\\z&&&1\end{smallmatrix}\right)g )\ dz\ dx\abs{t}^{-2}\ dt
\end{equation}
and
\[
L'_W(g)=L_W(\left(\begin{smallmatrix}&1&&\\1&&&\\&&&1\\&&1&\end{smallmatrix}\right)g).
\]
Clearly, $L_W$ and $L'_W$ are left $H$-invariant functions. In fact,
\begin{equation} \label{eq: epsilon12}
L'_W=\fel L_W
\end{equation}
where $\fel=\epsilon(\frac12,\pi,\psi)$ because of the local functional equation for $\GL(2)$ (see \S\ref{S:metan=1}).

We can express $M^*W$ in terms of $L_W$ as follows.

\begin{claim} \label{claim: M12}
For any $W\in\Ind(\WhitML(\pi))$,
\[
M^*W(g)=\iint_{F^2} L'_W(\left(\begin{smallmatrix}1&s&r&\\&1&&r\\
&&1&-s\\&&&1\end{smallmatrix}\right)g )\psi(-s)\ dr\ ds.
\]
\end{claim}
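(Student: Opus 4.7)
The plan is to expand the right-hand side by substituting the definitions of $L'_W(g)=L_W(w_1 g)$ and of $L_W$, producing a five-dimensional integral in $t,x,z,s,r$:
\[
\iint \psi(-s) \int_{F^*} \iint W_{\frac12}\bigl(a(t)\,h(x,z)\,w_1\,n(s,r)\,g\bigr)\,\frac{dz\,dx\,dt}{|t|^{2}}\,dr\,ds,
\]
and to reduce it to the three-dimensional integral defining $M^*W(g)$.

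The first step is to compute the matrix product $a(t)\,h(x,z)\,w_1\,n(s,r)$ explicitly as a $4\times 4$ matrix in the parameters. The key observation is that $w_1$, which equals $\diag(w_2,w_2)$ in block form with $w_2=\sm{}{1}{1}{}$, normalizes $H$ by interchanging its two $\SL_2$ factors, and that under this conjugation $n(s,r)=\levi(\sm{1}{s}{}{1})\,\toU(\sm{r}{-sr}{}{r})$ is carried to an element of $\bar N_\Levi\cdot U$. Combined with the standard identities $w_U\,\toU(X)=\toUbar(-X)\,w_U$ and $\levi(\spclt)\,\toUbar(Y)=\toUbar(\spclt^{*}Y\spclt^{-1})\,\levi(\spclt)$, one rewrites the matrix product in the form
\[
a(t)\,h(x,z)\,w_1\,n(s,r) \;=\; n_0\,\cdot\,\levi(\spclt)\,w_U\,\toUbar(x',y',z')
\]
for some $n_0\in N_\Levi$ and some $(x',y',z')$ depending on the integration parameters.

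Applying the left $\psi_{N_\Levi}$-equivariance of $W_{\frac12}$, the factor $\psi_{N_\Levi}(n_0)$ is absorbed into the character $\psi(-s)$ via a suitable change of variable. A further reparametrization $(t,x,z,s,r)\mapsto(x',y',z',u_1,u_2)$, whose Jacobian absorbs the measure $|t|^{-2}$ together with the remaining factors, formally reduces the five-dimensional integration to
\[
\iiint W_{\frac12}\bigl(\levi(\spclt)\,w_U\,\toUbar(x',y',z')\,g\bigr)\,dx'\,dy'\,dz' \;=\; M^{*}W(g),
\]
the integrations over $u_1,u_2$ producing (formally) trivial factors.

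The main obstacle is carrying out this Bruhat-type decomposition explicitly and verifying that the auxiliary variables $u_1,u_2$ truly decouple. This requires careful bookkeeping of how $w_1$ (which normalizes $H$ but is not contained in it) interacts both with the unipotent $n(s,r)\in N$ and with the parametrization $a(t)\,h(x,z)$ of $Y\bs H$, and of how the character $\psi(-s)$ matches precisely the $N_\Levi$-character produced by the decomposition. As in the rest of \S\ref{sec: n=1}, the argument is purely formal; convergence and regularization issues are set aside.
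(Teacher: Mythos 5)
Your overall plan (expand the right-hand side through the definitions of $L'_W$ and $L_W$, manipulate the matrix product, and reduce a five-dimensional integral to the three-dimensional one defining $M^*W$) is the same as the paper's. But the description of the reduction has a genuine gap, and the claimed intermediate decomposition cannot hold as stated.

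The decomposition
\[
a(t)\,h(x,z)\,w_1\,n(s,r) \;=\; n_0\cdot \levi(\spclt)\,w_U\,\toUbar(x',y',z'),\qquad n_0\in N_\Levi,
\]
is impossible: the Levi (in particular the torus) component in a Bruhat-type decomposition is uniquely determined, and the left-hand side carries the genuinely $t$-dependent torus factor $a(t)=\levi(\diag(t,1))$, which cannot be produced by $n_0\levi(\spclt)$ with $n_0$ unipotent and $\spclt$ the fixed element $\diag(1,-1)$. The $t$-dependence simply cannot be eliminated at the level of matrix identities; it has to be removed analytically.

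What actually happens in the paper's proof is the following. First, $L'_W$ is replaced by $L_W(w_U\,\cdot)$ using the left $H$-invariance of $L_W$ and the identity $w_1h'=w_U$ with $h'\in H$; your observation that $w_1$ normalizes $H$ points in this direction but you do not use it to pass to $w_U$. After the change of variables one arrives at
\[
W_{\frac12}\bigl(a(t)\,\bar n(r,x,z)\,\levi(\sm{1}{-s}{}{1})\,w_U\bigr),
\]
one moves $\levi(\sm{1}{-s}{}{1})$ past $\bar n$ and past $a(t)$, producing the combined oscillating factor $\psi(-s-ts)=\psi(-(1+t)s)$. The crucial step, which is missing in your argument, is the Fourier inversion in $s$ and $t$: integrating $\psi(-(1+t)s)$ over $s$ pins $t=-1$, and then $\abs{t}^{-2}$ is \emph{evaluated} at $t=-1$, giving $1$. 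It is not ``absorbed by a Jacobian,'' and the integrations over the extra variables are not ``(formally) trivial'' in the sense you describe --- the $s$- and $t$-integrals are coupled through the oscillatory character and collapse precisely by Fourier inversion. Without this step your plan cannot produce $M^*W(g)$ from the five-dimensional integral.
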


\begin{proof}[``Proof"]
It is enough to consider the case $g=e$. The right-hand side is
\begin{multline*}
\iint_{F^2}L'_W(
\left(\begin{smallmatrix}&&&1\\&&1&\\&-1&&\\-1&&&\end{smallmatrix}\right)
\left(\begin{smallmatrix}1&s&r&\\&1&&r\\&&1&-s\\&&&1\end{smallmatrix}\right))\psi(-s)\ dr\ ds=\int_{F^*}\iiiint_{F^4}\\W_{\frac12}(
\left(\begin{smallmatrix}t&&&\\&1&&\\&&1&\\&&&t^{-1}\end{smallmatrix}\right)
\left(\begin{smallmatrix}1&&&\\&1&&\\&x&1&\\z&&&1\end{smallmatrix}\right)
\left(\begin{smallmatrix}&&1&\\&&&1\\-1&&&\\&-1&&\end{smallmatrix}\right)
\left(\begin{smallmatrix}1&s&r&\\&1&&r\\&&1&-s\\&&&1\end{smallmatrix}\right))\psi(-s)\ dr\ ds\ dz\ dx\abs{t}^{-2}\ dt.
\end{multline*}
Making the change of variables $z\mapsto z-rs$, $r\mapsto -r$ we get
\[
\int_{F^*}\iiiint_{F^4}W_{\frac12}(
\left(\begin{smallmatrix}t&&&\\&1&&\\&&1&\\&&&t^{-1}\end{smallmatrix}\right)
\left(\begin{smallmatrix}1&&&\\&1&&\\r&x&1&\\z&r&&1\end{smallmatrix}\right)
\left(\begin{smallmatrix}1&-s&&\\&1&&\\&&1&s\\&&&1\end{smallmatrix}\right)
\left(\begin{smallmatrix}&&1&\\&&&1\\-1&&&\\&-1&&\end{smallmatrix}\right)
)\psi(-s)\ dr\ ds\ dz\ dx\abs{t}^{-2}\ dt.
\]
By a further change of variables in $x$ and $r$ we get
\begin{multline*}
\int_{F^*}\iiiint_{F^4}W_{\frac12}(
\left(\begin{smallmatrix}t&&&\\&1&&\\&&1&\\&&&t^{-1}\end{smallmatrix}\right)
\left(\begin{smallmatrix}1&-s&&\\&1&&\\&&1&s\\&&&1\end{smallmatrix}\right)
\left(\begin{smallmatrix}1&&&\\&1&&\\r&x&1&\\z&r&&1\end{smallmatrix}\right)
\left(\begin{smallmatrix}&&1&\\&&&1\\-1&&&\\&-1&&\end{smallmatrix}\right)
)\psi(-s)\ dr\ ds\ dz\ dx\abs{t}^{-2}\ dt\\=
\iiiint_{F^4}\int_{F^*}W_{\frac12}(
\left(\begin{smallmatrix}t&&&\\&1&&\\&&1&\\&&&t^{-1}\end{smallmatrix}\right)
\left(\begin{smallmatrix}1&&&\\&1&&\\r&x&1&\\z&r&&1\end{smallmatrix}\right)
\left(\begin{smallmatrix}&&1&\\&&&1\\-1&&&\\&-1&&\end{smallmatrix}\right)
)\psi(-s-ts)\abs{t}^{-2}\ dt\ ds\ dr\ dz\ dx.
\end{multline*}
By Fourier inversion this is
\[
\iiint_{F^3}W_\frac12 (
\left(\begin{smallmatrix}-1&&&\\&1&&\\&&1&\\&&&-1\end{smallmatrix}\right)
\left(\begin{smallmatrix}1&&&\\&1&&\\r&x&1&\\z&r&&1\end{smallmatrix}\right)
\left(\begin{smallmatrix}&&1&\\&&&1\\-1&&&\\&-1&&\end{smallmatrix}\right)
 )\ dr\ dz\ dx=M^*W(e)
\]
as required.
\end{proof}

\subsection{}
Equation \eqref{one: goal} follows from \eqref{eq: epsilon12} and the following:
\begin{claim} \label{one: L1}
For all $t\in F^*$ and $W\in\Ind(\WhitML(\pi))$ we have
\begin{subequations}
\begin{gather}
\label{eq: finalIb}
I^{\psi}(M^*W;\sm{t}{}{}{1}b)=
\abs{t}\int_F L_W' (\left(\begin{smallmatrix}1&&&\\&1&&\\ x&&1&\\ &x&&1\end{smallmatrix}\right)
\sm b{}{}{b^*}\left(\begin{smallmatrix}&&1&\\&&&1\\-1&&&\\&-1&&\end{smallmatrix}\right) )\ \psi(x)\ dx,\\
\label{eq: finalmint}
\Mint(M^*W)=\int_F L_W(\left(\begin{smallmatrix}1&& &\\&1&& \\
x&&1&\\&x&&1\end{smallmatrix}\right)
\sm b{}{}{b^*}
\left(\begin{smallmatrix}&&1&\\&&&1\\-1&&&\\&-1&&\end{smallmatrix}\right))\psi(x) \ dx.
\end{gather}
\end{subequations}
\end{claim}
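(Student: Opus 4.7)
My plan is as follows. Both identities \eqref{eq: finalIb} and \eqref{eq: finalmint} express multi-variable integrals involving $M^*W$ as one-dimensional integrals of the left $H$-invariant function $L_W$ (or its Weyl translate $L_W'$) against a character. I would treat them in parallel using the same toolbox.

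The first step is to substitute Claim~\ref{claim: M12}, which writes $M^*W$ as a two-dimensional integral of $L_W'$ against $\psi(-s)$, into the left-hand sides of both identities. For \eqref{eq: finalIb}, using the formula \eqref{eq: I0afterb} for $I^\psi(f;\sm{t}{}{}{1}b)$ this converts $I^\psi(M^*W;\sm{t}{}{}{1}b)$ into a five-variable integral of $L_W'$ weighted by $\psi(-s+x)$. For \eqref{eq: finalmint}, the same substitution combined with the definitions \eqref{eq: whitformat1}, \eqref{eq: defmintone} of $\Mint$ expresses the left-hand side as a three-variable integral of $L_W'$, which can then be rewritten in terms of $L_W$ via the definition of $L_W'$.

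The second step is to exploit the left $H$-invariance of $L_W'$: any $H$-element appearing on the left of the argument of $L_W'$ can be dropped, and whenever the argument traverses a fiber of the quotient map to $H\bs G$ with a nontrivial character weight, Fourier inversion produces a delta function collapsing that integration direction. Concretely, I would first push the diagonal factor $\diag(t,1,1,t^{-1}) \in H$ to the left past the upper-triangular unipotent from Claim~\ref{claim: M12}, picking up a change of variables in $(s,r)$ that produces $t$-Jacobians (which ultimately should combine to give the $\abs{t}$ on the right-hand side of \eqref{eq: finalIb}). Then I would commute the remaining upper-triangular and lower-triangular unipotents past $\sm{b}{}{}{b^*}$ and the Weyl element, decompose the resulting matrix into its $H$-part (absorbed by invariance) and its complement (whose entries integrate against the character weights to produce deltas), and collapse the integrations accordingly. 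The deltas should force all but one integration variable to vanish, leaving the one-dimensional integral on the right.

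The main obstacle is the combinatorial bookkeeping of these commutators. The specific choice $b=\sm{-1/2}{1/2}{1}{1}$ from Claim~\ref{claim: main}, with its factorization $\sm{1}{-1/2}{}{1}\sm{1}{}{1}{1}\sm{}{1}{1}{}$, is surely engineered precisely so that the phases arising from these commutations align with the explicit weights $\psi(-s+x)$ (for \eqref{eq: finalIb}) and $\psi(x/2)$ in the definition of $\Mint$ (for \eqref{eq: finalmint}). Verifying this alignment—and in particular tracking the factor of $\tfrac12$ in $\psi(x/2)$—is delicate. Since the entire argument of \S\ref{sec: n=1} is heuristic, all Fourier inversions and changes of variables can be applied formally, without convergence concerns.
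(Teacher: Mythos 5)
Your plan is essentially the paper's argument: substitute Claim~\ref{claim: M12}, exploit the left $H$-invariance of $L_W'$, and collapse the extra integration directions by Fourier inversion, with the Jacobian of the conjugation by $\diag(t,1,1,t^{-1})$ accounting for the factor $\abs{t}$ in \eqref{eq: finalIb}. One simplification the paper makes (which eases the commutator bookkeeping you flag as the main obstacle) is to absorb the trailing factor involving $b$ into a right translate of $W$ when proving \eqref{eq: finalIb}, so the Bruhat factorization of $b$ is only exploited for \eqref{eq: finalmint}, via an intermediate identity \eqref{eq: mintone}.
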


\begin{proof}[``Proof"]
By \eqref{eq: I0afterb} we can write the identity \eqref{eq: finalIb} (for a translate of $W$) as
\begin{multline} \label{one: eq1}
\abs{t}^{-1}\iiint_{F^3}
M^*W (\left(\begin{smallmatrix}t &&&\\&1&&\\&&1&\\&&&t^{-1}\end{smallmatrix}\right)
\left(\begin{smallmatrix}1&&&\\&1&&\\ x&y&1&\\z &x&&1\end{smallmatrix}\right) )
\psi(x)\ dx\ dy\ dz
\\=\int_F L_W' (
\left(\begin{smallmatrix}1&&&\\&1&&\\ x&&1&\\ &x&&1\end{smallmatrix}\right)
 )\psi(x)\ dx.
\end{multline}
By ``Claim" \ref{claim: M12} the left-hand side of \eqref{one: eq1} is
\[
\abs{t}^{-1}\idotsint_{F^5} L_W' (\left(\begin{smallmatrix}1&s&r&\\&1&&r\\
&&1&-s\\&&&1\end{smallmatrix}\right)\left(\begin{smallmatrix}t &&&\\&1&&\\&&1&\\&&&t^{-1}\end{smallmatrix}\right)
\left(\begin{smallmatrix}1&&&\\&1&&\\ x&y&1&\\z &x&&1\end{smallmatrix}\right) )\psi(-s+x)\ dx\ dy\ dz\ dr\ ds.
\]
Conjugating $t$ and making a change of variable in $s$ we get
\[
\idotsint_{F^5} L_W' (\left(\begin{smallmatrix}1&s&r/t&\\&1&&r/t\\&&1&-s\\&&&1\end{smallmatrix}\right)
\left(\begin{smallmatrix}1&&&\\&1&&\\ x&y&1&\\z &x&&1\end{smallmatrix}\right) )\psi(-ts+x)\ dx\ dy\ dz\ dr\ ds.
\]
Conjugating $y$, and changing $s$ to $s-yr/t$ we get
\[
\idotsint_{F^5} L_W' (\left(\begin{smallmatrix}1&s&r/t&\\&1&&r/t\\
&&1&-s\\&&&1\end{smallmatrix}\right)
\left(\begin{smallmatrix}1&&&\\&1&&\\ x&&1&\\z &x&&1\end{smallmatrix}\right) )\psi(-ts+yr+x)\ dx\ dy\ dz\ dr\ ds.
\]
Using Fourier inversion for the integrations over $r$ then $y$, we get
\[
\iiint_{F^3} L_W' (\left(\begin{smallmatrix}1&s& &\\&1&& \\ &&1&-s\\&&&1\end{smallmatrix}\right)
\left(\begin{smallmatrix}1&&&\\&1&&\\ x&&1&\\z &x&&1\end{smallmatrix}\right) )\psi(-ts+x)\ dx \ dz \ ds.
\]
Conjugating $\left(\begin{smallmatrix}1&s& &\\&1&& \\ &&1&-s\\&&&1\end{smallmatrix}\right)$ and using the invariance
of $L_W'$ we get
\[
\iiint_{F^3} L_W' (
\left(\begin{smallmatrix}1&&&\\&1&&\\ x-sz&&1&\\ &x-sz&&1\end{smallmatrix}\right)
\left(\begin{smallmatrix}1&s& &\\&1&& \\&&1&-s\\&&&1\end{smallmatrix}\right) )\psi(-ts+x)\ dx \ dz \ ds
\]
which after a change of variables $x\mapsto x+sz$ becomes
\[
\iiint_{F^3} L_W' (
\left(\begin{smallmatrix}1&&&\\&1&&\\ x&&1&\\ &x&&1\end{smallmatrix}\right)
\left(\begin{smallmatrix}1&s& &\\&1&& \\&&1&-s\\&&&1\end{smallmatrix}\right) )\psi(-ts+x+sz)\ dx \ dz \ ds.
\]
Now integrate over $s$ first and then over $z$. By Fourier inversion we get
\[
\int_F L_W' (\left(\begin{smallmatrix}1&&&\\&1&&\\ x&&1&\\ &x&&1\end{smallmatrix}\right) )\psi(x)\ dx
\]
which is the right-hand side of \eqref{one: eq1}. Note that the last expression is independent of $t$.

Next we show \eqref{eq: finalmint}. We first prove the following identity:
\begin{equation}\label{eq: mintone}
\int_FM^*W (
\left(\begin{smallmatrix}1&&&\\&1&&\\&x&1&\\&&&1\end{smallmatrix}\right))\psi(\frac12 x)\ dx
=\int_F L_W'(\left(\begin{smallmatrix}1&x& &\\&1&& \\
&&1&-x\\&&&1\end{smallmatrix}\right)
\left(\begin{smallmatrix}1&&-\frac12 &\\&1&&-\frac12 \\
&&1&\\&&&1\end{smallmatrix}\right))\psi(-x) \ dx .
\end{equation}

Using ``Claim"~\ref{claim: M12} once again, we can write the left-hand side of \eqref{eq: mintone} as
\[
\iiint_{F^3} L_W' (\left(\begin{smallmatrix}1&s&r&\\&1&&r\\
&&1&-s\\&&&1\end{smallmatrix}\right)
\left(\begin{smallmatrix}1&&&\\&1&&\\&x&1&\\&&&1\end{smallmatrix}\right) )\psi(-s+\frac12 x)\ dr\ ds\ dx.
\]
Conjugating $\left(\begin{smallmatrix}1&&&\\&1&&\\&x&1&\\&&&1\end{smallmatrix}\right)\in H$, this becomes
\[
\iiint_{F^3} L_W' (\left(\begin{smallmatrix}1&s+rx&r&\\&1&&r\\
&&1&-s-rx\\&&&1\end{smallmatrix}\right) )\psi(-s+\frac12 x)\ dr\ ds\ dx
\]
which after a change of variables $s\mapsto s-rx$ becomes
\[
\iiint_{F^3} L_W' (\left(\begin{smallmatrix}1&s&r &\\&1&&r \\
&&1&-s\\&&&1\end{smallmatrix}\right) )\psi(-s+rx+\frac12 x)\ dr\ ds\ dx.
\]
Integrating $r$ first and then $x$ and using Fourier inversion, we get
\[
\int_F L_W' (\left(\begin{smallmatrix}1&s&-\frac12 &\\&1&&-\frac12 \\
&&1&-s\\&&&1\end{smallmatrix}\right) )\psi(-s) \ ds=
\int_F L_W'(\left(\begin{smallmatrix}1&s& &\\&1&& \\
&&1&-s\\&&&1\end{smallmatrix}\right)
\left(\begin{smallmatrix}1&&-\frac12 &\\&1&&-\frac12 \\
&&1&\\&&&1\end{smallmatrix}\right))\psi(-s) \ ds
\]
by the $H$-invariance of $L'_W$.

From \eqref{eq: defmintone} and \eqref{eq: mintone}, we  have
\[
\Mint(M^*W)=
\int_F L_W'(\left(\begin{smallmatrix}1&x& &\\&1&& \\&&1&-x\\&&&1\end{smallmatrix}\right)
\left(\begin{smallmatrix}1&&-\frac12 &\\&1&&-\frac12 \\&&1&\\&&&1\end{smallmatrix}\right)
\left(\begin{smallmatrix}&1&&\\&&&1\\-1&&&\\&&1&\end{smallmatrix}\right)
\left(\begin{smallmatrix}1&-1&&\\&1&&\\&&1&1\\&&&{1}\end{smallmatrix}\right) )\psi(-x) \ dx.
\]
By the invariance of $L_W'$ by $\left(\begin{smallmatrix}&&&1\\&1&&\\&&1&\\-1&&&\end{smallmatrix}\right)\in H$ we get
after a change of variable $x\mapsto -x$:
\begin{multline*}
\int_F L_W'(\left(\begin{smallmatrix}1&& &\\&1&& \\x&&1&\\&x&&1\end{smallmatrix}\right)
\left(\begin{smallmatrix}&&&1\\&1&&\\&&1&\\-1&&&\end{smallmatrix}\right)
\left(\begin{smallmatrix}1&&-\frac12 &\\&1&&-\frac12 \\&&1&\\&&&1\end{smallmatrix}\right)
\left(\begin{smallmatrix}&1&&\\&&&1\\-1&&&\\&&1&\end{smallmatrix}\right)
\left(\begin{smallmatrix}1&-1&&\\&1&&\\&&1&1\\&&&{1}\end{smallmatrix}\right) )\psi(x) \ dx\\=
\int_F L_W'(\left(\begin{smallmatrix}1&& &\\&1&& \\
x&&1&\\&x&&1\end{smallmatrix}\right)\left(\begin{smallmatrix}&1&&\\1&&&\\&&&1\\&&1&\end{smallmatrix}\right)
\sm b{}{}{b^*}
\left(\begin{smallmatrix}&&1&\\&&&1\\-1&&&\\&-1&&\end{smallmatrix}\right))\psi(x) \ dx
\\=
\int_F L_W(\left(\begin{smallmatrix}1&& &\\&1&& \\
x&&1&\\&x&&1\end{smallmatrix}\right)
\sm b{}{}{b^*}
\left(\begin{smallmatrix}&&1&\\&&&1\\-1&&&\\&-1&&\end{smallmatrix}\right))\psi(x) \ dx
\end{multline*}
as claimed.
\end{proof}

\subsection{}

Equation \eqref{one: factor} follows from \eqref{eq: finalmint} and the following:
\begin{claim}\label{claim: factorone}
\[\int_{F^*}I^\psi(W_\frac12;\sm{t}{}{}{1}b)\abs{t}^{-2}\ dt=\int L_W (\left(\begin{smallmatrix}1&&&\\&1&&\\ x&&1&\\ &x&&1\end{smallmatrix}\right)
\sm b{}{}{b^*}
\left(\begin{smallmatrix}&&1&\\&&&1\\-1&&&\\&-1&&\end{smallmatrix}\right)
 )\psi(x)\ dx.\]
\end{claim}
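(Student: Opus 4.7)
The proof is a formal application of Fubini combined with a simple matrix identity; since we work formally, no convergence issue arises. The plan is to substitute the expression \eqref{eq: I0afterb} for $I^\psi(W_{\frac12};\sm{t}{}{}{1}b)$ into the left-hand side, factor the lower-triangular unipotent so as to separate the variable $x$ carrying the character $\psi(x)$, and then interchange the order of integration to recognize the inner triple integral as $L_W$.

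First, using \eqref{eq: I0afterb}, the left-hand side becomes the quadruple integral
\[
\int_{F^*}\iiint_{F^3} W_{\frac12}(\left(\begin{smallmatrix}t&&&\\&1&&\\&&1&\\&&&t^{-1}\end{smallmatrix}\right)
\left(\begin{smallmatrix}1&&&\\&1&&\\ x&y&1&\\z&x&&1\end{smallmatrix}\right)\sm{b}{}{}{b^*}
\left(\begin{smallmatrix}&&1&\\&&&1\\-1&&&\\&-1&&\end{smallmatrix}\right))\psi(x)\,dx\,dy\,dz\,\abs{t}^{-2}\,dt.
\]

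Next, since $\toUbar$ is an additive homomorphism from $\symspace_2$, we have the identity
\[
\left(\begin{smallmatrix}1&&&\\&1&&\\ x&y&1&\\z&x&&1\end{smallmatrix}\right)
= \left(\begin{smallmatrix}1&&&\\&1&&\\ &y&1&\\z&&&1\end{smallmatrix}\right)
\left(\begin{smallmatrix}1&&&\\&1&&\\ x&&1&\\ &x&&1\end{smallmatrix}\right),
\]
arising from the decomposition $\sm{x}{y}{z}{x}=\sm{0}{y}{z}{0}+\sm{x}{0}{0}{x}$ in $\symspace_2$. Substituting this in and interchanging the order of integration so as to perform the $(t,y,z)$ integrations first with $x$ held fixed, the inner triple integral matches exactly the definition \eqref{eq: defLWn=1} of $L_W$ (after the cosmetic relabelling of its dummy variable $x$ to $y$), evaluated at $\left(\begin{smallmatrix}1&&&\\&1&&\\ x&&1&\\ &x&&1\end{smallmatrix}\right)\sm{b}{}{}{b^*}\left(\begin{smallmatrix}&&1&\\&&&1\\-1&&&\\&-1&&\end{smallmatrix}\right)$. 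The remaining outer integration $\int_F(\cdots)\psi(x)\,dx$ then yields precisely the right-hand side.

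There is no genuine obstacle in this step: it is pure bookkeeping via Fubini and a unipotent factorization. All of the substantive analytic work was already accomplished in ``Claim'' \ref{one: L1}; ``Claim'' \ref{claim: factorone} merely extracts its counterpart on the $I^\psi$ side, with the $t$-integration now playing the role of the inner averaging over the torus direction hidden in $L_W$.
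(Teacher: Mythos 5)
Your proposal is correct and takes essentially the same approach as the paper: substitute \eqref{eq: I0afterb}, factor $\toUbar(\sm{x}{y}{z}{x})=\toUbar(\sm{0}{y}{z}{0})\toUbar(\sm{x}{0}{0}{x})$ in the abelian group $\bar U$, interchange the order of integration, and recognize the inner $(t,y,z)$-integral as $L_W$. The paper's proof simply says ``by the definition \eqref{eq: defLWn=1} of $L_W$'' and leaves the factorization and Fubini step implicit, which your write-up makes explicit.
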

\begin{proof}[``Proof"]
From \eqref{eq: I0afterb}, the left-hand side is
\[
\int_{F^*}\iiint_{F^3} W_\frac12 (\left(\begin{smallmatrix}t &&&\\&1&&\\&&1&\\&&&t^{-1}\end{smallmatrix}\right)
\left(\begin{smallmatrix}1&&&\\&1&&\\ x&y&1&\\z &x&&1\end{smallmatrix}\right)
\sm b{}{}{b^*}
\left(\begin{smallmatrix}&&1&\\&&&1\\-1&&&\\&-1&&\end{smallmatrix}\right)
 )\psi(x)\abs{t}^{-2}\ dx\ dy\ dz\ dt
\]
which by the definition \eqref{eq: defLWn=1} of $L_W$ equals the right-hand side.
\end{proof}

It seems non-trivial to devise a rigorous proof out of the argument above.
The main problem is that the seven-dimensional integral on the left-hand side of \eqref{goal: n=1}
is not absolutely convergent.
Hence, interchanging the order of integration is delicate (especially the step in \S\ref{sec: applyFEn=1}).
One possible simplification is to take $W$ to be supported in the big cell
(given by the non-vanishing of the left lower $2\times 2$ minor). To determine $c_\pi$
it suffices to consider these $W$, which imposes compact support on some of the variables of
integration. Nonetheless, there are still serious convergence
issues remaining, because we can certainly not assume that
$M(\frac12)\alt{W}$ is supported in the big cell as well.
We will address these issues in a forthcoming paper.
We mention that the pertinent generalization of \eqref{eq: fen=1} to general $n$ was carried out in \cite{LMao4}.

\def\cprime{$'$}
\providecommand{\bysame}{\leavevmode\hbox to3em{\hrulefill}\thinspace}
\providecommand{\MR}{\relax\ifhmode\unskip\space\fi MR }

\end{document}